\newtheorem{thm}{Theorem}[section]
\newtheorem{cor}[thm]{Corollary}
\newtheorem{lem}[thm]{Lemma}
\newtheorem{prop}[thm]{Proposition}
\theoremstyle{definition}
\theoremstyle{remark}
\newtheorem{rem}[thm]{Remark}
\theoremstyle{Conjecture}
\newtheorem{conj}[thm]{Conjecture}
\numberwithin{equation}{section}
\newcommand{\ind}[1]{\mathbf{1}_{\{#1\}}}
\renewcommand{\P}{\mathbb{P}}
\newcommand{\E}{\mathbb{E}}
\newcommand{\R}{\mathbb{R}}
\newcommand{\T}{\mathbb{T}}
\newcommand{\N}{\mathbb{N}}
\newcommand{\I}{\mathscr{I}}
\newcommand{\F}{\mathscr{F}}
\newcommand{\D}{\mathscr{D}}
\begin{document}

\begin{frontmatter}

% "Title of the Paper"
\title{Second  and third orders asymptotic  expansions  for  the distribution of particles in  a  branching random walk with a random environment in time}

\runtitle{Asymptotic  Expansion  for a  Branching Random Walk}

\begin{aug}
% indicate corresponding author with \corref{}
% \author{\fnms{John} \snm{Smith}\thanksref{a}\corref{}\ead[label=e1]{smith@foo.com}\ead[label=e2,url]{www.foo.com}}
% \address[a]{\printead{e1};\printead{e2}}

\author{\fnms{ZhiQiang} \snm{Gao}\thanksref{a}\ead[label=e1]{gaozq@bnu.edu.cn}}
\and
\author{\fnms{Quansheng} \snm{Liu}\thanksref{b,c}\corref{}\ead[label=e2]{quansheng.liu@univ-ubs.fr}}
\address[a]{School of Mathematical Sciences, Laboratory of Mathematics and Complex Systems, Beijing Normal University, Beijing 100875, P. R. China, \printead{e1}}
\address[b]{Corresponding author, Universit\'e de  Bretagne-Sud, CNRS UMR 6205, LMBA, campus de Tohannic, F-{56000} Vannes, France, \printead{e2}}
\address[c]{Changsha University of Science and Technology, School of Mathematics and Statistics, Changsha {410004}, P. R. China }
\runauthor{Z.-Q Gao et Q. Liu}

\affiliation{Beijing Normal University and Universit\'e de Bretagne-Sud}

\end{aug}

\begin{abstract}
Consider  a branching random walk in which the offspring distribution and the moving law  both depend on an independent and identically  distributed random environment indexed by the time.
 %For $A\subset \mathbb{R}$ , let $Z_n(A)$ be the number of particles of generation $n$ located in $A$.
For the normalised  counting measure of  the number of particles of generation $n$ in a given region,  we give  the  second  and third orders  asymptotic  expansions of the central limit theorem under rather weak assumptions on the moments  of the underlying branching  and moving laws. The obtained results and the developed approaches  shed light on higher order expansions. In the proofs,  the Edgeworth expansion of  central limit theorems for sums of independent random variables,  truncating arguments and martingale approximation  play key  roles. In particular, we introduce a new  martingale, show its rate of convergence, as well as the rates of  convergence of some known martingales, which are of independent interest.
\end{abstract}

\begin{keyword}
\kwd{branching random walks}
\kwd{random environment} \kwd{asymptotic expansion}\kwd{central limit theorem}
\kwd{martingale approximation} \kwd{convergence rate}
\end{keyword}

% history:
% \received{\smonth{1} \syear{0000}}

%\tableofcontents

\end{frontmatter}

\section{Introduction}

%Branching random walks (BRW) have been widely studied by many people. It consists of two main ingredients: branching processes and random walks. The model is of great importance and is closely related to many fields such as multiplicative cascades, infinite particle systems, Quicksort algorithms,  random fractals, and Gaussian free fields (see e.g. \cite{Liu98, Liu00, Zeitouni2012}).
A central limit theorem for the branching random walk  has been initiated and conjectured by Harris (1963, \cite[Chapter III. \S 16]{Harris63BP}). Since then this conjecture  has been  proved  in various forms and for various models, see e. g.   \cite{AsmussenKaplan76BRW1,AsmussenKaplan76BRW2,Klebaner82AAP,Biggins90SPA,GLW14,JoffeMoncayo73AM,Stam66,Yoshida08,Nakashima11}.
For the special cases where the underlying motion law is governed by the Wiener process or the  simple symmetric walk, R\'ev\'esz (1994, \cite{Revesz94}) investigated the speed of convergence in the central limit theorem and conjectured the exact  convergence rate,  which was confirmed by Chen (2001, \cite{Chen2001}) and complemented by Gao(2016, \cite{Gao2016}). Kabluchko(2012, \cite{Kabluchko12})
recovered and generalized Chen's results by using a general approach.  Gao and Liu (2016, \cite{GL14})  improved and extended  Chen's results on the branching Wiener process   to the  strongly non-lattice case under much weaker moment conditions. R\'ev\'esz, Rosen and Shi (2005, \cite{ReveszRosenShi2005}) found full  asymptotic expansions in  the local limit theorem for branching Wiener processes,   while  Gr\"{u}bel and Kabluchko (2015, \cite{GK15}) obtained the similar  result for a branching random walk on $ \mathbb{Z}$ and discussed the related applications in random trees. The  exact convergence rate obtained in   \cite{Chen2001,GL14} can be formulated  as the first order asymptotic expansion in the central limit theorem for the models considered therein.  Inspired by these works,   we consider  the following  natural question: what about the   asymptotic expansion of higher orders?

The  aim  of this article is to derive the second and third orders asymptotic expansions in the central limit theorem for a branching random walk with a time-dependent random environment. The goal is  twofold. On the one hand,  although central limit theorems for branching random walks have been well studied  and   the asymptotic expansions for branching Wiener processes  and  lattice branching random walks  were given in \cite{ReveszRosenShi2005} and \cite{GK15}, the asymptotic expansions  in central limit theorems for non-lattice branching random walks are still not known. On the other hand, we shall  perform our research in a more general framework, i.e. for \emph{a branching random walk with a random environment in time}, which is a natural generalization of classical branching random walk formulated in Harris \cite{Harris63BP}.
This model first appeared in Biggins (2004, \cite{BigginsKyprianou04AAP}) as a particular case  of a general framework,   and more related limit theorems  were given  in  \cite{Liu07ICCM,HuangLiangLiu14,WangHuang2016JTP}. For other different  kinds of  branching random walks in random environments, the reader may refer to \cite{BaillonClementGrevenHollander93,BovierHartung2015,GrevenHollander92PTRF,CometsPopov2007AOP,CometsPopov2007ALEA,
FangZeitouni2012,HuYoshida09,Nakashima11,Yoshida08,BirknerGK05,CometsYoshida2011JTP}. For other different aspects on  branching random walks, see \cite{Shi2015} and \cite{Zeitouni2012}.

% For the proofs, we shall further  develop  the approaches  used in \cite{GL14,GLW14}.  %The methods in this paper allow one to handle the higher order asymptotic expansions and we leave that for future work.
 %Following the methods in this paper, we could handle the lattice case and derive higher order asymptotic expansions that will be left for future work.
This article opens the way to obtain   higher order asymptotic expansions.
 The second and third orders expansions given here  serve as good examples.
 The obtained results
 and the developed methods can be used to obtain asymptotic expansions
 of orders $4, 5$, etc., and hint  the general formula  for each finite order expansion, although we have not yet been able to prove it: see Conjecture \ref{conj-finite-order-exp} and the comments following it. We also mention that the approaches in our previous work \cite{GL14} have been significantly developed in the present  article.

The article is organized as follows. %In Section  \ref{EEB-sec1a},  we recall  the definition of  the classical branching random walk and  describe the second-order asymptotic for the distribution of particles  in Theorem \ref{EEB-thm1}.
In Section \ref{EEB-sec2},  after giving  the rigorous definition of the model of a branching random walk with a random environment in time  and   introducing  three martingales,  we formulate the results on convergence rates of martingales as Theorem \ref{EEB-prop}, and then  state the main results on the asymptotic expansions in Theorems \ref{EEB-thm} and \ref{EEB-thm2}. Section  \ref{EEB-sec3} presents  some preliminaries including  a result on  the  Edgeworth expansion for the distribution function of sums of  independent random variables  and  a key decomposition used in the proofs.  Section \ref{EEB-sec4}  is devoted to  the proofs of main results. While the  proof of Theorem \ref{EEB-prop}  is postponed to Section \ref{EEB-sec5}.

\section{Main results}\label{EEB-sec2}

\subsection{Description of the model}
The model   \emph{a branching random walk with a random environment in time} can be formulated as follows \cite{GL14,GLW14}.
Let $(\Theta,\mathbbm{p})$ be a probability space, and $(\Theta^{\mathbb{N}},\mathbbm{p}^{\otimes \mathbb{N}})= (\Omega, \tau)$ be the corresponding product space.   For a sequence $\xi\in \Omega$, we denote $\xi=(\xi_1,\xi_2,\cdots)$, where $\xi_k$ are the $k-$th coordinate function on $\Omega$.  Then $\xi=(\xi_n)$ will serve as an independent and identically distributed environment. Let  $\theta$  be the usual shift transformation
on $\Theta^{\mathbb{N}}$: $\theta(\xi_0,\xi_1,\cdots)= (\xi_1,\xi_2,\cdots) $.
To each realization of $ \xi_n $ correspond  two probability distributions: the offspring distribution $p(\xi_n)
 = (p_0(\xi_n), p_1(\xi_n), \cdots ) $ on  $\N = \{0,1, \cdots\}$, and the moving distribution $ G (\xi_n) $ on $\R$.

Given the environment $ \xi=(\xi_n)$,  the  branching random walk in varying environment evolves according to the following rules:
 \begin{itemize}
   \item  At time $0$, an initial  particle $ \varnothing $ of generation $0$ is located at the origin $S_{\varnothing}=0$;
   \item  At time $1$,   $ \varnothing $ is replaced by $ N= N_{\varnothing}$  new particles of generation 1, and for $ 1\leq i \leq N$, each particle $\varnothing i$  moves to $S_{\varnothing i}=S_{\varnothing}+ L_{  i} $,  where
$N, L_{1}, L_2,  \cdots $ are mutually  independent, $N$ has the law $p(\xi_0)$,  and each $ L_i$ has the law $G(\xi_0)$.
   \item  At time $n+1$, each particle $u=u_1u_2\cdots u_n $ of generation $n$ is   replaced by   $ N_u$ new particles of generation $n+1$, with  displacements $ L_{u1}, L_{u2}, \cdots, L_{uN_u}$. That means  for $1\leq i\leq N_u$,  each particle $u i $ moves to $ S_{ui}= S_u+L_{ui} $, where
$N_u,  L_{u1}, L_{u2}, \cdots  $  are mutually  independent, $N_u$ has the law $ p(\xi_n)$, and each $L_{ui}$  has the same law $G(\xi_n)$. We do not assume the independence between $p(\xi_n)$ and $G(\xi_n)$, $n\geq 0$.
 \end{itemize}
By definition,
  given the environment $\xi$, the random variables $N_u$  and $L_u$, indexed by all the finite sequences $u$ of positive integers, are independent of each other.
%Let $(\Upsilon, {\P}_{\xi})$ be the probability space under which the process is defined when the environment $\xi$ is fixed. As usual, ${\P}_{\xi}$ is called \emph{quenched law}. The total probability space can be formulated as the product space $( \Upsilon\times\Theta^{\mathbb{N}}, \P)$, where $\P = {\P}_{\xi}\otimes \tau$ in the sense that for all measurable and positive function $g$, we have $$\int g d{\P} = \iint g(y,\xi)d{\P}_{\xi}(y)d\tau(\xi)$$
%(recall that $\tau $ is the law of the environment $\xi$). The probability $ \P$ is called \emph{annealed law}. The quenched law  ${\P}_{\xi}$ may be viewed as the conditional probability of the annealed law $\P$ given $\xi$.
%We will use  $  {\E}_{\xi}  $  to denote the expectation with respect to $ {\P}_{\xi} $.
%Other expectations will be denoted simply $\E$ (there will be no confusion according to the context).
For each realization $\xi \in \Theta^\N$ of the environment sequence,
let $(\Gamma, {  \mathcal{G}},  \mathbb{P}_\xi)$ be the probability space on which the
process is defined (when the environment $\xi$ is fixed to the given realization).  The probability
$\mathbb{P}_\xi$ is usually called \emph{quenched law}.
%Let $\tau$ be the law of the environment $\xi\in\Theta^\mathbb N$, where $\mathbb N=\{1,2,\cdots\}$.
The total probability space can be formulated as the product space
$( \Theta^{\mathbb{N}}\times\Gamma , {  \mathcal{E}}^{\N} \otimes  \mathcal{G},   \mathbb{P})$,
 where $ \mathbb{P} = \E  (\delta_\xi \otimes \mathbb{P}_{\xi}) $ with $\delta_\xi $ the Dirac measure at $\xi$ and $\E$ the expectation with respect to the random variable $\xi$, so that  for all measurable and
 positive $g$ defined on $\Theta^{\mathbb{N}}\times\Gamma$, we have
  \[\int_{ \Theta^{\mathbb{N}}\times\Gamma } g (x,y) d\mathbb{P}(x,y) = \E  \int_\Gamma g(\xi,y) d\mathbb{P}_{\xi}(y).\]
The total
probability $\P$ is usually called \emph{annealed law}.
The quenched law $\P_\xi$ may be considered to be the conditional
probability of $\P$ given $\xi$. The expectation with respect to $\mathbb{P}$ will still be denoted by $\E$; there will be no confusion for reason of consistence.   The expectation with respect to
$\P_\xi$ will be denoted by $\E_\xi$.

Let $\mathbb{T}$ be the genealogical tree with $\{N_u\}$ as defining elements. By definition, we have:
(a) $\varnothing\in \mathbb{T}$; (b) $ui \in \mathbb{T}$ implies $u\in \mathbb{T}$; (c) if $ u\in \mathbb{T} $, then $ui\in \mathbb{T} $
if and only if $1\leq i\leq N_u $.
 Let $\mathbb{T}_n =\{u\in
\mathbb{T} :|u|=n\} $ be the set of particles of generation $n$, where $|u|$ denotes the length of the
sequence $u$ and represents the number of generation to which $u$ belongs.
\subsection{The main results}
Let  $Z_n(\cdot)$ be the counting measure of particles of generation $n$:
  for $B \subset \mathbb{R}$,
$$Z_n(B)= \sum_{u\in \mathbb{T}_n}  \mathbf{1}_{ B}(S_u).$$
 Then  $\{ Z_n(\mathbb{R})\}$ constitutes a branching process
in a random environment (see e.g. \cite{AthreyaKarlin71BPRE1,AthreyaKarlin71BPRE2,SmithWilkinson69}).
For $n\geq 0$, let $1_n = (1, \cdots, 1) $ be the sequence of $n$ times $1$,
%vector with $n$ components, all equal to $1$,
with the  convention that $1_0= \emptyset$, and
set  $\widehat{N}_n = N_{1_n} $ (resp.  $\widehat{L}_n = L_{1_{n+1}}$), whose distribution under $\P_\xi$
  is the common one $p(\xi_n)$ (resp. $ G(\xi_n) $ ) of each $N_u$ (resp. $L_{ui}, i\geq 1$) with $|u|=n$,
 % be a random variable with distribution $p(\xi_n)$ (resp. $G(\xi_n)$) under the law $\P_\xi$,
  and  define
\begin{equation*}
   m_n= m(\xi_n)= \E_\xi \widehat{N}_{n}  ,\quad  \Pi_n = m_0\cdots m_{n-1}, \quad   \Pi_0=1.
\end{equation*}
  %Without loss of generality  and for simplicity, we will always  assume that
%\begin{equation}\label{EEB-eq2.2}
%  \P (\widehat{N}_{0}\geq 1)=1.
%\end{equation}
It is well known that
  the normalized sequence  $$W_n=\Pi_n^{-1} Z_n(\mathbb{R}), \quad n\geq 1$$
    constitutes a martingale with respect to the filtration $\mathscr{F}_n$ defined by:  $$ \mathscr{F}_0=\{\emptyset,\Omega \} ,   \mathscr{F}_n =\sigma ( \xi, N_u:|u| < n), \mbox{ for  }n\geq 1. $$
    Throughout the article, we  shall always assume the following conditions:
 \begin{equation}\label{EEB-eq2.1}
    \E \ln m_0>0   \quad { \mbox{and}}\quad  {\E}\left(\frac{1}{m_0} \widehat{N}_{0} \left(\ln^+ \widehat{N}_{0}\right)^{1+\lambda} \right)<\infty   ,
    \end{equation}
where the value of $\lambda >0$ will be specified in the hypothesis of theorems,  and $\ln^+ x= \max(\ln x, 0)$ ( resp.  $ \ln^- x = \max( - \ln x, 0) $ ) denotes the positive (resp. negative) part of $\ln x$ for  $x>0$.
%Under these conditions, the underlying  branching process  is  \emph{supercritical}  and  the number of the particles tends to infinity with positive probability(\cite{AthreyaKarlin71BPRE1,AthreyaKarlin71BPRE2,Tanny1988SPA}).
     It is well known that the  limit
    \begin{equation*}
    W=\lim_n W_n
\end{equation*}
exists almost surely (a.s.) by the martingale convergence theorem, and that,   under \eqref{EEB-eq2.1}, $\E W =1$ and   $W>0$   a.s.  on the explosion event $\{Z_n(\R) \rightarrow \infty\}$ (in fact \eqref{EEB-eq2.1} with $\lambda =0$ suffices for these assertions:  see  \cite{AthreyaKarlin71BPRE2} and \cite{Tanny1988SPA}).
In particular, the underlying  branching process  is  \emph{supercritical} and
$Z_n(\R) \rightarrow \infty$ with positive probability.
%For simplicity and without much loss of generality, we also assume throughout the paper that
%\begin{equation}\label{EEB-N}
% N\geq 1   \qquad \mbox{a.s.}
%\end{equation}
%Hence  $Z_n(\R) \rightarrow \infty$  and $W>0$   a.s.  By this condition,  we can avoid some tedious estimates and  make the proof  easier. As in  \cite{GL14},  the condition \eqref{EEB-N} can be relaxed to  a   moment condition   on $m_0$, see Remarks \ref{rem-EEB-prop} and \ref{rem-EEB-thm-thm2}.
%but we shall not develop this point here.

For $n\geq 0$, define
\begin{eqnarray*}
% \nonumber to remove numbering (before each equation)
   && l_n   = \E_\xi \widehat{L}_n, \quad
    \sigma_n^{(\nu)} =\E_\xi \big(\,\widehat{L}_n- l_n\big)^\nu \; \mbox{ for } \nu\geq 2;\\
    &&
    \ell_n= \sum_{k=0}^{n-1} l_{k},  \quad s_n^{(\nu)} = \sum_{k=0}^{n-1} \sigma_k^{(\nu)} \; \mbox{ for } \nu\geq 2, \quad
 s_n  = \big( s_n^{(2)}\big)^{1/2}.
\end{eqnarray*}
Since $\{\xi_n\} $ are  i.i.d, by the law of large numbers,  we see that
\begin{equation*}
 s_n^{(\nu)} \sim   {n  \E \sigma_0^{(\nu)}},
\end{equation*}
where $a_n\sim b_n $ means $\lim_{n\rightarrow \infty} a_n/b_n=1$. This will be frequently used later.

%As in \cite{GL14}, we    say that  $\widehat{L}_0=\widehat{L}(\xi_0)$ {\emph{satisfies the Cramer condition   with positive probability}}, if  there exists $\Theta_L\subset \Theta$  with $ \mathbbm{p}(\xi_0\in \Theta_L)>0$  such that  when the value of $\xi_0$ is  taken from  $\Theta_L$,$$\lim_{|t|\rightarrow \infty }\Big|\E_\xi e^{it\widehat{L}_0}\Big| <1 . $$
%
%We need   the following  conditions on  motion law of particles:
%\begin{equation}\label{EEB-eq2.3}
%         \widehat{L}_0 \mbox{ satisfies the Cramer condition    with positive probability  and }  \E\big(\,\widehat{L}_0- l_0\big)^{\eta } <\infty  , \end{equation}
%where the value of $\eta>0$ is to  be specified  in the hypothesis of the main theorem.
%\begin{eqnarray}
%% \nonumber to remove numbering (before each equation)
%\label{cbrweq2-3}  && \mbox{, the distribution of $\widehat{L}_n$ are non-lattice under $\P_\xi$  and }  \sup_n \E_\xi \widehat{L}^4 <\infty;   \\
%\label{cbrweq2-4}    && ~~ 0< {\E}\sigma_0^{(2)}<+\infty.
%\end{eqnarray}

%\begin{equation}\label{CRBRW2-3}
%\left.
%  \begin{array}{c}
% \mbox{ For a. e. $\xi$, the distribution of $\widehat{L}_n$ are non-lattice under $\P_\xi$  and $\sup_n \E_\xi \widehat{L}^4 <\infty $} ; \\
%  0< {\E}\sigma_0^{(2)}<+\infty.
%  \end{array}
%\right\}
%\end{equation}

To state our main result, we shall need the following  martingales:
\begin{eqnarray*}
% \nonumber to remove numbering (before each equation)
    & &N_{1,n} = \frac{1}{\Pi_n} \sum_{u\in \T_n} (S_u-\ell_n),   \\
    & & N_{2,n} = \frac{1}{\Pi_n} \sum_{u\in \T_n} \Big[(S_u-\ell_n)^2 -s_n^2 \Big], \\
    && N_{3,n} = \frac{1}{\Pi_n} \sum_{u\in \T_n} \Big[(S_u-\ell_n)^3 -3(S_u-\ell_n)  s_n^2-  s_n^{(3)}   \Big],
\end{eqnarray*}
  with respect to
 the filtration $ (\mathscr{D}_n ) $  defined by $$ \mathscr{D}_0=\{\emptyset,\Omega \}, \quad  \mathscr{D}_n  =  \sigma ( \xi, N_u, L_{ui}:  i\geq 1, |u| <
  n)  \mbox{ for  $n\geq 1$}.$$
\begin{thm}[Convergence rates of the martingales] \label{EEB-prop}
 The sequences $ \{ (N_{\nu,n}, \mathscr{D}_n)\} (\nu=1,2,3)$  are martingales. Moreover, we have the following assertions about their rate of convergence:
   \begin{description}
    \item[(1)]  Assume \eqref{EEB-eq2.1} and $\E (\ln^- m_0)^{1+\lambda}<\infty$   for some $ \lambda>1$,    together with  $     \E\big(| \widehat{L}_0 |^{\eta } \big) <\infty $ for some  $\eta>2$. Then  there exists a real random variable $V_1$  such that a.s. $$ \qquad N_{1,n}-V_1=o(n^{-\lambda+1+\delta} )  \quad  \forall \delta>0. $$
    \item[(2)]  Assume \eqref{EEB-eq2.1}  and $\E (\ln^- m_0)^{1+\lambda}<\infty$ for some $ \lambda>2$,       together with  $     \E\big(| \widehat{L}_0 |^{\eta } \big) <\infty $ for some $\eta>4$. Then  there exists a real random variable $V_2$  such that  a.s. $$ \qquad N_{2,n}-V_2=o(n^{-\lambda+2+\delta} )  \quad  \forall \delta>0. $$
    \item[(3)]  Assume \eqref{EEB-eq2.1} and $\E (\ln^- m_0)^{1+\lambda}<\infty$  for some $ \lambda>3$,  together with $     \E\big(| \widehat{L}_0 |^{\eta } \big) <\infty $ for some $\eta>6$. Then  there exists a real random variable $V_3$  such that  a.s. $$ \qquad N_{3,n}-V_3=o(n^{-\lambda+3+\delta} )  \quad  \forall \delta>0. $$
  \end{description}
 \end{thm}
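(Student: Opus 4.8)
The plan is first to settle the martingale property by a direct conditional computation, and then to obtain the three rates by writing each $N_{\nu,n}-V_\nu$ as a tail series of martingale differences, truncating the heavy-tailed offspring and displacement laws, and converting an $L^p$-bound into an almost sure rate. For the identity $\E_\xi[N_{\nu,n+1}\mid\D_n]=N_{\nu,n}$ I would use that each $S_u$ with $|u|=n$ is $\D_n$-measurable and that, given $\D_n$, the pairs $(N_u,(L_{ui})_i)$ for $u\in\T_n$ are independent with $\E_\xi N_u=m_n$ and increments $L_{ui}-l_n$ centered with moments $\sigma_n^{(\nu)}$. Writing $P_{\nu,n}$ for the bracketed degree-$\nu$ polynomial in $S_u$, the branching property then gives
\[
\E_\xi\Big[\sum_{i=1}^{N_u}P_{\nu,n+1}(S_{ui})\,\Big|\,\D_n\Big]=m_n\,P_{\nu,n}(S_u),
\]
which is a binomial expansion that collapses thanks to the additive relations $\ell_{n+1}=\ell_n+l_n$, $s_{n+1}^{(2)}=s_n^{(2)}+\sigma_n^{(2)}$ and $s_{n+1}^{(3)}=s_n^{(3)}+\sigma_n^{(3)}$; the $P_{\nu,n}$ are precisely the Appell (martingale) polynomials attached to the random-environment random walk. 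Dividing by $\Pi_{n+1}=\Pi_nm_n$ and summing over $\T_n$ yields the martingale property.

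\textbf{From martingale differences to a rate.}
I would next work with the differences $D_{\nu,k}=N_{\nu,k+1}-N_{\nu,k}$, which by the same computation take the conditionally centered, conditionally independent form $D_{\nu,k}=\Pi_{k+1}^{-1}\sum_{u\in\T_k}R_{\nu,k,u}$, where $R_{\nu,k,u}=\sum_{i=1}^{N_u}P_{\nu,k+1}(S_{ui})-m_kP_{\nu,k}(S_u)$. Granting $L^p$-convergence of $N_{\nu,n}$ to its limit $V_\nu$ for some $p\in(1,2]$, one has $N_{\nu,n}-V_\nu=-\sum_{k\ge n}D_{\nu,k}$ and, by the martingale $L^p$-inequality, $\E|N_{\nu,n}-V_\nu|^p\le C\sum_{k\ge n}\E|D_{\nu,k}|^p$. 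The target a.s. rate $o(n^{-\lambda+\nu+\delta})$ then follows, by Borel--Cantelli along a geometric subsequence together with Doob's maximal inequality to fill the gaps (letting the subsequence ratio tend to $1$ to recover every $\delta>0$), once one proves the per-step bound $\E|D_{\nu,k}|^p\lesssim k^{-p(\lambda-\nu)-1+\varepsilon}$.

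\textbf{Truncation and the $L^p$-bound.}
Since the offspring law carries only the logarithmic moment in \eqref{EEB-eq2.1} and the displacement only an $\eta$-th moment, the difficulty sits in rare large values of $N_u$ and $|L_{ui}|$, which I would isolate by truncation. Splitting each $N_u$ and each $S_{ui}$ at thresholds growing essentially like a power of $\Pi_{k+1}$ (hence exponentially in $k$), I would use $\E[m_0^{-1}\widehat N_0(\ln^+\widehat N_0)^{1+\lambda}]<\infty$, $\E|\widehat L_0|^\eta<\infty$ and $\E|\T_k|=\Pi_k$ to show, by Borel--Cantelli, that almost surely for all large $k$ no particle of generation $k$ exceeds the thresholds; matching the threshold to the logarithmic exponent $1+\lambda$ and the tail exponent $\eta>2\nu$ is exactly what produces the polynomial factor $k^{-(\lambda-\nu)}$. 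On the truncated event, conditional independence across $u\in\T_k$ with a von Bahr--Esseen (or Burkholder--Rosenthal) inequality reduces $\E|D_{\nu,k}|^p$ to a sum of per-particle moments $\E_\xi[|R_{\nu,k,u}|^p\mid\D_k]$; here the typical scale $P_{\nu,k}(S_u)\asymp k^{\nu/2}$ (through $s_k^{(2)},s_k^{(3)}\sim k$), the control $\E|\widehat L_0|^{2\nu}<\infty$, and the rate of convergence of the fundamental martingale $W_k\to W$ (which governs the order $\Pi_k$ of the particle count, and is established under the same moment hypotheses) combine to deliver the required decay, while $\E(\ln^-m_0)^{1+\lambda}<\infty$ controls $\Pi_k^{-1}$ and the fluctuations of $\log\Pi_k$.

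\textbf{Main obstacle.}
The routine parts are the martingale identity and the passage from $L^p$-decay to an a.s. rate. The genuine work is the simultaneous, sharp bookkeeping of exponents in the truncation step: the truncation levels and the exponent $p\in(1,2]$ must be chosen so that the discarded tail is both negligible at order $n^{-\lambda+\nu+\delta}$ and Borel--Cantelli summable, while the truncated $L^p$-bound yields exactly $k^{-p(\lambda-\nu)-1+\varepsilon}$. Balancing the three ingredients — the logarithmic-moment exponent $\lambda$ of the offspring, the polynomial growth $k^{\nu/2}$ coming from $(S_u-\ell_k)^\nu$ together with its $s_k^{(2)},s_k^{(3)}$ corrections, and the displacement moment exponent $\eta>2\nu$ — so that the final exponent is precisely $-\lambda+\nu$ is where essentially all the effort concentrates, and it is also what forces the three increasing moment regimes $\lambda>\nu$, $\eta>2\nu$ in the three parts.
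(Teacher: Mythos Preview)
Your outline shares the paper's two structural ingredients---write $N_{\nu,n}-V_\nu$ as a tail series of martingale differences and truncate the offspring---but the mechanism you propose for turning this into an a.s.\ rate differs from the paper's and contains a gap.

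The paper does \emph{not} try to establish $L^p$-convergence of $N_{\nu,n}$ for any $p>1$, and under the hypotheses this is generally not available: condition \eqref{EEB-eq2.1} only gives an $L\log^{1+\lambda}L$-moment on the offspring, so already $W_n$ need not be $L^p$-bounded for $p>1$, and your line ``granting $L^p$-convergence of $N_{\nu,n}$ \ldots'' together with $\E|N_{\nu,n}-V_\nu|^p\le C\sum_{k\ge n}\E|D_{\nu,k}|^p$ cannot be the backbone of the argument. Instead of an $L^p$ rate plus Borel--Cantelli along geometric subsequences, the paper uses Asmussen's lemma (if $\alpha_n\nearrow\infty$ and $\sum_n\alpha_n\beta_n$ converges then $\sum_{k\ge n}\beta_k=o(\alpha_n^{-1})$) with $\alpha_n=n^{\lambda-\nu-\delta}$ and $\beta_n=\textsc{I}_n:=N_{\nu,n+1}-N_{\nu,n}$. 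So the whole proof reduces to showing the a.s.\ convergence of the single weighted series $\sum_n n^{\lambda-\nu-\delta}\textsc{I}_n$, which is split as
\[
\sum_n n^{\lambda_\delta}(\textsc{I}_n-\textsc{I}_n')\;+\;\sum_n n^{\lambda_\delta}(\textsc{I}_n'-\E_{\xi,n}\textsc{I}_n')\;+\;\sum_n n^{\lambda_\delta}\,\E_{\xi,n}\textsc{I}_n',
\]
where $\textsc{I}_n'$ is obtained by inserting the indicator $\mathbf{1}_{\{N_u\le \Pi_n n^{-\lambda_\delta}\}}$ into each summand. The first and third series are handled in $L^1$ directly from the $L\log^{1+\lambda}L$ hypothesis and $\E(\ln^-m_0)^{1+\lambda}<\infty$; the middle series is an $L^2$-bounded martingale because after truncation the $(N_u/m_n)^2$ factor is controlled, again by the log-moment, via the monotonicity of $x(\ln x)^{-1-\lambda}$. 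This is where the precise exponent $-\lambda+\nu$ appears, and it falls out of the choice of truncation level $\Pi_n n^{-\lambda_\delta}$ rather than from balancing a von Bahr--Esseen exponent $p$.

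Two further differences: the paper truncates only the offspring $N_u$, never the displacements. The condition $\eta>2\nu$ is used solely through the deterministic moment bound $\E_\xi|X_u|^2\le K_\xi n^{\nu}$ (for $X_u=P_{\nu,n}(S_u)$), obtained from the $L^r$-inequality for sums of independent centered variables; no Borel--Cantelli on $|L_{ui}|$ is needed. And the convergence rate of $W_n$ (your ``rate of convergence of the fundamental martingale'') plays no role here---the argument is purely a weighted-series convergence, and the result for $W_n$ (Lemma~\ref{EEB-lem5}) is invoked only later in Lemma~\ref{EEB-lem3}.
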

 The proof is postponed to Section \ref{EEB-sec5}.

\begin{rem} \label{rem-EEB-prop}
  A weaker version of parts (1) and (2)  has been proved in  \cite[Propositions 2.1 and 2.2]{GL14}, where the convergence of the martingales $(N_{1,n})$  and $(N_{2,n})$ was shown  under the same conditions.
 % except that  condition \eqref{EEB-N} is relaxed  to  $\E (\ln^- m_0)^{1+\lambda}<\infty$ for some $\lambda >0$,  where $ \ln^- x = \max( - \ln x, 0) $ denotes the negative part of $\ln x$ for  $x>0$.
 % Actually, Theorem \ref{EEB-prop} still holds  when  \eqref{EEB-N} is replaced by the moment condition $\E (\ln^- m_0)^{1+\lambda}<\infty$ mentioned above, and the proof only needs a few changes, by means of the techniques used in the  proofs  of  \cite[Propositions 2.1 and 2.2]{GL14} where this condition is concerned.
  The martingale
$(N_{3,n})$ appears for the first time in this article.
\end{rem}

%Due to \cite[Proposition 2.1 and 2.2]{GL14},      they are martingales with respect to
% the filtration $ (\mathscr{D}_n ) $  defined by $$ \mathscr{D}_0=\{\emptyset,\Omega \}, \quad  \mathscr{D}_n  =  \sigma ( \xi, N_u, L_{ui}:  i\geq 1, |u| <
%  n),  \mbox{ for  $n\geq 1$};$$ and  if the conditions \eqref{EEB-eq2.1}  and  \eqref{EEB-eq2.3}  hold for $\lambda>2,\eta >4$, then
%  these two martingales converges a.s. :
% \begin{equation}\label{EEB-eq2.4}
% V_1:= \displaystyle \lim_{n\rightarrow\infty}N_{1,n}  \mbox{  and  }      V_2 := \displaystyle \lim_{n\rightarrow\infty}N_{2,n}  \mbox{ exist   a.s. in  } \R.
%\end{equation}

%Then $ \{V_n\}$ is a martingale with respect to the filtration $\D_n$.
 %and under the conditions \eqref{cbrweq1},\eqref{cbrweq2} and  (H1) (H2), it converges a.s. Let $V=\lim_{n} V_n$ be the a.s. limit of $V_n$.  (To be proved in Section \ref{sec3}).
For asymptotic expansions of the central limit theorem, we  will need   the following hypotheses on the motion law $G(\xi_0)$ of   particles:
\begin{equation}\label{EEB-eq2.3}
      \P \Big( \limsup_{|t|\rightarrow \infty }\big|\E_\xi e^{it\widehat{L}_0}\big| <1 \Big) >0   \quad \mbox{  and } \quad   \E\big(| \widehat{L}_0 |^{\eta } \big) <\infty  , \end{equation}
where the value of $\eta>1 $ will  be specified  in  the theorems.  The first hypothesis  means that Cram\'er's condition about the  characteristic function of $G(\xi_0)$  holds with positive probability.
Set  $$ Z_n(t)=Z_n((-\infty, t]), \quad  \phi(t)=\frac{1}{\sqrt{2\pi}}e^{-t^2/2}, \quad \Phi(t) = \int_{-\infty}^{t}\phi(x) \mathrm{d}x,  \quad t\in \R .$$
Denote by $H_m(\cdot)$   the Chebyshev-Hermite polynomial of degree $m$:
\begin{equation*}
   H_{m}(x)=m! \sum_{k=0}^{\lfloor \frac{m}{2}\rfloor} \frac{(-1)^k x^{m-2k}}{ k!(m-2k)! 2^k},
\end{equation*}
where $\lfloor x\rfloor$  denotes the largest integer not bigger than $x$. More precisely, we need the following polynomials:
\begin{eqnarray*}
% \nonumber to remove numbering (before each equation)
   & &  H_0(x)=1, \quad  H_1(x) =x,  \quad   H_2(x)=x^2-1,   \quad  H_3(x)= x^3-3x,  \\
    & & H_4(x)= x^4-6x^2+3,    \quad H_5 (x)= x^5-10x^3+15x.
\\
  && H_6(x)= x^6-15x^4+45x^2-15,
%    &&   H_7(x)= x^7-21x^5+105x^3-105x,
  \quad   H_8(x)= x^8-28x^6+210x^4-420x^2+105.
\end{eqnarray*}

In \cite[Theorem 2.3]{GL14}, the authors proved the following result about the exact rate of convergence in the central limit theorem:
   if  $\E m_0^{-\delta}<\infty$ for some $\delta>0$, \eqref{EEB-eq2.1} holds  for some $\lambda>8$ and  \eqref{EEB-eq2.3}  holds  for some $ \eta>12$,     then  for all $ t\in \R$,
 \begin{equation*}
   \sqrt{n}\Big[\frac{1}{\Pi_n} Z_n(\ell_n+s_n t)  - \Phi(t) W\Big] \xrightarrow{n \rightarrow \infty }  - \frac{ \phi(t)\; V_1 }{  (\E \sigma_0^{(2)})^{1/2} }   -\frac{(\E \sigma_0^{(3)}) \,  H_2(t)\; \phi(t) \; W }{6 \big(\E \sigma_0^{(2)}\big)^{3/2} } \quad \mbox {a.s.},
 \end{equation*}
From this result  we  can deduce  the following version describing the first order expansion in the central limit theorem: for $t\in \R$, { as $n \rightarrow \infty$, }
 \begin{equation}\label{spa-cbrweq4}
    \frac{1}{\Pi_n} Z_n(\ell_n+s_n t)=\bigg(\Phi(t) +\frac{  Q_{1,n}(t)}{n^{1/2}} \bigg)W +\Big(-\frac{1}{s_n}\Big)\phi(t)V_1 + o\Big(\frac{1}{\sqrt{n}}\Big)   \mbox{ a.s.},
 \end{equation}
 where
 \begin{equation}\label{EEB-eq2.6}
   \frac{  Q_{1,n}(t)}{n^{1/2}}  = -  \frac{s_n^{(3)}}{6s_n^3 } H_2(t) \phi(t).
 \end{equation}
In this article, we are interested in higher order expansions.
Our main results are the following two theorems about the second and third orders expansions in the central limit theorem. Naturally, for a higher order expansion, we need higher order moment conditions.
\begin{thm}[Second order expansion] \label{EEB-thm}
Assume  $\E m_0^{-\delta}<\infty$ for some $\delta>0$,  \eqref{EEB-eq2.1} for some $\lambda>18$ and \eqref{EEB-eq2.3} for some $ \eta>24$.   Then
  for $t\in \R$, { as $n \rightarrow \infty$, }
  \begin{multline}\label{EEB-eq2.5}
   \frac{1}{\Pi_n}  Z_n( \ell_n + s_n t ) =\Bigg (\Phi(t)+\frac{  Q_{1,n}(t)}{n^{1/2}} +\frac{  Q_{2,n}(t)}{n} \Bigg) W\\ +\Big(-\frac{1}{s_n}\Big)\Bigg( \phi(t)+ \frac{Q'_{1,n}(t)}{n^{1/2}}\Bigg)V_1  +\frac{1}{2!}\frac{1}{s_n^2} \phi'(t)V_2   +o\big(\frac{1}{n}\big)  \mbox{~~ a.s. },
  \end{multline}
%  \begin{equation}\label{EEB-eq2.5}
%   n \Bigg[ \frac{1}{\Pi_n}  Z_n( \ell_n + s_n t ) - \Phi(t) W- \frac{s_n^{(3)}}{6s_n^3 } (1-t^2) \phi(t) W+ \frac{1}{s_n} \phi(t) V_1\Bigg]  \xrightarrow{n \rightarrow \infty }  \mathcal{R}(t)   \mbox{~~ a.s. },
%  \end{equation}
 \mbox{ where } $Q_{1,n}$ is defined   by \eqref{EEB-eq2.6}  and
 \begin{eqnarray}
 % \nonumber to remove numbering (before each equation)
 %\label{EEB-eq2.6}   \frac{  Q_{1,n}(t)}{n^{1/2}} &=&-  \frac{s_n^{(3)}}{6s_n^3 } H_2(t) \phi(t),  \\
 \label{EEB-eq2.7}
    \frac{  Q_{2,n}(t)}{n}  &=&  -\frac{(s_n^{(3)})^{2}}{72s_n^{6}}H_5(t)\phi(t)-\frac{1}{24s_n^{4}}\sum_{j=0}^{n-1}\Big (\sigma_j^{(4)} -3 \big(\sigma_j ^{(2)}\big) ^2\Big)H_3(t)\phi(t).
 \end{eqnarray}
% \begin{multline}\label{EEB-eq2.6 }
%    \mathcal{R}_n(t) = n\Bigg[-\frac{1}{2s_n^2}H_1(t)\phi(t)V_2 -\frac{ s_n^{(3)}}{6s_n^{4}}H_3(t) \phi(t) V_1 - \\
%     \Bigg(\frac{(s_n^{(3)})^{2}}{72s_n^{6}}H_5(t)+\frac{\sum_{j=1}^n\Big (\sigma_j^{(4)} -3 \big(\sigma_j ^{(2)}\big) ^2\Big)}{24s_n^{4}}H_3(t)\Bigg)\phi(t)W\Bigg].
% \end{multline}
  %\begin{equation*}
%  \mathcal{R}_n(t) = \frac{ 1}{2s_n^2}H_1(t)\phi(t)V_2 -\frac{ s_n^{(3)}}{6s_n^{4}}H_3(t) \phi(t) V_1 - Q_n(t)\phi(t)W,
%% \mathcal{R}(t) = \frac{H_1(t)\phi(t)V_2 }{2\E \sigma_0^{(2)}} -\frac{\E \sigma_0^{(3)} H_3(t) \phi(t) V_1}{6(\E \sigma_0^{(2)})^{2}} - Q(t)\phi(t)W,
% \end{equation*}
% with $$ Q_n(t)=  \frac{(s_n^{(3)})^{2}H_5(t)}{72s_n^{6}}+\frac{\sum_{j=1}^n\Big (\sigma_j^{(4)} -3 \big(\sigma_j ^{(2)}\big) ^2\Big)H_3(t)}{24s_n^{4}} .$$
 %$$ Q(t)=  \frac{(\E \sigma_0^{(3)})^{2}H_5(t)}{72(\E \sigma_0^{(2)})^{3}}+\frac{\E (\sigma_0^{(4)} -3\big(\sigma_0^{(2)}\big)^2)H_3(t)}{24(\E \sigma_0^{(2)})^{2}} .$$
\end{thm}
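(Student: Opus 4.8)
The plan is to pass to the empirical characteristic function and to exploit the fact that $W_n$ together with the three martingales of Theorem~\ref{EEB-prop} are precisely the Taylor coefficients, in the Fourier variable, of a single martingale. Writing $X_u=S_u-\ell_n$, I would introduce, for fixed real $\lambda$,
$$\mathcal{M}_n(\lambda)=\frac{1}{\Pi_n\,\Lambda_n(\lambda)}\sum_{u\in\T_n}e^{i\lambda X_u},\qquad \Lambda_n(\lambda)=\prod_{k=0}^{n-1}\E_\xi e^{i\lambda(\widehat{L}_k-l_k)},$$
and check, by the computation that yields the martingale property of $W_n$, that $(\mathcal{M}_n(\lambda),\mathscr{D}_n)$ is a martingale with $\mathcal{M}_n(0)=W_n$. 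Expanding $e^{i\lambda X_u}$ and the normalizing denominator $\Lambda_n$ in powers of $\lambda$ and matching coefficients gives the algebraic identity
$$\mathcal{M}_n(\lambda)=W_n+i\lambda\,N_{1,n}-\tfrac{\lambda^2}{2}N_{2,n}-\tfrac{i\lambda^3}{6}N_{3,n}+\cdots,$$
so that dividing by $\Lambda_n$ is exactly what turns the raw power sums $\Pi_n^{-1}\sum_u X_u^m$ into the centred martingales $N_{m,n}$. Since $\Pi_n^{-1}\sum_u e^{i\lambda X_u/s_n}=\Lambda_n(\lambda/s_n)\,\mathcal{M}_n(\lambda/s_n)$ is the Fourier transform at $\lambda$ of the rescaled empirical measure $\Pi_n^{-1}\sum_u\delta_{X_u/s_n}$, whose distribution function is $\Pi_n^{-1}Z_n(\ell_n+s_nt)$, the whole problem reduces to expanding the two factors $\Lambda_n$ and $\mathcal{M}_n$ at frequency $\lambda/s_n$ and inverting the transform.

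For the first factor I would invoke the classical Edgeworth expansion for sums of independent, non-identically distributed variables (Section~\ref{EEB-sec3}) applied to the quenched law of $X_u/s_n$. Since $s_n^2\sim n\,\E\sigma_0^{(2)}$, the cumulant expansion reads
$$\log\Lambda_n(\lambda/s_n)=-\tfrac{\lambda^2}{2}-\tfrac{i\lambda^3 s_n^{(3)}}{6 s_n^3}+\tfrac{\lambda^4}{24 s_n^4}\sum_{j=0}^{n-1}\big(\sigma_j^{(4)}-3(\sigma_j^{(2)})^2\big)+O(n^{-3/2}),$$
and exponentiating produces, besides $e^{-\lambda^2/2}$, exactly the monomials $\lambda^3$ (order $n^{-1/2}$) and $\lambda^4,\lambda^6$ (order $n^{-1}$) whose inverse transforms, once integrated into a distribution function, are $H_2\phi$, $H_3\phi$ and $H_5\phi$; this recovers the deterministic polynomials $Q_{1,n}$, $Q_{2,n}$ of \eqref{EEB-eq2.6}--\eqref{EEB-eq2.7} multiplying the leading coefficient $W_n\to W$. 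For the second factor I would use the Taylor identity above: the term $i(\lambda/s_n)N_{1,n}$, multiplied by $e^{-\lambda^2/2}$ and by its cubic Edgeworth correction, inverts to $(-1/s_n)\big(\phi+Q'_{1,n}/\sqrt n\big)V_1$, while $-\tfrac{1}{2}(\lambda/s_n)^2 N_{2,n}$ inverts to $\tfrac{1}{2 s_n^2}\phi'V_2$. The convergences $N_{1,n}\to V_1$, $N_{2,n}\to V_2$ and $W_n\to W$, with the quantitative rates of Theorem~\ref{EEB-prop}, let me replace $N_{\nu,n}$ by $V_\nu$ at a cost $o(1/n)$ (for instance $s_n^{-1}(N_{1,n}-V_1)=o(n^{-1})$ since $\lambda>18$). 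The cubic coefficient $N_{3,n}$ enters only at order $s_n^{-3}\sim n^{-3/2}=o(1/n)$ and is absorbed into the remainder — which is why it is needed for Theorem~\ref{EEB-thm2} but not here.

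The genuine difficulty is the rigorous Fourier inversion: transferring these frequency-domain expansions to $\Pi_n^{-1}Z_n(\ell_n+s_nt)$ down to an error $o(1/n)$, uniformly enough to hold almost surely and simultaneously in $t$. I would use an Esseen-type smoothing inequality and split the inversion integral into a low-frequency band $|\lambda|\le\varepsilon\sqrt n$, where the Taylor and Edgeworth expansions are valid and produce the stated terms, and a high-frequency tail that must be shown negligible. Controlling the tail is where Cram\'er's condition \eqref{EEB-eq2.3} is essential: it forces $|\Lambda_n(\lambda/s_n)|$ to be exponentially small on a positive-probability event, and this must be combined with an $L^2$ bound on the complex martingale $\mathcal{M}_n(\lambda/s_n)$ that is uniform over the frequency range.

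Because only polynomial moments of $\widehat{L}_0$ and $\widehat{N}_0$ are assumed, each of these estimates has to be run through a truncation of the displacements and of the offspring, and it is the accumulation of truncation losses — together with the requirement that the remainder beat $n^{-1}$ with room to spare and that $W_n\to W$ faster than $n^{-1}$ — that forces the strong hypotheses $\lambda>18$ and $\eta>24$. I expect this high-frequency and truncation bookkeeping, rather than the algebra of the main terms, to be the principal obstacle; the structure of the argument otherwise parallels and extends the first-order expansion \eqref{spa-cbrweq4}, now carrying one further order in both the deterministic Edgeworth factor and the martingale factor.
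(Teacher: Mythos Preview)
Your Fourier-side strategy is coherent and the algebra identifying $W_n,N_{1,n},N_{2,n}$ as the Taylor coefficients of $\mathcal{M}_n(\lambda)$ is correct and illuminating; it is close in spirit to Kabluchko's approach. It is, however, a genuinely different route from the paper's, and it has one real gap under the stated hypotheses.

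\medskip
\textbf{The gap.} You write that the high-frequency tail will be controlled by Cram\'er's condition on $\Lambda_n$ ``combined with an $L^2$ bound on the complex martingale $\mathcal{M}_n(\lambda/s_n)$''. But assumption \eqref{EEB-eq2.1} gives only an $L(\log L)^{1+\lambda}$-type moment on the offspring variable $\widehat N_0$; you do \emph{not} have $\E_\xi W_n^2<\infty$, and hence no $L^2$ bound on $\mathcal{M}_n(\lambda)$ (your phrase ``polynomial moments of $\widehat N_0$'' is a misreading of \eqref{EEB-eq2.1}). Even after truncating $N_u$, to get an \emph{almost sure} remainder $o(1/n)$ you need summable control, uniformly over a frequency window growing like a power of $n$; I do not see how your outline produces that without second moments, and this is exactly the obstruction that the analytic-martingale method runs into at this level of generality.

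\medskip
\textbf{What the paper does instead.} The paper never inverts a Fourier transform of the random measure. It uses a two-time-scale decomposition: fix $k_n=\lfloor n^\beta\rfloor$ with $\max\{3/\lambda,4/\eta\}<\beta<1/6$, and write
\[
\frac{1}{\Pi_n}Z_n(s_nt)=\mathbb{A}_n+\mathbb{B}_n,
\]
where $\mathbb{B}_n$ is the $\mathscr{D}_{k_n}$-conditional expectation and $\mathbb{A}_n$ the fluctuation. The point is that, conditionally on $\mathscr{D}_{k_n}$, $\E_{\xi,k_n}W_{n-k_n}(u,s_nt-S_u)$ is a \emph{bona fide} distribution function of a sum of independent (quenched) variables, so Lemma~\ref{lem-Edge-exp} applies to it directly on the real line; a Taylor expansion in $S_u/s_n$ then produces the truncated martingales $\overline W_{k_n},\overline N_{1,k_n},\overline N_{2,k_n}$, which are replaced by $W,V_1,V_2$ via Theorem~\ref{EEB-prop}. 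The fluctuation $\mathbb{A}_n$ is shown to be $o(1/n)$ a.s.\ by a conditional Borel--Cantelli argument using only first-moment truncation (Lemma~\ref{lem5} on $W^*$), completely bypassing any $L^2$ estimate. This separation of the Edgeworth analysis (deterministic once $\xi$ is fixed) from the branching fluctuations (handled in $L^1$ with truncation) is precisely what makes the weak moment condition \eqref{EEB-eq2.1} sufficient, and it is the ingredient missing from your plan.
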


\begin{thm}[Third order expansion] \label{EEB-thm2}
Assume  $\E m_0^{-\delta}<\infty$ for some $\delta>0$,    \eqref{EEB-eq2.1} for some  $\lambda>32$  and \eqref{EEB-eq2.3} for some $ \eta>40$.   Then
  for $t\in \R$, { as $n \rightarrow \infty$, }
  \begin{multline}\label{EEB-eq2.8}
   \frac{1}{\Pi_n}  Z_n( \ell_n + s_n t ) =\Bigg (\Phi(t)+\sum_{\nu=1}^3\frac{  Q_{\nu,n}(t)}{n^{\nu/2}}  \Bigg) W +\Big(-\frac{1}{s_n}\Big)\Bigg( \phi(t)+\sum_{\nu=1}^2\frac{  Q'_{\nu,n}(t)}{n^{\nu/2}} \Bigg)V_1 \\ +\frac{1}{2!}\frac{1}{s_n^2}\Bigg (\phi'(t) + \frac{Q''_{1,n}(t)}{n^{1/2}}\Bigg )V_2   +   \frac{1}{3!}\frac{-1}{s_n^3} \phi''(t)    V_3+o\big(\frac{1}{n^{3/2}}\big)  \mbox{~~ a.s. },
  \end{multline}
%  \begin{equation}\label{EEB-eq2.5}
%   n \Bigg[ \frac{1}{\Pi_n}  Z_n( \ell_n + s_n t ) - \Phi(t) W- \frac{s_n^{(3)}}{6s_n^3 } (1-t^2) \phi(t) W+ \frac{1}{s_n} \phi(t) V_1\Bigg]  \xrightarrow{n \rightarrow \infty }  \mathcal{R}(t)   \mbox{~~ a.s. },
%  \end{equation}
 \mbox{ where } $Q_{1,n}$, $Q_{2,n}$  are defined   by \eqref{EEB-eq2.6}   and \eqref{EEB-eq2.7},  and
 \begin{eqnarray}
 % to remove numbering (before each equation)
  \nonumber     \frac{  Q_{3,n}(t)}{n^{3/2}}  &=&  -\frac{\big(s_n^{(3)} \big)^3}{1296 s_n^{9}} H_8(t) \phi(t) -\frac{1}{120 s_n^{5}}\sum_{j=0}^{n-1} \Big(\sigma_j^{(5)} -10\sigma_j^{(3)}\sigma_j^{(2)} \Big)H_4(t)  \phi(t)\\ && -\frac{ s_n^{(3)} } { 144  s_n^{7}} \sum_{j=0}^{n-1} \Big(\sigma_j^{(4)} -3\big(\sigma_j^{(2)}\big)^2  \Big)H_6(t)  \phi(t)  .
 \end{eqnarray}
 %\begin{multline}\label{EEB-eq2.6 }
%    \mathcal{R}_n(t) = n\Bigg[-\frac{1}{2s_n^2}H_1(t)\phi(t)V_2 -\frac{ s_n^{(3)}}{6s_n^{4}}H_3(t) \phi(t) V_1 - \\
%     \Bigg(\frac{(s_n^{(3)})^{2}}{72s_n^{6}}H_5(t)+\frac{\sum_{j=1}^n\Big (\sigma_j^{(4)} -3 \big(\sigma_j ^{(2)}\big) ^2\Big)}{24s_n^{4}}H_3(t)\Bigg)\phi(t)W\Bigg].
% \end{multline}
  %\begin{equation*}
%  \mathcal{R}_n(t) = \frac{ 1}{2s_n^2}H_1(t)\phi(t)V_2 -\frac{ s_n^{(3)}}{6s_n^{4}}H_3(t) \phi(t) V_1 - Q_n(t)\phi(t)W,
%% \mathcal{R}(t) = \frac{H_1(t)\phi(t)V_2 }{2\E \sigma_0^{(2)}} -\frac{\E \sigma_0^{(3)} H_3(t) \phi(t) V_1}{6(\E \sigma_0^{(2)})^{2}} - Q(t)\phi(t)W,
% \end{equation*}
% with $$ Q_n(t)=  \frac{(s_n^{(3)})^{2}H_5(t)}{72s_n^{6}}+\frac{\sum_{j=1}^n\Big (\sigma_j^{(4)} -3 \big(\sigma_j ^{(2)}\big) ^2\Big)H_3(t)}{24s_n^{4}} .$$
 %$$ Q(t)=  \frac{(\E \sigma_0^{(3)})^{2}H_5(t)}{72(\E \sigma_0^{(2)})^{3}}+\frac{\E (\sigma_0^{(4)} -3\big(\sigma_0^{(2)}\big)^2)H_3(t)}{24(\E \sigma_0^{(2)})^{2}} .$$
\end{thm}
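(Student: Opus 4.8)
The plan is to study $F_n(t):=\frac{1}{\Pi_n}Z_n(\ell_n+s_nt)$ by freezing the genealogy together with the displacements up to a slowly growing intermediate generation and then running an Edgeworth expansion on the ``fresh'' part of each surviving path. Concretely, I would fix a level $k_n=\lfloor n^\alpha\rfloor$ with a suitable $\alpha\in(0,1)$ and split $F_n(t)=\E[F_n(t)\mid\D_{k_n}]+\big(F_n(t)-\E[F_n(t)\mid\D_{k_n}]\big)$. The conditional expectation will carry the whole expansion, while the second term is a pure fluctuation that I expect to be negligible. For the residual, conditioning on $\D_{k_n}$ renders two particles whose most recent common ancestor sits at a level $\le k_n$ \emph{conditionally independent} (their post-$k_n$ increments then involve disjoint displacement variables), so only pairs that split after generation $k_n$ contribute to $\mathrm{Var}(F_n(t)\mid\D_{k_n})$; the proportion of such pairs is exponentially small in $k_n$ because the branching is supercritical, whence this conditional variance is smaller than any negative power of $n$. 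A conditional Chebyshev inequality together with Borel--Cantelli then yields $F_n(t)-\E[F_n(t)\mid\D_{k_n}]=o(n^{-3/2})$ a.s., the moment hypotheses and a truncation of the offspring and of the displacements being used to make the Borel--Cantelli summation legitimate.

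The core of the argument is the expansion of $\E[F_n(t)\mid\D_{k_n}]$. By the branching property, for $v\in\T_{k_n}$ the relative displacements of its descendants form an independent copy of the walk of depth $n-k_n$, so $\E[F_n(t)\mid\D_{k_n}]=\frac{1}{\Pi_{k_n}}\sum_{v\in\T_{k_n}}\bar G_{k_n,n}(\ell_n+s_nt-S_v)$, where $\bar G_{k_n,n}$ is the quenched distribution function of the depth-$(n-k_n)$ spine sum. Here I would invoke the Edgeworth expansion for distribution functions of sums of independent, non-identically distributed variables from Section \ref{EEB-sec3}: Cram\'er's condition \eqref{EEB-eq2.3}, holding with positive probability and hence for a positive proportion of the i.i.d.\ levels, furnishes the characteristic-function decay needed to expand $\bar G_{k_n,n}$ to order $(n-k_n)^{-3/2}$. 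Writing the argument as $\tau_v=\frac{s_nt-(S_v-\ell_{k_n})}{s_{k_n,n}}$ with $s_{k_n,n}^2=s_n^2-s_{k_n}^2$, I would Taylor-expand $\Phi(\tau_v)$ and the Edgeworth corrections in the ``early'' displacement $S_v-\ell_{k_n}$ up to third order.

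Summing over $v$ and dividing by $\Pi_{k_n}$ turns the powers $(S_v-\ell_{k_n})^j$ into the martingales of Theorem \ref{EEB-prop}: the constant term gives $W_{k_n}$, the linear term $N_{1,k_n}$, and the quadratic and cubic terms $N_{2,k_n}+s_{k_n}^2W_{k_n}$ and $N_{3,k_n}+3s_{k_n}^2N_{1,k_n}+s_{k_n}^{(3)}W_{k_n}$ respectively. Replacing $W_{k_n},N_{\nu,k_n}$ by their limits $W,V_\nu$ costs only $o(n^{-3/2})$ thanks to the convergence rates of Theorem \ref{EEB-prop} (available for $\lambda>32$) and the fast convergence of $W_n$ to $W$, provided $\alpha$ is large enough that $k_n^{-(\lambda-\nu)}\ll n^{-3/2}$. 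Through $\phi=\Phi'$, $\phi'=\Phi''$, $\phi''=\Phi'''$ and the Edgeworth density corrections, the terms carrying $V_1,V_2,V_3$ then assemble into the $-\frac1{s_n}(\phi+\cdots)V_1$, $\frac1{2s_n^2}(\phi'+\cdots)V_2$ and $-\frac1{6s_n^3}\phi''V_3$ contributions of \eqref{EEB-eq2.8}.

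The delicate point, and what I expect to be the main obstacle, is the \emph{reassembly} of the remaining terms into the claimed $W$-coefficient $\Phi(t)+\sum_{\nu=1}^3Q_{\nu,n}(t)/n^{\nu/2}$. The ratio $s_n/s_{k_n,n}=(1-s_{k_n}^2/s_n^2)^{-1/2}$ differs from $1$ by a quantity of exact order $n^{\alpha-1}$, which is \emph{not} negligible at the order $n^{-3/2}$; hence the rescaling of the argument $\tau_v$, the ``early variance'' contributions $s_{k_n}^2W_{k_n}$ and $s_{k_n}^{(3)}W_{k_n}$ coming from the moment identities, and the depth-$(n-k_n)$ Edgeworth polynomials must be combined and shown to telescope. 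The mechanism that makes this work is the additivity of the cumulants, $s_n^{(\nu)}=s_{k_n}^{(\nu)}+s_{k_n,n}^{(\nu)}$ (together with the identity $\phi'(t)=-t\phi(t)$, which produces the exact cancellation against the argument rescaling), recombining the early and late cumulants into the full depth-$n$ cumulants appearing in $Q_{\nu,n}$. Carrying this bookkeeping through all three orders while keeping every discarded remainder provably $o(n^{-3/2})$ is the technical heart of the proof; the second-order statement \eqref{EEB-eq2.5} is then obtained by truncating the same computation one order earlier.
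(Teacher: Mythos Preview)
Your plan matches the paper's proof (Lemmas \ref{EEB-lem1}--\ref{EEB-lem3}): the same split at level $k_n=\lfloor n^\beta\rfloor$ with $\max\{4/\lambda,5/\eta\}<\beta<1/8$, the same Edgeworth expansion (Lemma \ref{lem-Edge-exp}) of the conditional mean followed by a Taylor expansion in $S_u/s_n$, and the same recombination of the early cumulants $s_{k_n}^{(\nu)}$ with the late ones via additivity and the Hermite recursion \eqref{EEBHermite}. One technical refinement to be aware of: under the $L\log^{1+\lambda}L$ hypothesis \eqref{EEB-eq2.1} the subtree masses $W_{n-k_n}(u)$ need not have finite second moment, so for the fluctuation $\mathbb{A}_n$ the paper truncates $X_{n,u}$ at level $\Pi_{k_n}$ and uses $\E[(W^*+1)(\ln(W^*+1))^\lambda]<\infty$ (Lemma \ref{lem5}) rather than a raw conditional-variance bound; and before Taylor-expanding it restricts to $\{|S_u|\le k_n\}$ (cf.\ \eqref{EEB-eq3.13}) so that all remainders are uniform in $u\in\T_{k_n}$.
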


%\begin{rem} \label{rem-EEB-thm-thm2}
%   Theorems \ref{EEB-thm}  and  \ref{EEB-thm2} remain valid if  condition \eqref{EEB-N} is replaced  by  $\E m_0 ^{-\delta}<\infty$  for some $\delta >0$.  We only need to do  a few  changes in the proofs, using the techniques in the proof of \cite[Theorem 2.3]{GL14} where the above moment condition is concerned.
   %In fact, with \eqref{EEB-N} replaced  by  the moment condition mentioned above, we can prove that  Lemmas \ref{EEB-lem11},  \ref{EEB-lem13}, \ref{EEB-lem1}
  % \ref{EEB-lem3} and  \ref{lem5} are still valid, using the techniques in the proof of \cite[Theorem 2.3]{GL14} where the above moment condition is concerned.
  %In fact, the condition $\E m_0 ^{-\delta}<\infty$   is one  condition  in Lemma  \ref{lem5} (used in proving Lemmas \ref{EEB-lem11} and \ref{EEB-lem1})  and is also needed in proving Lemmas \ref{EEB-lem13} and \ref{EEB-lem3}  by use of Theorem \ref{EEB-prop}.
 %  \end{rem}
%\vskip 3mm
The reduced versions  of \eqref{spa-cbrweq4} and  \eqref{EEB-eq2.5}  for a branching random walk with a deterministic environment  have been announced in \cite{GL16CRAS}.

Notice that when the branching random walk  dies out, then  $Z_n(\mathbb{R})=0 $ for $n$ large enough, so that  $W_n=N_{1,n}=N_{2,n}=N_{3,n}=0$, hence the expansions  \eqref{spa-cbrweq4}, \eqref{EEB-eq2.5}  and \eqref{EEB-eq2.8} becomes trivial.

From Theorems \ref{EEB-thm}  and  \ref{EEB-thm2},  we can derive  the second and third  order expansions for the branching Wiener process,  where the underlying branching process is a Galton-Watson process whose  offspring distribution has mean $m>1$ and the motion of particles is governed by  a Wiener process.
%When    $\Theta$   is singleton (i.e. the constant environment), the underlying branching process becomes the Galton-Watson process with offspring distribution $ \{ p_n: \}$ and we take the distribution of $L$ as  $\mathcal{N}(0,1)$, the model becomes the branching Wiener process studied in \cite{Chen2001} and we get a generalization of Chen's work.
For example, applying Theorem  \ref{EEB-thm2} to a constant environment and to a Gaussian moving law (for which   the condition   \eqref{EEB-eq2.3}  is  valid for all $\eta>0$), we obtain:
\begin{cor} [Third  order expansion for the branching Wiener process]  \label{EEBcor}
For the branching Wiener process whose offspring distribution $\{p_k: k\geq 0\}$  satisfies
 $m:= \sum_{k=0}^\infty k p_k >1$ and $\sum_{k=0}^\infty  k (\ln k)^{1+\lambda} p_k <\infty$  for some $\lambda>32$, we have,
for $t\in \R$, { as $n \rightarrow \infty$},
\begin{equation*}
   \frac{1}{m^n}{Z_n(  \sqrt{n} t ) } =  \Phi(t) W- \frac{1}{\sqrt{n} } \phi(t) V_1- \frac{1}{2 n}t\phi(t)V_2- \frac{1}{6 n^{3/2}}(t^2-1)\phi(t)V_3+  o\big(\frac{1}{n^{3/2}}\big)  \mbox{~~ a.s. }
\end{equation*}
\end{cor}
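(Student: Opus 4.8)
The plan is to obtain Corollary \ref{EEBcor} as a direct specialization of Theorem \ref{EEB-thm2} to a deterministic environment with standard Gaussian displacements, first checking that the hypotheses hold and then observing that every correction polynomial degenerates to zero. First I would verify the assumptions. Since the environment is constant we have $m_0=m$ deterministically, so $\Pi_n=m^n$, $\E m_0^{-\delta}=m^{-\delta}<\infty$ for every $\delta>0$, and $\ln^- m_0=0$; the requirement $\E\ln m_0=\ln m>0$ is exactly $m>1$, and the moment condition in \eqref{EEB-eq2.1} reduces to $\frac1m\sum_k k(\ln^+ k)^{1+\lambda}p_k<\infty$, which is the assumed $\sum_k k(\ln k)^{1+\lambda}p_k<\infty$ with $\lambda>32$. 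For the Gaussian moving law the characteristic function $e^{-t^2/2}$ tends to $0$ at infinity, so $\limsup_{|t|\to\infty}\big|\E_\xi e^{it\widehat{L}_0}\big|=0<1$ and the Cram\'er condition in \eqref{EEB-eq2.3} holds; moreover all moments are finite, so \eqref{EEB-eq2.3} is valid for every $\eta$, in particular $\eta>40$.

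Next I would compute the relevant moments. Taking the displacement law to be $N(0,1)$, so that $\ell_n=0$ and $s_n=\sqrt{n}$, gives $l_n=0$, $\sigma_n^{(2)}=1$, $\sigma_n^{(3)}=0$, $\sigma_n^{(4)}=3$, $\sigma_n^{(5)}=0$. Consequently $s_n^{(3)}=0$, and the two cumulant-type combinations $\sigma_j^{(4)}-3\big(\sigma_j^{(2)}\big)^2$ and $\sigma_j^{(5)}-10\sigma_j^{(3)}\sigma_j^{(2)}$ vanish identically in $j$. Substituting these into \eqref{EEB-eq2.6}, \eqref{EEB-eq2.7} and the displayed formula for $Q_{3,n}$ shows that each summand of $Q_{1,n}$, $Q_{2,n}$ and $Q_{3,n}$ contains one of these vanishing factors, so $Q_{1,n}=Q_{2,n}=Q_{3,n}\equiv 0$. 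The same mechanism should kill the correction polynomials $Q'_{1,n}$, $Q'_{2,n}$ and $Q''_{1,n}$, since they are assembled from the same vanishing combinations. Confirming this is the one place where I must inspect the explicit definitions of the primed polynomials, and I expect it to be the only nontrivial bookkeeping step.

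Finally, with all the $Q$'s gone, the expansion \eqref{EEB-eq2.8} collapses to
\[
\frac{1}{m^n}Z_n(\sqrt{n}\,t)=\Phi(t)W-\frac{1}{\sqrt{n}}\phi(t)V_1+\frac{1}{2!}\frac{1}{n}\phi'(t)V_2-\frac{1}{3!}\frac{1}{n^{3/2}}\phi''(t)V_3+o\big(n^{-3/2}\big)\quad\mbox{a.s.}
\]
Using the Hermite relation $\phi^{(k)}(t)=(-1)^k H_k(t)\phi(t)$, namely $\phi'(t)=-t\phi(t)$ and $\phi''(t)=(t^2-1)\phi(t)$, then produces the coefficients $-\frac{1}{2n}t\phi(t)$ and $-\frac{1}{6n^{3/2}}(t^2-1)\phi(t)$ in front of $V_2$ and $V_3$, which is precisely the claimed expansion. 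There is no genuine analytic obstacle beyond Theorem \ref{EEB-thm2} itself: the work consists of the hypothesis check and the verification that the Gaussian's vanishing odd central moments and zero excess kurtosis annihilate every intermediate correction term.
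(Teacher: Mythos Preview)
Your proposal is correct and matches the paper's own derivation, which simply states that the corollary follows by applying Theorem \ref{EEB-thm2} to a constant environment and a Gaussian moving law. The only point worth clarifying is your hesitation about the primed polynomials: in the paper's notation $Q'_{\nu,n}$ and $Q''_{\nu,n}$ are just the $t$-derivatives of $Q_{\nu,n}$ (as the conjectured general formula \eqref{EEB-eq2.9} makes explicit), so once $Q_{\nu,n}\equiv 0$ their derivatives vanish automatically and no additional inspection is needed.
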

\begin{rem}
 (1) This corollary   extends    \cite[Theorem 3.2]{Chen2001}, which  gave   the first  order expansion of the central limit theorem under the second moment condition $\sum_k k^2 p_k <\infty$ for this model.  It should  be mentioned  that   in  \cite{ReveszRosenShi2005}, the full expansion for the local limit theorem was obtained for the same model.  However,     Corollary  \ref{EEBcor}  cannot be  derived   from the expansion in \cite{ReveszRosenShi2005}  (and vice versa). \; (2) A similar result can be easily formulated for the branching Wiener process in a random environment.
\end{rem}

Inspired by Theorems \ref{EEB-thm} and \ref{EEB-thm2}, we have the following conjecture for the asymptotic expansion of
 finite  order.

\vskip 3mm
%\noindent {\textbf{Conjecture}}

 \begin{conj}[Finite order expansion] \label{conj-finite-order-exp}
  Let  $\kappa\geq 1$ be an integer.
  Assume  $\E m_0^{-\delta}<\infty$ for some $\delta>0$,    \eqref{EEB-eq2.1}  and \eqref{EEB-eq2.3} for some $\lambda >0$  and $ \eta >0 $ large enough.    Then
    \begin{multline}\label{EEB-eq2.9}
   \frac{1}{\Pi_n}  Z_n( \ell_n + s_n t ) =\Bigg (\Phi(t)+\sum_{\nu=1}^\kappa\frac{  Q_{\nu,n}(t)}{n^{\nu/2}}  \Bigg) W\\ +\sum_{j=1}^\kappa \frac{1}{j!} (-\frac{1}{s_n})^{j}   V_j\frac{d ^j}{ dt^j}  \Bigg (\Phi(t)+\sum_{\nu=1}^{\kappa-j}\frac{  Q_{\nu,n}(t)}{n^{\nu/2}}  \Bigg)   +o\big(\frac{1}{n^{\kappa/2}}\big)  \mbox{~~ a.s. },
  \end{multline}
where  $V_j$ are real random variables,  and
\begin{align*}
Q_{\nu,n}(x)&=  \sum { }^{'}(-1)^{\nu+2s}\Phi^{(\nu+2s)}(x) \prod_{m=1}^{\nu} \frac{1}{k_m!} \bigg(\frac{\lambda_{m+2,n}}{(m+2)!}\bigg)^{k_m}
    \\  & =- \phi(x)\sum { }^{'}  H_{\nu+2s-1}(x)\prod_{m=1}^{\nu} \frac{1}{k_m!} \bigg(\frac{\lambda_{m+2,n}}{(m+2)!}\bigg)^{k_m},
\end{align*}
  with   the summation  $ \sum { }^{'}$  being carried out over all  nonnegative integer solutions $(k_1, \dots, k_\nu  )$  of the equation   $k_1+2k_2+\cdots +\nu k_{\nu}=\nu$,    $s= k_1+\cdots+  k_\nu  $  and
\begin{align*}
      \quad   &   \lambda_{\nu,n }= n^{(\nu-2)/2} s_n^{-2\nu}  \sum_{j=0}^{n-1} \gamma_{\nu j},  \quad {\nu=3,4\cdots, k};\\
     &  \gamma_{\nu j}= \frac{1}{i^{\nu}}\left[ \frac{d^{\nu}}{ dt^{\nu}} \log \E_\xi e^{it  (\widehat{L}_j- l_j )} \right]_{t=0}, \qquad {\nu =1,2,\cdots }.
\end{align*}
\end{conj}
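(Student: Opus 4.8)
The plan is to realise the three martingales of Theorem~\ref{EEB-prop} as the first Taylor coefficients of a single \emph{empirical characteristic function martingale}, and to run the entire expansion at the level of characteristic functions before inverting. Set $\psi_n(\theta)=\prod_{k=0}^{n-1}\E_\xi e^{i\theta(\widehat L_k-l_k)}$ and
\[
   M_n(\theta)=\frac{1}{\Pi_n}\sum_{u\in\T_n}\frac{e^{i\theta(S_u-\ell_n)}}{\psi_n(\theta)} .
\]
A direct computation, using $S_{ui}=S_u+L_{ui}$ together with $\Pi_{n+1}=\Pi_n m_n$ and $\psi_{n+1}(\theta)=\psi_n(\theta)\,\E_\xi e^{i\theta(\widehat L_n-l_n)}$, shows that $(M_n(\theta),\D_n)$ is a martingale for each fixed $\theta$. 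Since $e^{i\theta x}/\psi_n(\theta)=\sum_{j\ge0}\frac{(i\theta)^j}{j!}\mathcal A_{j,n}(x)$, with $\mathcal A_{j,n}$ the degree-$j$ Appell polynomial attached to the cumulants of $S-\ell_n$, the coefficients $N_{j,n}=\Pi_n^{-1}\sum_{u\in\T_n}\mathcal A_{j,n}(S_u-\ell_n)$ are themselves $(\D_n)$-martingales; one checks $\mathcal A_{0,n}=1$, $\mathcal A_{1,n}(x)=x$, $\mathcal A_{2,n}(x)=x^2-s_n^2$, $\mathcal A_{3,n}(x)=x^3-3s_n^2x-s_n^{(3)}$, so $N_{0,n},N_{1,n},N_{2,n},N_{3,n}$ are $W_n$ and the three martingales of the paper. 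The first task is therefore to \emph{extend Theorem~\ref{EEB-prop} to all $j\le\kappa$}, proving $N_{j,n}-V_j=o(n^{-\lambda+j+\delta})$ a.s.\ for every $\delta>0$, which under $\E|\widehat L_0|^{\eta}<\infty$ and $\E(\ln^-m_0)^{1+\lambda}<\infty$ for $\eta,\lambda$ large should follow by the same truncation plus martingale-approximation scheme used there, the Appell structure guaranteeing that the increment $N_{j,n+1}-N_{j,n}$ has centred conditional mean.

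Second, I would apply the Edgeworth expansion for sums of independent variables quoted in Section~\ref{EEB-sec3} to the quenched walk $(S-\ell_n)/s_n$. Writing $F_n(t)=\P_\xi\big((S-\ell_n)/s_n\le t\big)$, the cumulant expansion $\log\psi_n(\theta/s_n)=-\theta^2/2+\sum_{\nu\ge3}\frac{(i\theta)^\nu}{\nu!}\,s_n^{-\nu}\sum_{k}\gamma_{\nu k}$ feeds into the standard formal exponentiation and Fourier inversion to yield
\[
   F_n(t)=\Phi(t)+\sum_{\nu=1}^{\kappa}\frac{Q_{\nu,n}(t)}{n^{\nu/2}}+o\big(n^{-\kappa/2}\big),
\]
with $Q_{\nu,n}$ exactly the polynomials of the statement. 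The Cram\'er condition in \eqref{EEB-eq2.3}, which holds with positive probability and hence, by ergodicity of the i.i.d.\ environment, along $\P$-a.e.\ realisation of $\xi$, is what licenses this expansion of the \emph{distribution function} (not merely of smooth functionals) uniformly in $t$.

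Third comes the fusion of the two ingredients. For smooth rapidly decreasing $g$, Fourier inversion gives $\Pi_n^{-1}\sum_{u\in\T_n}g\big((S_u-\ell_n)/s_n\big)=\frac{1}{2\pi}\int\widehat g(\theta)\,M_n(\theta/s_n)\,\psi_n(\theta/s_n)\,d\theta$. Substituting the truncated Taylor series $M_n(\theta/s_n)=\sum_{j=0}^{\kappa}\frac{(i\theta)^j}{j!\,s_n^{j}}N_{j,n}+\text{(remainder)}$ and the Edgeworth factorisation of $\psi_n(\theta/s_n)$, then collecting powers of $n^{-1/2}$, the $j=0$ term reproduces $W_n\big(\Phi+\sum_{\nu\le\kappa}Q_{\nu,n}/n^{\nu/2}\big)$, while for $j\ge1$ inversion of $(i\theta)^j$ against the Edgeworth characteristic function produces exactly $\frac{1}{j!}(-1/s_n)^j\,\frac{d^j}{dt^j}\big(\Phi+\sum_{\nu\le\kappa-j}Q_{\nu,n}/n^{\nu/2}\big)$ multiplied by $N_{j,n}$, since $N_{j,n}s_n^{-j}$ is already of order $n^{-j/2}$ and only the Edgeworth factor up to order $\kappa-j$ survives. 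A smoothing (Esseen-type) inequality, with the high-frequency tail again controlled by the Cram\'er condition, passes from smooth $g$ to $g=\mathbf{1}_{(-\infty,t]}$, converting the density expansion into the CDF expansion \eqref{EEB-eq2.9}. Finally one replaces $W_n\to W$ and $N_{j,n}\to V_j$ using the rates of the first step: the induced error is $s_n^{-j}(N_{j,n}-V_j)=o(n^{j/2-\lambda+\delta})$, which is $o(n^{-\kappa/2})$ once $\lambda>\kappa$ (larger still for the remaining remainders), hence for $\lambda$ large.

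The decisive obstacle is the \emph{first} step: the sharp a.s.\ convergence rate of the general Appell martingale $N_{j,n}$. For $j\le3$ this is Theorem~\ref{EEB-prop}, but for arbitrary $j$ the $p$-th moment and truncation estimates controlling $\sum_n\E_\xi|N_{j,n+1}-N_{j,n}|^2$ must be shown to close with rate $o(n^{-\lambda+j+\delta})$ uniformly in $j\le\kappa$, and it is precisely the bookkeeping of the growing number of cumulant cross-terms in $\mathcal A_{j,n}$, together with matching the moment threshold $\eta$ to the order $\kappa$, that we have not been able to push through in general; this is why \eqref{EEB-eq2.9} remains a conjecture. A secondary difficulty is ensuring that all error terms are $o(n^{-\kappa/2})$ \emph{simultaneously and almost surely}, i.e.\ interchanging the a.s.\ limits of the random coefficients $N_{j,n}$ with the deterministic Edgeworth remainders uniformly in $t$.
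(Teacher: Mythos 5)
First, a point of status: the statement you were asked to prove is Conjecture~\ref{conj-finite-order-exp}, and the paper contains \emph{no proof of it} --- it proves only the cases $\kappa=1,2,3$ (namely \eqref{spa-cbrweq4} and Theorems~\ref{EEB-thm}, \ref{EEB-thm2}) and explicitly states that a unified method for all $\kappa$ has not been found. So there is no paper proof to match; what can be compared is your programme against the paper's machinery at orders $2$ and $3$, and there your route is genuinely different in organization. You encode all the martingales simultaneously through the complex Biggins-type martingale $M_n(\theta)=\Pi_n^{-1}\sum_{u\in\T_n}e^{i\theta(S_u-\ell_n)}/\psi_n(\theta)$ and its Appell-polynomial Taylor coefficients, and your identification $\mathcal A_{0,n}=1$, $\mathcal A_{1,n}(x)=x$, $\mathcal A_{2,n}(x)=x^2-s_n^2$, $\mathcal A_{3,n}(x)=x^3-3s_n^2x-s_n^{(3)}$ correctly recovers $W_n$, $N_{1,n}$, $N_{2,n}$, $N_{3,n}$; this generating-function view also explains transparently why the coefficient of $V_j$ in \eqref{EEB-eq2.9} is the $j$-th derivative of the Edgeworth expansion truncated at order $\kappa-j$. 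The paper instead works in physical space: it splits $Z_n$ at time $k_n=\lfloor n^\beta\rfloor$, applies the Bai--Zhao theorem (Lemma~\ref{lem-Edge-exp}) to the \emph{deterministic quenched law} of the displacement over the last $n-k_n$ steps, Taylor-expands in $S_u/s_n$ for $|u|=k_n$, $|S_u|\le k_n$, and matches coefficients with truncated martingales $\overline{W}_{k_n},\overline{N}_{j,k_n}$.

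Two genuine gaps remain in your proposal, only one of which you name. The first, which you flag honestly, is the a.s.\ rate $N_{j,n}-V_j=o(n^{-\lambda+j+\delta})$ for general $j\le\kappa$: Theorem~\ref{EEB-prop} (proved by Asmussen-style truncation of the increments) covers only $j\le 3$, and closing the moment and truncation bookkeeping for arbitrary $j$ is precisely where the paper itself stops; your sketch offers no new mechanism there. The second gap you underestimate: in your Fourier-inversion ``fusion'' step the integral runs over all frequencies, and while the Cram\'er condition \eqref{EEB-eq2.3} controls the quenched step characteristic functions $v_j$ and hence $\psi_n$, it says nothing about the \emph{random} empirical factor $M_n(\theta/s_n)$ at high $|\theta|$; the remainder of your truncated series $\sum_{j\le\kappa}(i\theta)^jN_{j,n}/(j!\,s_n^j)$ grows polynomially in $\theta$, so the tail of the inversion integral is not controlled, and the Esseen smoothing step must then be carried out for a difference of random signed measures with errors that are $o(n^{-\kappa/2})$ almost surely and uniformly in $t$ --- a substantially harder statement than the in-probability analogues. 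The paper's conditioning at time $k_n$ is designed exactly to avoid this: after taking ${\E}_{\xi,k_n}$, the only surviving randomness is polynomial in the finitely many truncated positions $S_u$, $|u|=k_n$, and the Edgeworth remainder is deterministic under $\P_\xi$. Any completed proof along your lines would need either an a.s.\ uniform high-frequency bound on $M_n$ or an analogous conditioning device. In sum: your proposal is a coherent and genuinely different programme whose shape matches the conjectured formula, but it is not a proof, and the statement remains open in the paper as well.
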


We remind that the term $ \Phi(t)+\sum_{\nu=1}^{\kappa  }  {n^{-\nu/2}}{  Q_{\nu,n}(t)}$
is the Edgeworth expansion of the distribution function of   sums of the random variables
$\widehat{L}_0, \widehat{L}_1,\cdots$.  See Lemma \ref{lem-Edge-exp} below.   The reader may refer to \cite{Petrov75} for more information  on the Edgeworth expansion.

For $\kappa =1, 2, 3$, the expansion is given respectively by  \eqref{spa-cbrweq4}, Theorem
\ref{EEB-thm} and Theorem \ref{EEB-thm2}. By using the method proposed in this article, we should be  able to  prove,  through tedious analysis,  the expansion formula for order $\kappa =4, 5, $ etc.  However,
% The proofs of the conjecture are not  so straightforward,    although we could handle the expansions of higher orders by  following  the method presented in this paper  and through  tedious manipulations.
     we have not yet found a simple and unified method valid for all $\kappa \geq 1$.    This seems to  need a great deal of  effort and will be our future aim.

For the proofs of Theorems \ref{EEB-thm} and \ref{EEB-thm2}, we further develop   the approaches  used in \cite{GL14}. Like in \cite{GL14}, the basic ideas are
the Edgeworth expansion for an approximation of the cumulative
distribution function of the sum of independent random variables (to control the position of  particles in $n$-th generation, which  makes appear the Chebyshev-Hermite polynomials), the approximation by martingales, and a suitable decomposition of $Z_n(A)$ involving the independence of each particle (conditionally on the
environment) from time $ k_n = \lfloor n^\beta \rfloor $ for some $\beta \in (0,1)$  (see \eqref{EEBeq3-1}), where $\lfloor n^\beta \rfloor $ denotes the integral part of $n^\beta$.
However, the adaption of the approaches in \cite{GL14} (proposed  for the first order)
to higher orders is far from being evident, and the progress of the approaches in the present article is significant.  Actually, to obtain the higher order expansions, we perform much more effort
 than in \cite{GL14}. This can be seen through three aspects.   Firstly, we need to extract more  terms from the Edgeworth expansion by using Taylor's expansion, which are rather tedious due to the complexity of the Edgeworth expansion.  Secondly, we should carefully  analyze the extracted terms  and suitably combine them; in particular   we  need to find out new martingales
which appear in describing the higher order expansion, and show their convergence and  their rate of convergence;  furthermore, even for the known martingales  $(N_{1,n})$ and $(N_{2,n})$, we need to investigate their convergence rates, which were not studied in the  previous work \cite{GL14}.
Thirdly,  the
time $k_n$   for the decomposition of the branching random walk needs to be in a smaller time range (than in \cite{GL14}),  to guarantee the Edgeworth expansion at a next order
during the remaining time interval.

\medskip
For simplicity and without any loss of generality, hereafter we will always assume that $l_n=0$ (otherwise, we only need to replace  $L_{ui}$ by $L_{ui}-l_n$) and hence $\ell_n=0$. In the  following, we will use $K_\xi $ as a constant depending on the environment, which may change from line to line.
\section{Preliminary results}\label{EEB-sec3}

\subsection{The Edgeworth expansion for sums of independent random variables}
To begin with, we present the Edgeworth expansion  for the distribution function of sums of independent random variables, which  is needed  to prove the main theorems.    We recall the version obtained by Bai and Zhao (1986, \cite{BaiZhao1986}),  which generalizes the i.i.d. case \cite[p.159, Theorem 1]{Petrov75}.

Let $\{X_j\}$ be  independent random variables,  satisfying for each $j\geq 1$
\begin{equation}\label{cbrwa1}
   \E X_j=0 \;\; \mbox{ and } \;\;  \E |X_j|^{k} <\infty
\end{equation}
 for some integer   $k \geq 3$. We write  $B_n^2 = \sum_{j=1}^{n} \E X_j^2$ and only consider the nontrivial case $B_n>0$.
Let $\gamma_{\nu j}$  be  the $\nu$-order cumulant of $X_j$  for each $j\geq1$, defined by
\begin{equation*}
   \gamma_{\nu j}= \frac{1}{i^{\nu}}\left[ \frac{d^{\nu}}{ dt^{\nu}} \log \E e^{it X_j } \right]_{t=0}, \qquad {\nu =1,2,\cdots }.
\end{equation*}
Write
\begin{align*}
    &  \lambda_{\nu,n }= n^{(\nu-2)/2} B_n^{-\nu}  \sum_{j=1}^n \gamma_{\nu j},  \quad {\nu=3,4\cdots, k}; \\
    &  Q_{\nu,n}(x)=  \sum { }^{'}(-1)^{\nu+2s}\Phi^{(\nu+2s)}(x) \prod_{m=1}^{\nu} \frac{1}{k_m!} \bigg(\frac{\lambda_{m+2,n}}{(m+2)!}\bigg)^{k_m}
    \\ & \qquad \quad =- \phi(x)\sum { }^{'}  H_{\nu+2s-1}(x)\prod_{m=1}^{\nu} \frac{1}{k_m!} \bigg(\frac{\lambda_{m+2,n}}{(m+2)!}\bigg)^{k_m},
\end{align*}
where the summation  $ \sum { }^{'}$  is carried out over all  nonnegative integer solutions $(k_1, \dots, k_\nu  )$  of the equation $k_1+2k_2+\cdots +\nu k_{\nu}=\nu $  and $s=k_1+\cdots+  k_\nu$.

 For   $ 1\leq j\leq n$   and   $x\in \R$, define
\begin{align*}
    & F_n(x)= \P \Big (  {B_n}^{-1}  \sum_{j=1}^n X_j  \leq x\Big),    \quad v_j(t) = \E e^{itX_j}; \\
    &Y_{nj}= X_j \mathbf{1}_{\{ |X_j| \leq B_n\}},  \quad    Z_{nj}^{(x)}= X_{j }\mathbf{1}_{ \{|X_j| \leq B_n(1+|x|)\}}, \quad W_{nj}^{(x)}= X_{j }\mathbf{1}_{ \{|X_j| > B_n(1+|x|)\}}.
\end{align*}
The  Edgeworth expansion  theorem can be stated as follows.
\begin{lem} [\cite{BaiZhao1986}] \label{lem-Edge-exp}
%\noindent{\bf Theorem  A (\cite{BaiZhao1986})}
  Let $n\geq 1$ and $X_1, \cdots, X_n$ be a sequence of  independent random variables satisfying    $ B_n>0$.   Let $k\geq 3$ be an integer such that \eqref{cbrwa1}  holds. Then
\begin{multline*}
   | F_n(x) - \Phi(x)- \sum_{\nu=1}^{k-2} Q_{\nu n}(x)n^{-\nu/2} | \leq   C(k)\Bigg\{   (1+|x|)^{ -k} B_n^{-k} \sum_{j=1}^n  \E |W_{nj}^{(x)}|^k +    \\   (1+|x|)^{ -k-1} B_n^{-k-1} \sum_{j=1}^n\E |Z_{nj}^{(x)}|^{k+1}  +   (1+|x|)^{ -k-1} n^{k(k+1)/2}\Big( \sup_{|t|\geq \delta_n} \frac{1}{n} \sum_{j=1}^n |v_{j}(t)| +\frac{1}{2n} \Big)^n \Bigg\},
   \end{multline*}
where $\displaystyle \delta_n =  \frac{1}{12}  {B_n^2}{  (\sum_{j=1}^n\E  |Y_{nj}|^3)^{-1}  }$,  $C(k)>0 $  is a constant depending only on $k$.
 \end{lem}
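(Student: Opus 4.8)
The plan is to prove the inequality by the classical Fourier (characteristic-function) method, combining an $x$-dependent truncation, a cumulant expansion of the characteristic function on low frequencies, and a non-uniform smoothing inequality of Esseen--Bikelis type that carries the $x$-weight. First I would fix $x$ and split each summand at the level $B_n(1+|x|)$, writing $X_j=Z_{nj}^{(x)}+W_{nj}^{(x)}$. On the event that at least one $W_{nj}^{(x)}\neq 0$ the truncated and untruncated normalized sums differ, and comparing their distribution functions costs at most $\sum_{j=1}^n\P(|X_j|>B_n(1+|x|))$; by the truncated-moment form of Markov's inequality, $\P(|X_j|>a)\le a^{-k}\E|X_j|^k\mathbf{1}_{\{|X_j|>a\}}$ with $a=B_n(1+|x|)$, this is at most $(1+|x|)^{-k}B_n^{-k}\sum_{j=1}^n\E|W_{nj}^{(x)}|^k$, which is exactly the first term on the right-hand side. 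This reduces the problem to the sum of the bounded variables $Z_{nj}^{(x)}$, whose moments up to order $k+1$ are now finite and controlled by $\E|Z_{nj}^{(x)}|^{k+1}$.

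Next I would compare the distribution function $\tilde F_n$ of the normalized truncated sum with $G_n(x):=\Phi(x)+\sum_{\nu=1}^{k-2}Q_{\nu n}(x)n^{-\nu/2}$. The Chebyshev--Hermite form of the $Q_{\nu n}$ is designed precisely so that the Fourier--Stieltjes transform of $G_n$ is $\hat g_n(t)=e^{-t^2/2}\bigl(1+\sum_{\nu=1}^{k-2}n^{-\nu/2}P_\nu(it)\bigr)$, the polynomials $P_\nu$ being generated by the cumulants $\gamma_{\nu j}$ through the quantities $\lambda_{\nu,n}$. To obtain the non-uniform decay in $x$ I would invoke a weighted smoothing inequality: estimating $(1+|x|)^{k+1}\bigl(\tilde F_n(x)-G_n(x)\bigr)$ against a smoothing kernel of scale $T$, one bounds it by the integral over $|t|\le T$ of $\bigl|\tfrac{d^m}{dt^m}\bigl(\tfrac{f_n(t)-\hat g_n(t)}{t}\bigr)\bigr|$ for $0\le m\le k+1$, plus a kernel tail negligible for large $T$; here $f_n$ is the characteristic function of the normalized truncated sum. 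The weight $(1+|x|)^{k+1}$ is exactly what the $k+1$ derivatives in $t$ encode, via the correspondence between $x^{k+1}H(x)$ and $H^{(k+1)}$ on the Fourier side.

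The heart of the matter is to control $f_n(t)-\hat g_n(t)$ on $|t|\le T$, which I would do by splitting at $|t|=\delta_n$. On the central range $|t|\le\delta_n$ I would expand $\log f_n$ in powers of $t$ up to order $k$, collect the coefficients into the cumulant polynomials, exponentiate, and match the outcome against $\hat g_n$; the remainder is of order $e^{-ct^2}\,\mathrm{poly}(t)\cdot B_n^{-(k+1)}\sum_j\E|Z_{nj}^{(x)}|^{k+1}$, and after integration against the kernel together with the weight derivatives it yields the term $(1+|x|)^{-k-1}B_n^{-k-1}\sum_{j=1}^n\E|Z_{nj}^{(x)}|^{k+1}$ --- the definition of $\delta_n$ through the third truncated moments $\sum_j\E|Y_{nj}|^3$ being exactly what legitimizes the Taylor expansion on this range. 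On the outer range $\delta_n\le|t|\le T$ the expansion is unavailable, so I would bound $|f_n(t)|=\prod_j|v_j(t/B_n)|$ directly: after the substitution $s=t/B_n$ placing the frequencies on the scale of the individual summands, the arithmetic--geometric mean inequality gives $\prod_j|v_j(s)|\le\bigl(\tfrac1n\sum_j|v_j(s)|\bigr)^n$, whence the factor $\bigl(\sup_{|t|\ge\delta_n}\tfrac1n\sum_{j=1}^n|v_j(t)|+\tfrac1{2n}\bigr)^n$ (the $\tfrac1{2n}$ coming from a standard refinement of the product bound near the cutoff), while the polynomial prefactor $n^{k(k+1)/2}$ absorbs the derivatives of the kernel and of $\hat g_n$ up to order $k+1$. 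Summing the truncation, central-range, and outer-range contributions gives the asserted inequality.

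The \textbf{main obstacle} is the non-uniform control in $x$: carrying the weight $(1+|x|)^{k+1}$ through the smoothing step while keeping the errors in the stated truncated-moment form ($\E|W_{nj}^{(x)}|^k$ and $\E|Z_{nj}^{(x)}|^{k+1}$) rather than as plain absolute moments. This forces the truncation level to depend on $x$ through $B_n(1+|x|)$, and one must verify that differentiating the weighted difference up to order $k+1$ interacts correctly with this $x$-dependent level, so that the truncation neither destroys the Edgeworth accuracy on the central frequencies nor spoils the tail estimate. Balancing these competing requirements is the delicate point distinguishing the Bai--Zhao refinement from the classical uniform Edgeworth expansion; once it is set up, the remaining estimates reduce to the routine, if lengthy, cumulant and moment computations.
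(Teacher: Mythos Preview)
The paper does not prove this lemma at all: it is quoted verbatim from Bai and Zhao (1986) and used as a black box, so there is no ``paper's own proof'' to compare against. Your sketch is a plausible outline of the standard Petrov--Bikyalis route to non-uniform Edgeworth bounds (truncation at an $x$-dependent level, weighted Esseen smoothing, cumulant expansion on $|t|\le\delta_n$, product bound on the outer range), and the ingredients you name are the right ones; but since the paper offers nothing to compare with, the only honest verdict is that your proposal is a reasonable high-level plan for reproving a cited result, not a comparison of approaches.
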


\subsection{Notation and a key decomposition} We first introduce some notation which will be used in the sequel.

% and let $K_\xi$ be constant depending on $\xi$  such that $$ \sup_n\{ \E_\xi |\widehat{L}_n|^q, q=1,2,4 \} <K_\xi.$$
In addition to the $\sigma-$fields $\mathscr{F}_n$ and  $\mathscr{D}_n$,  the following  $\sigma$-fields  will also be used:
\begin{eqnarray*}
% \nonumber to remove numbering (before each equation)
   % \mathscr{F}_0=\{\emptyset,\Omega \}, \quad   \mathscr{F}_n &=&  \sigma ( \xi, N_u:|u| < n), \mbox{ for  $n\geq 1$};\\
 % \mathscr{D}_0=\{\emptyset,\Omega \}, \quad  \mathscr{D}_n &=& \sigma ( \xi, N_u, L_{ui}:  i\geq 1, |u| <
%  n),  \mbox{ for  $n\geq 1$},\\
  \mathscr{I}_0=\{\emptyset,\Omega \}, \quad  \mathscr{I}_n &=&   \sigma ( \xi_k, N_u, L_{ui}: k<n, i\geq 1, |u| <
  n) \mbox{ for  $n\geq 1$}.
\end{eqnarray*}
Define the following conditional probabilities and  expectations:
\begin{eqnarray*}
% \nonumber to remove numbering (before each equation)
   %&& \P_{\xi,n}(\cdot )= \P_\xi(\cdot | \I_n),\quad  \E_{\xi,n}(\cdot )= \E_\xi(\cdot | \I_n); \\
 &&\P_{\xi, n}(\cdot ) = \P_\xi(\cdot | \D_n), \quad   \E_{\xi,n}(\cdot )= \E_\xi(\cdot | \D_n);\quad \P_{n}(\cdot )= \P(\cdot | \I_n), \quad  \E_{n}(\cdot )= \E(\cdot | \I_n).
 %\\  && \P_{\xi, \mathscr{F}_n}(\cdot ) = \P_\xi(\cdot | \F_n), \quad   \E_{\xi,\F_n}(\cdot )= \E_\xi(\cdot | \F_n)  .
\end{eqnarray*}

As usual, we write $\N^* = \{1,2,3,\cdots \}$ and denote by
$$ U= \bigcup_{n=0}^{\infty} (\N^*)^n $$
the set of all finite sequences, where $(\N^*)^0=\{\varnothing \}$ contains the null sequence $ \varnothing$.

For all $u\in U$, let $\mathbb{T}(u)$ be the shifted tree of $\mathbb{T}$ at $u$  with defining elements $\{N_{uv}\}$: we have
1) $\varnothing \in \mathbb{T}(u)$, 2) $vi\in \mathbb{T}(u)\Rightarrow v\in \mathbb{T}(u)$ and  3) if  $v\in \mathbb{T}(u)$, then $vi\in \mathbb{T}(u)$ if and only if $1\leq i\leq N_{uv} $. Define $\mathbb{T}_n(u)=\{v\in \mathbb{T}(u): |v|=n\}$. Then  $\mathbb{T}=\mathbb{T}(\varnothing)$ and $\mathbb{T}_n=\mathbb{T}_n(\varnothing)$.

 For $u\in (\N^*)^k (k\geq 0)
 $  and $n\geq 1$,
% let  $Z_n(u)$ denote the number of descendants of $u$ at time $n$  .
let $S_{u}$ be the position of $u$ and write
\begin{equation*}
Z_{n}(u,B)= \sum_{v \in \mathbb{T}_n(u)}\mathbf{1}_B(S_{uv}-S_u), \quad Z_{n}(u,t)= Z_{n}\Big(u, (-\infty,t]\Big).
\end{equation*}
Then  the law of $Z_{n}(u,B)$ under ${\P}_{\xi}$ is the same as that of $Z_n(B)$ under ${P}_{\theta^k\xi}$. Define
   \begin{eqnarray*}
% \nonumber to remove numbering (before each equation)
%&&Z_n(u) = Z_n(u  ,\mathbb{R}),\quad  W_{n}(u) =( \pi_n(\theta^k\xi))^{-1}Z_{n}(u), \\
  &&W_{n}(u,B) =Z_{n}(u,B)/\Pi_n(\theta^k\xi), \quad  W_n(u,t) = W_n(u,(-\infty,t]), \\
  && W_{n}(B) =Z_{n}( B)/ \Pi_n, \quad W_n(t) =W_n((-\infty,t]).
\end{eqnarray*}
By definition, we have $\Pi_{n}(\theta^k\xi)=m_k\cdots m_{k+n-1}$, $Z_n(B)= Z_n(\varnothing, B)$, $W_n(B)=W_n(\varnothing,B)$, $W_n= W_n(\mathbb{R})$.

For each $n$,
we choose an integer $k_n<n$ as follows. Let $\beta \in (0,1)$ whose value will be suitably fixed in the proofs and  set $k_n=\lfloor n^{\beta}\rfloor$,  the greatest integer not bigger than $ n^{\beta}$.
It is apparent   that
$$Z_n(s_nt)= \sum_{u\in \T_{k_n} }  Z_{n-k_n} ( u, s_nt-S_u), $$
from which we have the following important decomposition:
\begin{equation}\label{EEBeq3-1}
  \frac{1}{\Pi_n }Z_n(   s_n t )= \mathbb{A}_n + \mathbb{B}_n,
\end{equation}
 with
\begin{align*}
 &   \mathbb{A}_n= \frac{1}{\Pi_{k_n }}   \sum_{u\in \mathbb{T}_{k_n}}
  \left[W_{n-k_n}(u,s_n t-S_u)-  {\E}_{\xi,k_n} W_{n-k_n}(u,s_n t-S_u)\right],\\  \nonumber
  & \mathbb{B}_n= \frac{1}{\Pi_{k_n }}   \sum_{u\in \mathbb{T}_{k_n}}{\E}_{\xi,k_n} W_{n-k_n}(u,s_n t-S_u).
\end{align*}

\section{Proofs of   Theorems \ref{EEB-thm} and \ref{EEB-thm2} }\label{EEB-sec4}
\subsection{Outline of proofs}
In our proofs, we shall need  the following truncations of the martingales (recall that we   assume  $\ell_n=0$):
 \begin{eqnarray}
      % \nonumber to remove numbering (before each equation)
    \label{EEBeq3-5}      & & \overline{W}_{k_n}=\frac{1}{\Pi_{k_n}} \sum_{u\in \T_{k_n}}  \mathbf{1}_{\{|S_u| \leq k_n\}} ; \quad  \overline{N}_{1,k_n}= \frac{1}{\Pi_{k_n}} \sum_{u\in \T_{k_n}} S_u \mathbf{1}_{\{|S_u| \leq k_n\}}; \\
\label{EEBeq3-6}          & &\overline{N}_{2,k_n}= \frac{1}{\Pi_{k_n}} \sum_{u\in \T_{k_n}}( S^2_u-s_n^2) \mathbf{1}_{\{|S_u| \leq k_n\}}; \\
 \label{EEBeq3-7}         & & \overline{N}_{3,k_n}= \frac{1}{\Pi_{k_n}} \sum_{u\in \T_{k_n}}( S_u^3-  3S_us_n^2-s_n^{(3)}) \mathbf{1}_{\{|S_u| \leq k_n\}}.
      \end{eqnarray}

Notice that   the condition $ \E m_0^{-\delta}<\infty$ for some $\delta>0$ implies that $\E \big(\ln^- m_0\big)^{ \kappa }<\infty$  for all $\kappa>0$. Therefore Theorem \ref{EEB-prop}  remains valid under the hypotheses of  Theorems  \ref{EEB-thm} or \ref{EEB-thm2}.

To prove  Theorem  \ref{EEB-thm},  we use the decomposition \eqref{EEBeq3-1} with  $k_n= \lfloor n^{\beta}\rfloor$ and $      \max\{\frac{3}{\lambda}, \frac{4}{\eta} \}        <\beta < \frac{1}{6}$, and
  we divide the proof of \eqref{EEB-eq2.5}   into three lemmas.
%we take  a real number $ \beta$   such that $\max\{\frac{3}{\lambda}, \frac{4}{\eta} \}<\beta < \frac{1}{6}$  and set $k_n= \lfloor n^{\beta}\rfloor$.
% On the basis of the decomposition \eqref{EEBeq3-1},   we divide the proof of \eqref{EEB-eq2.5}   into three lemmas.
\begin{lem}\label{EEB-lem11}
Under the hypothesis of  Theorem \ref{EEB-thm}, with $k_n= \lfloor n^{\beta}\rfloor$ and $      \max\{\frac{3}{\lambda}, \frac{4}{\eta} \}        <\beta < \frac{1}{6}$, we have
\begin{equation}\label{EEBeq3-3a}
{n}\mathbb{A}_n \xrightarrow{n \rightarrow \infty } 0 \quad  \mbox{ a.s.}
\end{equation}
\end{lem}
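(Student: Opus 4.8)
The plan is to exploit the fact that $\mathbb{A}_n$ is, conditionally on $\mathscr{D}_{k_n}$, a normalized sum of independent centered random variables, and then to combine a conditional moment inequality with the exponential growth of $\Pi_{k_n}$ and a Borel--Cantelli argument. First I would observe that, since $\mathbb{E}_{\xi,k_n}\big[\tfrac{1}{\Pi_n}Z_n(s_nt)\big]=\mathbb{B}_n$, the quantity $\mathbb{A}_n$ is exactly the fluctuation of $\tfrac{1}{\Pi_n}Z_n(s_nt)$ about its conditional mean given $\mathscr{D}_{k_n}$. Writing
\[
  \mathbb{A}_n=\frac{1}{\Pi_{k_n}}\sum_{u\in\mathbb{T}_{k_n}}X_u,\qquad X_u=W_{n-k_n}(u,s_nt-S_u)-\mathbb{E}_{\xi,k_n}W_{n-k_n}(u,s_nt-S_u),
\]
the terms $\{X_u\}_{u\in\mathbb{T}_{k_n}}$ are, given $\mathscr{D}_{k_n}$, independent (the subtrees rooted at distinct generation-$k_n$ particles evolve independently once $\xi$ and $\mathscr{D}_{k_n}$ are fixed) and centered, with $|X_u|\le W_{n-k_n}(u,\mathbb{R})+1$.

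Next I would estimate a conditional $p$-th moment for a well-chosen $p\in(1,2]$. By the von Bahr--Esseen (Marcinkiewicz--Zygmund) inequality applied conditionally on $\mathscr{D}_{k_n}$,
\[
  \mathbb{E}_{\xi,k_n}|\mathbb{A}_n|^{p}\le \frac{C_p}{\Pi_{k_n}^{p}}\sum_{u\in\mathbb{T}_{k_n}}\mathbb{E}_{\xi,k_n}|X_u|^{p}.
\]
If each per-particle moment were bounded, $\mathbb{E}_{\xi,k_n}|X_u|^{p}\le C$, the right-hand side would be at most $C C_p\,W_{k_n}\Pi_{k_n}^{-(p-1)}$, because $\#\mathbb{T}_{k_n}=\Pi_{k_n}W_{k_n}$. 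Taking the full expectation and using $\mathbb{E}_\xi W_{k_n}=1$ together with the independence of the environment coordinates gives
\[
  \mathbb{E}|\mathbb{A}_n|^{p}\le C C_p\,\mathbb{E}\big[\Pi_{k_n}^{-(p-1)}\big]=C C_p\,\big(\mathbb{E}\,m_0^{-(p-1)}\big)^{k_n}.
\]
Since $\mathbb{E}\ln m_0>0$ and $\mathbb{E}\,m_0^{-\delta}<\infty$, one may fix $p-1\in(0,\delta]$ small enough that $\rho:=\mathbb{E}\,m_0^{-(p-1)}<1$; then $n^{p}\mathbb{E}|\mathbb{A}_n|^{p}\le C\,n^{p}\rho^{\,\lfloor n^{\beta}\rfloor}$ is summable, and by Markov's inequality and Borel--Cantelli one concludes $n\mathbb{A}_n\to 0$ a.s.

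The main obstacle is that the uniform bound $\mathbb{E}_{\xi,k_n}|X_u|^{p}\le C$ is \emph{not} available under the stated hypotheses: condition \eqref{EEB-eq2.1} controls $\widehat N_0$ only up to $L(\log L)^{1+\lambda}$, which confers no $L^{p}$ bound ($p>1$) on the normalized population $W_{m}(u,\mathbb{R})$, so the conditional $p$-th moments of $X_u$ may be infinite. This is precisely where the truncating argument enters: I would replace $X_u$ by a truncated version $\overline X_u$ (capping the subtree contribution, and truncating large displacements in the spirit of the indicators $\mathbf 1_{\{|S_u|\le k_n\}}$ of \eqref{EEBeq3-5}--\eqref{EEBeq3-7}), run the moment estimate above for the truncated sum $\overline{\mathbb{A}}_n$, and control the truncation error $\mathbb{A}_n-\overline{\mathbb{A}}_n$ by a separate Borel--Cantelli argument using the tail bounds coming from $\mathbb{E}\big(\tfrac{1}{m_0}\widehat N_0(\ln^+\widehat N_0)^{1+\lambda}\big)<\infty$ and $\mathbb{E}|\widehat L_0|^{\eta}<\infty$. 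Calibrating the truncation level against $k_n=\lfloor n^{\beta}\rfloor$ and the exponential decay $\Pi_{k_n}^{-(p-1)}$ is the delicate point, and it is the summability of the resulting error series that forces the lower bounds $3/\lambda<\beta$ and $4/\eta<\beta$; the upper constraint $\beta<1/6$ plays no role for $\mathbb{A}_n$ (larger $k_n$ only strengthens the decay above) and is instead dictated by the Edgeworth expansion of the companion term $\mathbb{B}_n$.
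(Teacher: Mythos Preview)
Your sketch correctly isolates the structure of $\mathbb{A}_n$ as a normalized sum of conditionally independent centered terms and correctly diagnoses the obstacle: under \eqref{EEB-eq2.1} the variables $W_{n-k_n}(u,\R)$ have no $L^p$ moment for any $p>1$, so the von Bahr--Esseen route cannot be used directly. But from that point on the proposal remains a plan rather than a proof, and two of its suggested fixes are off target.

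First, the displacement indicators $\mathbf{1}_{\{|S_u|\le k_n\}}$ and the moment condition on $\widehat L_0$ play no role in controlling $\mathbb{A}_n$; the constraint $\beta>4/\eta$ is imposed solely by the analysis of $\mathbb{B}_n$. For $\mathbb{A}_n$ only the branching tail enters, and the relevant inequality is $\beta>3/\lambda$. Second, the truncation that actually works is not a polynomial cap but the level $\Pi_{k_n}$ itself: set $\bar X_{n,u}=X_{n,u}\mathbf{1}_{\{|X_{n,u}|<\Pi_{k_n}\}}$ and $\bar{\mathbb{A}}_n=\Pi_{k_n}^{-1}\sum_{u\in\T_{k_n}}\bar X_{n,u}$. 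The argument then splits into three summability statements via the conditional Borel--Cantelli lemma applied to $\P_{k_n}(|n\mathbb{A}_n|>2\varepsilon)$:
(i) $\sum_n \P_{k_n}(\mathbb{A}_n\neq\bar{\mathbb{A}}_n)<\infty$;
(ii) $\sum_n \P_{k_n}(|\bar{\mathbb{A}}_n-\E_{\xi,k_n}\bar{\mathbb{A}}_n|>\varepsilon n^{-1})<\infty$;
(iii) $\sum_n \P_{k_n}(|\E_{\xi,k_n}\bar{\mathbb{A}}_n|>\varepsilon n^{-1})<\infty$.
The missing ingredient for all three is the bound $\E(W^*+1)(\ln(W^*+1))^{\lambda}<\infty$ for $W^*=\sup_n W_n$ (valid under $\E m_0^{-\delta}<\infty$ and \eqref{EEB-eq2.1}). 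With it, (i) and (iii) reduce to tail estimates of the form $\E\big[(W^*+1)\mathbf 1_{\{W^*+1\ge \Pi_{k_n}\}}\big]\le(\ln\Pi_{k_n})^{-\lambda}\E(W^*+1)(\ln(W^*+1))^{\lambda}\le K_\xi n^{-\lambda\beta}$, while for (ii) one uses Chebyshev (a genuine second moment, not a $p$th moment with $p<2$) together with
\[
\E_{k_n}\bar X_{n,u}^2\le 2\int_0^{\Pi_{k_n}} x\,\P(W^*+1>x)\,dx
\le C\,\E(W^*+1)(\ln(W^*+1))^{\lambda}\,\Pi_{k_n}(k_n\ln b)^{-\lambda}+C',
\]
which yields a bound of order $n^{2}k_n^{-\lambda}\asymp n^{2-\lambda\beta}$ for the $n$th term. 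Summability in all three steps follows precisely from $\lambda\beta>3$. This is the paper's argument (written out there for the companion statement with $n^{3/2}$ in place of $n$); your proposal is headed in the right direction but does not supply the specific truncation level, the $W^*$ moment lemma, or the second-moment computation that make it go through.
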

\begin{lem}\label{EEB-lem12} Under the hypothesis of  Theorem \ref{EEB-thm}, with $k_n= \lfloor n^{\beta}\rfloor$ and $      \max\{\frac{3}{\lambda}, \frac{4}{\eta} \}        <\beta < \frac{1}{6}$, we have, as $ n \rightarrow \infty$,
\begin{multline}\label{EEBeq3-4a}
 \mathbb{B}_n =\Bigg (\Phi(t)+\sum_{\nu=1}^{2}\frac{  Q_{\nu,n}(t)}{n^{\nu/2}} \Bigg) \overline{W}_{k_n}+\Big(-\frac{1}{s_n}\Big)\Bigg( \phi(t)+  \frac{  Q'_{1,n}(t)}{n^{1/2}}\Bigg)\overline{N}_{1,k_n}\\ +\frac{1}{2!}\frac{1}{s_n^2}  \phi'(t)  \overline{N}_{2,k_n}     +o\big(\frac{1}{n }\big)  \quad \mbox {a.s.},
\end{multline}
\end{lem}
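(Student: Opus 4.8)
The plan is to evaluate $\mathbb{B}_n$ in closed form and then read off the expansion from the Edgeworth expansion of Lemma~\ref{lem-Edge-exp}. The starting point is that, conditionally on $\D_{k_n}$ (which carries the whole environment $\xi$ and the positions $S_u$, $u\in\T_{k_n}$), the descendants of any $u\in\T_{k_n}$ evolve independently of the past, so the branching (many-to-one) property gives
\[
\E_{\xi,k_n}W_{n-k_n}(u,s_nt-S_u)=F_{n-k_n}(s_nt-S_u),
\]
where $F_{n-k_n}$ is the distribution function of the sum $\sum_{j=k_n}^{n-1}\widehat L_j$ of the displacements between generations $k_n$ and $n-1$. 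Since $l_n=0$ this sum is centred with variance $\tau_n^2:=s_n^2-s_{k_n}^2$, and $F_{n-k_n}$ is a deterministic function of the environment, \emph{the same} for every $u\in\T_{k_n}$. Hence $\mathbb{B}_n=\Pi_{k_n}^{-1}\sum_{u\in\T_{k_n}}F_{n-k_n}(s_nt-S_u)$, and the whole task reduces to a careful asymptotic analysis of this deterministic-envelope sum.

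First I would dispose of the particles with $|S_u|>k_n$: bounding $0\le F_{n-k_n}\le 1$ and using $\mathbf 1_{\{|S_u|>k_n\}}\le k_n^{-\eta}|S_u|^\eta$ together with the growth $\Pi_{k_n}^{-1}\sum_u|S_u|^\eta\asymp k_n^{\eta/2}$ shows this contribution is $O(k_n^{-\eta/2})=O(n^{-\beta\eta/2})=o(1/n)$, the last step using $\beta>4/\eta$. This is exactly why the truncated martingales $\overline W_{k_n},\overline N_{1,k_n},\overline N_{2,k_n}$ of \eqref{EEBeq3-5}--\eqref{EEBeq3-6} are the relevant objects. On the truncated sum I would apply Lemma~\ref{lem-Edge-exp} (with $k=4$, enough to reach order $Q_{2,n}$) to the standardised variable, writing, for $x=(s_nt-S_u)/\tau_n$,
\[
F_{n-k_n}(\tau_n x)=\Phi(x)+\frac{\widehat Q_{1,n}(x)}{(n-k_n)^{1/2}}+\frac{\widehat Q_{2,n}(x)}{n-k_n}+\mathrm{error},
\]
the polynomials $\widehat Q_{\nu,n}$ being built from the cumulants of $\widehat L_{k_n},\dots,\widehat L_{n-1}$. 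The three error terms of Lemma~\ref{lem-Edge-exp} must be shown to be $o(1/n)$ uniformly in the relevant range of $x$: the two moment terms are controlled by $\E|\widehat L_0|^\eta<\infty$ and the decay factors $(1+|x|)^{-k}$, while the characteristic-function term decays faster than any power of $n-k_n$. Because the Cram\'er condition in \eqref{EEB-eq2.3} holds only with positive probability, this last point requires the ergodic theorem: almost surely a positive proportion of the coordinates $\xi_{k_n},\dots,\xi_{n-1}$ satisfy Cram\'er, which keeps $\sup_{|t|\ge\delta}\frac1{n-k_n}\sum_j|v_j(t)|$ bounded away from $1$.

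Next I would substitute $x=x_u:=(s_nt-S_u)/\tau_n$ and Taylor-expand the smooth functions $\Phi$ and $\phi\,H_\bullet$ in the increment $x_u-t$, which under the truncation satisfies $|x_u-t|=O(k_n/s_n)=O(n^{\beta-1/2})$, so the Taylor remainders are controllable. Writing $\tau_n=s_n(1-s_{k_n}^2/s_n^2)^{1/2}$ and expanding, the coefficient of $(S_u)^0$ assembles into $\overline W_{k_n}$, that of $S_u$ into $-s_n^{-1}\overline N_{1,k_n}$ (since the linear coefficient is $-\tau_n^{-1}\phi(s_nt/\tau_n)\approx-s_n^{-1}\phi(t)$), and that of $S_u^2$ into $\overline N_{2,k_n}$ after the identity $\Pi_{k_n}^{-1}\sum_u S_u^2\mathbf 1=\overline N_{2,k_n}+s_n^2\overline W_{k_n}$. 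Tracking coefficients through $\Phi'=\phi$ and $\Phi''=\phi'$ produces the three structural terms of \eqref{EEBeq3-4a}, the $Q'_{1,n}$ correction arising from the interaction of the first Edgeworth term $\widehat Q_{1,n}$ with the linear Taylor term.

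The delicate point — and the one I expect to be the main obstacle — is the bookkeeping around the variance discrepancy between $\tau_n^2$ and $s_n^2$. Since $s_{k_n}^2/s_n^2\asymp k_n/n=n^{\beta-1}$ is \emph{larger} than $1/n$, the correction terms it generates are not individually negligible at the target precision $n^{-1}$ and cannot simply be discarded; rather, they must be shown to cancel in pairs through the Gaussian identities $\phi'(t)=-t\phi(t)$, $\phi''(t)=(t^2-1)\phi(t)$, and their Hermite analogues. For instance, expanding $\Phi(s_nt/\tau_n)$ produces a stray $\overline W_{k_n}$-term proportional to $t\phi(t)\,(s_{k_n}^2/s_n^2)$ that is cancelled exactly by the companion term proportional to $\phi'(t)\,(s_{k_n}^2/s_n^2)$ coming from the quadratic Taylor coefficient, precisely because $\phi'(t)=-t\phi(t)$. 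Once all intermediate powers of $s_{k_n}^2/s_n^2$ are absorbed in this way, I would finally replace the partial cumulants and the variance $\tau_n$ by their full-sum counterparts, so that each $\widehat Q_{\nu,n}$ becomes $Q_{\nu,n}$ evaluated with $s_n$; each such replacement costs a relative factor $O(n^{\beta-1})$ on a term already of size $O(n^{-\nu/2})$, hence $O(n^{\beta-1-\nu/2})=o(1/n)$ for $\nu=1,2$ as soon as $\beta<1/6$. Collecting everything yields \eqref{EEBeq3-4a}.
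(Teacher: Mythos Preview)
Your proposal is correct and follows essentially the same approach as the paper (which actually spells out only the third-order analog, Lemma~\ref{EEB-lem2}, and says the second-order case is similar): split off the particles with $|S_u|>k_n$, control the Edgeworth remainder uniformly via Lemma~\ref{lem-Edge-exp} together with the law of large numbers for the Cram\'er condition, and then Taylor-expand $\Phi$ and the $H_\nu\phi$ factors around $t$. The cancellation you highlight, between the linear Taylor contribution $\tfrac{s_{k_n}^2 t}{2s_n^2}\phi(t)$ and the $s_{k_n}^2$-piece of the quadratic contribution $\tfrac{1}{2s_n^2}\phi'(t)S_u^2$, is exactly the mechanism the paper uses --- there it is written directly as the combined factor $-\tfrac{1}{2s_n^2}t\phi(t)(S_u^2-s_{k_n}^2)$, which then becomes $\tfrac{1}{2s_n^2}\phi'(t)\,\overline N_{2,k_n}$ (with $\overline N_{2,k_n}$ built from $s_{k_n}^2$, as required by the martingale definition; the $s_n^2$ in \eqref{EEBeq3-6} is a typo). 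One small imprecision: the binding reason for $\beta<\tfrac16$ is not the replacement of $\widehat Q_{\nu,n}$ by $Q_{\nu,n}$ (that only needs $\beta<\tfrac12$), but the cubic Taylor remainder $(x_u-t)^3=O(n^{3\beta-3/2})$, which must be $o(1/n)$.
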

\begin{lem}\label{EEB-lem13} Under the hypothesis of  Theorem \ref{EEB-thm},  with $k_n= \lfloor n^{\beta}\rfloor$ and $      \max\{\frac{3}{\lambda}, \frac{4}{\eta} \}        <\beta < \frac{1}{6}$, the following assertions hold a.s. as $ n \rightarrow \infty$:
\begin{eqnarray}
% \nonumber to remove numbering (before each equation)
   \label{EEBeq3-8a} & & \overline{W}_{k_n}-W=o(\frac{1}{n}),  \\
    \label{EEBeq3-9a}& &  \overline{N}_{1,k_n}-V_1=o(\frac{1}{\sqrt{n}} ), \\
   \label{EEBeq3-10a}  &&    \overline{N}_{2,k_n}-V_2=o(1),
\end{eqnarray}
where $ \overline{W}_{k_n},\overline{N}_{1,k_n},\overline{N}_{2,k_n}$
 are defined  in  \eqref{EEBeq3-5},  \eqref{EEBeq3-6}.
\end{lem}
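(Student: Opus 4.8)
The plan is to prove all three assertions \eqref{EEBeq3-8a}--\eqref{EEBeq3-10a} by one common scheme. Writing $N_{0,k_n}:=W_{k_n}$ and $V_0:=W$, I would split, for $\nu=0,1,2$,
\[
\overline{N}_{\nu,k_n}-V_\nu=\big(N_{\nu,k_n}-V_\nu\big)+\big(\overline{N}_{\nu,k_n}-N_{\nu,k_n}\big),
\]
isolating the convergence of the untruncated martingale from the truncation tail. The first term is handled by the rate of convergence results: Theorem \ref{EEB-prop} (recalling that $\E m_0^{-\delta}<\infty$ forces $\E(\ln^- m_0)^{1+\lambda}<\infty$ for every $\lambda>0$, so its hypotheses hold) controls $N_{1,k_n}-V_1$ and $N_{2,k_n}-V_2$, while the analogous order-$0$ rate $W_m-W=o(m^{-\lambda+\delta})$, provable by the same martingale argument used for Theorem \ref{EEB-prop} or available from the literature on branching processes in a random environment, controls $W_{k_n}-W$. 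Evaluated at $m=k_n=\lfloor n^\beta\rfloor$ and using $\beta>3/\lambda$, these three rates are respectively $o(n^{-1})$, $o(n^{-1/2})$ and $o(1)$, matching the targets.

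The core of the argument is the truncation tails, which in absolute value are dominated by
\[
T_{\nu,n}=\frac{1}{\Pi_{k_n}}\sum_{u\in\T_{k_n}}|S_u|^{\nu}\,\mathbf{1}_{\{|S_u|>k_n\}},\qquad \nu=0,1,2,
\]
the centering constant $s_{k_n}^2$ in $\overline{N}_{2,k_n}$ being absorbed since on $\{|S_u|>k_n\}$ one has $s_{k_n}^2\lesssim k_n<|S_u|\le S_u^2$. I would first pass to the quenched mean via the many-to-one formula,
\[
\E_\xi T_{\nu,n}=\E_\xi\big[\,|\widetilde S_{k_n}|^{\nu}\,\mathbf{1}_{\{|\widetilde S_{k_n}|>k_n\}}\big],
\]
where $\widetilde S_{k_n}=\sum_{i=0}^{k_n-1}\widetilde L_i$ is a sum of independent centered increments of laws $G(\xi_0),\dots,G(\xi_{k_n-1})$; then, bounding $\mathbf{1}_{\{|\widetilde S_{k_n}|>k_n\}}\le (|\widetilde S_{k_n}|/k_n)^{\eta-\nu}$ and invoking a Rosenthal (Marcinkiewicz--Zygmund) inequality $\E_\xi|\widetilde S_{k_n}|^{\eta}\le K_\xi\,k_n^{\eta/2}$ valid for $\eta>2$, I obtain $\E_\xi T_{\nu,n}\le K_\xi\,k_n^{\nu-\eta/2}$.

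Taking annealed expectations, the environment constant $K_\xi$ is integrable uniformly in $n$: the Rosenthal bound produces the two averages $k_n^{-1}\sum_{i<k_n}\E_\xi|\widetilde L_i|^{\eta}$ and $(s_{k_n}^2/k_n)^{\eta/2}$, whose $\E$ are each at most $\E|\widehat L_0|^{\eta}<\infty$ by \eqref{EEB-eq2.3} and Jensen's inequality, using the i.i.d. structure of the environment. Hence $\E T_{\nu,n}\lesssim k_n^{\nu-\eta/2}\asymp n^{\beta(\nu-\eta/2)}$. To upgrade the mean bounds to the required a.s. rates I would check that the weighted series
\[
\sum_{n}n\,\E T_{0,n},\qquad \sum_{n}\sqrt{n}\,\E T_{1,n},\qquad \sum_{n}\E T_{2,n}
\]
converge: their general terms are of order $n^{1-\beta\eta/2}$, $n^{1/2+\beta(1-\eta/2)}$ and $n^{\beta(2-\eta/2)}$, and the hypotheses $\beta>4/\eta$ (so $\beta\eta>4$) and $\eta>24$ make all three exponents strictly less than $-1$. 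By the monotone convergence theorem the nonnegative sums $\sum_n nT_{0,n}$, $\sum_n\sqrt{n}\,T_{1,n}$, $\sum_n T_{2,n}$ are then finite a.s., forcing $nT_{0,n}\to0$, $\sqrt{n}\,T_{1,n}\to0$ and $T_{2,n}\to0$; combined with the first paragraph this yields \eqref{EEBeq3-8a}--\eqref{EEBeq3-10a}.

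I expect the main difficulty to be twofold. The most demanding estimate is \eqref{EEBeq3-8a}, which asks for the fastest relative decay $o(1/n)$: this needs the sharp order-$0$ rate $W_m-W=o(m^{-\lambda+\delta})$ to be genuinely available under the present moment hypotheses, and the slack encoded in $\beta\lambda>3$ must be carefully tracked through the evaluation at $m=k_n$. The second delicate point is the uniform-in-$n$ integrability of the Rosenthal constant $K_\xi$: one must verify that $\E(s_{k_n}^2/k_n)^{\eta/2}$ and $\E\big[k_n^{-1}\sum_{i<k_n}\E_\xi|\widetilde L_i|^{\eta}\big]$ stay bounded, which is exactly where hypothesis \eqref{EEB-eq2.3}, Jensen's inequality, and the law of large numbers for $s_n^{(\nu)}$ enter and control the environment-dependent constants.
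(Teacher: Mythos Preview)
Your proposal is correct and follows essentially the same scheme as the paper. The paper does not spell out the proof of Lemma~\ref{EEB-lem13} but only refers to the analogous third-order Lemma~\ref{EEB-lem3}; there the argument is exactly your decomposition $\overline{N}_{\nu,k_n}-V_\nu=(N_{\nu,k_n}-V_\nu)+(\overline{N}_{\nu,k_n}-N_{\nu,k_n})$, with the first piece handled by Theorem~\ref{EEB-prop} (and, for $\nu=0$, by the cited result $W_n-W=o(n^{-\lambda})$ from \cite{HuangLiu}, stated here as Lemma~\ref{EEB-lem5}), and the second piece handled by showing that the annealed series $\sum_n n^{a}\,\E\bigl[\Pi_{k_n}^{-1}\sum_{u\in\T_{k_n}}|S_u|^{\nu}\mathbf{1}_{\{|S_u|>k_n\}}\bigr]$ converges via the many-to-one identity and the moment bound $\E|\widehat S_{k_n}|^{\eta}\le k_n^{\eta/2}\,\E|\widehat L_0|^{\eta}$.

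The only stylistic difference is that the paper takes the \emph{annealed} expectation from the outset, using the deterministic Marcinkiewicz--Zygmund constant and the i.i.d.\ structure of $(\xi_n)$ to write $\E|\widehat S_{k_n}|^{\eta}\le C_\eta\, k_n^{\eta/2-1}\sum_{j<k_n}\E|\widehat L_j|^{\eta}=C_\eta\, k_n^{\eta/2}\,\E|\widehat L_0|^{\eta}$; this sidesteps your detour through the quenched bound and the integrability check for the environment-dependent Rosenthal constant $K_\xi$. Your route is fine too---the Jensen arguments you sketch for $\E[(s_{k_n}^2/k_n)^{\eta/2}]$ and $\E[k_n^{-1}\sum_i\E_\xi|\widetilde L_i|^{\eta}]$ are valid---but the direct annealed computation is shorter and removes what you flagged as the ``second delicate point''.
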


While in the proof of Theorem \ref{EEB-thm2}, we shall take $k_n= \lfloor n^{\beta}\rfloor$ with $\max\{\frac{4}{\lambda}, \frac{5}{\eta} \}<\beta < \frac{1}{8}$.
We still use  the decomposition \eqref{EEBeq3-1},   and  divide the proof of \eqref{EEB-eq2.8}   into three lemmas.
\begin{lem}\label{EEB-lem1}
Under the hypothesis of  Theorem \ref{EEB-thm2}, with $k_n= \lfloor n^{\beta}\rfloor$ and  $\max\{\frac{4}{\lambda}, \frac{5}{\eta} \}<\beta < \frac{1}{8}$, we have
\begin{equation}\label{EEBeq3-3}
{n^{3/2}}\mathbb{A}_n \xrightarrow{n \rightarrow \infty } 0 \quad  \mbox{ a.s.}
\end{equation}
\end{lem}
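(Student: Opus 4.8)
The plan is to exploit the conditional independence built into the decomposition \eqref{EEBeq3-1}, exactly as in the proof of Lemma \ref{EEB-lem11}, but pushing the moment estimates one scale further to accommodate the factor $n^{3/2}$ in place of $n$. First I would note that the positions $S_u$ with $u\in\T_{k_n}$ are $\D_{k_n}$-measurable, so that
\[
\mathbb{A}_n=\frac{1}{\Pi_{k_n}}\sum_{u\in\T_{k_n}}D_u,\qquad D_u:=W_{n-k_n}(u,s_nt-S_u)-\E_{\xi,k_n}W_{n-k_n}(u,s_nt-S_u),
\]
is, conditionally on $\D_{k_n}$ (equivalently under $\E_{\xi,k_n}$), a sum of independent and centered random variables, since the shifted subtrees rooted at distinct $u\in\T_{k_n}$ evolve independently given the environment. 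Each $D_u$ is bounded by $W_{n-k_n}(u,\R)+1$ in absolute value, and $\E_{\xi,k_n}|D_u|\le 2$ because $\E_{\xi,k_n}W_{n-k_n}(u,\R)=1$.

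Next I would estimate a conditional $p$-th moment of $\mathbb{A}_n$ for a suitable $p>1$. The gain comes from the normalization: summing the $Z_{k_n}(\R)=\Pi_{k_n}W_{k_n}$ conditionally independent centered terms and dividing by $\Pi_{k_n}$ produces, after a von Bahr--Esseen inequality (for $p\in(1,2]$) or a Rosenthal inequality (for larger $p$), a bound of the form
\[
\E_{\xi,k_n}\bigl|\mathbb{A}_n\bigr|^{p}\ \lesssim\ \Pi_{k_n}^{-(p-1)}\,W_{k_n}\,C_{\theta^{k_n}\xi},
\]
where $C_{\theta^{k_n}\xi}$ collects the (shifted) $p$-th moment of the subtree population. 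Since $\E\ln m_0>0$, the law of large numbers gives $\Pi_{k_n}=e^{k_n(\E\ln m_0+o(1))}$, which grows faster than any power of $n$ because $k_n=\lfloor n^\beta\rfloor\to\infty$; hence $\Pi_{k_n}^{-(p-1)}$, and therefore $\E_{\xi,k_n}|n^{3/2}\mathbb{A}_n|^p$, is $o(n^{-M})$ for every $M$. A conditional Markov inequality together with the Borel--Cantelli lemma would then yield $n^{3/2}\mathbb{A}_n\to 0$ a.s.

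The difficulty is that under the hypotheses the offspring law possesses only the $L(\log L)^{1+\lambda}$ moment of \eqref{EEB-eq2.1} and no finite moment of order $p>1$, so $\E_{\xi,k_n}W_{n-k_n}(u,\R)^p=\infty$ and the constant $C_{\theta^{k_n}\xi}$ above is not directly available. I would therefore truncate, splitting $D_u$ into a bounded part (obtained by cutting the subtree population, or the offspring numbers, at a threshold tied to $k_n$) and a remainder. The bounded part is handled by the moment inequality just described, its contribution being negligible thanks to the geometric smallness of $\Pi_{k_n}^{-(p-1)}$. The remainder is controlled only in $L^1$: its conditional mean is a tail expectation of the subtree population, estimated via \eqref{EEB-eq2.1} (with $\lambda>32$) for the heavy-offspring part and via the displacement moment in \eqref{EEB-eq2.3} (with $\eta>40$) for the spatial tail $\{|S_u|>k_n\}$. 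Multiplying by $n^{3/2}$ and summing in $n$, the two lower bounds $\beta>4/\lambda$ and $\beta>5/\eta$ are precisely what make these error series converge, after which Borel--Cantelli again gives almost sure convergence; the upper constraint $\beta<1/8$ plays no role here, being reserved for the Edgeworth analysis of $\mathbb{B}_n$.

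The main obstacle I anticipate is the balancing of the truncation: choosing the threshold and the exponent $p$ so that simultaneously (i) the bounded part's moment bound holds and stays super-polynomially small, and (ii) the $L^1$ truncation error, which is only logarithmically small in the threshold under the $L(\log L)^{1+\lambda}$ condition, still decays fast enough after the $n^{3/2}$ scaling. A secondary technical point is the uniform control, along the deterministic subsequence $k_n=\lfloor n^\beta\rfloor$, of the environment-dependent quantities such as $C_{\theta^{k_n}\xi}$ and $W_{k_n}$; here I would invoke stationarity and ergodicity of $\xi$ to ensure they grow at most subexponentially and hence do not spoil the geometric decay supplied by $\Pi_{k_n}^{-(p-1)}$.
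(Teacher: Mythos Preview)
Your overall architecture---truncate, bound the truncated part by a moment inequality, control the remainder in $L^1$, then apply Borel--Cantelli---is exactly the paper's route. Two points deserve correction, however.

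First, the displacement hypothesis plays no role here. Each $D_u$ satisfies $|D_u|\le W_{n-k_n}(u,\R)+1$ uniformly in $S_u$, so no spatial truncation $\{|S_u|>k_n\}$ is needed, and the constraint $\beta>5/\eta$ is \emph{not} used in this lemma; it enters only in the analysis of $\mathbb{B}_{n1}$ (the spatial tail of the $\mathbb{B}_n$ decomposition). The sole lower bound on $\beta$ that matters for $\mathbb{A}_n$ is $\lambda\beta>4$, which makes the series $\sum n^{3}k_n^{-\lambda}$ (arising from Step~2 below) and $\sum n^{3/2}k_n^{-\lambda}$ converge.

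Second, the ``secondary technical point'' you flag---uniform control of the shifted moments $C_{\theta^{k_n}\xi}$ along the subsequence $k_n$---is in fact the crux, and your proposed remedy via ergodicity is too vague. The paper resolves it cleanly by introducing the maximal variable $W^*=\sup_nW_n$ and invoking the moment bound $\E(W^*+1)(\ln(W^*+1))^{\lambda}<\infty$ (which holds under \eqref{EEB-eq2.1} together with $\E m_0^{-\delta}<\infty$). Concretely, the paper truncates $X_{n,u}:=D_u$ at the level $\Pi_{k_n}$, i.e.\ sets $\bar X_{n,u}=X_{n,u}\mathbf{1}_{\{|X_{n,u}|<\Pi_{k_n}\}}$, and then: (i) bounds $\P_{k_n}(\mathbb{A}_n\neq\bar{\mathbb{A}}_n)$ by $W_{k_n}\,\P(W^*+1\ge\Pi_{k_n})\lesssim W^*(\ln\Pi_{k_n})^{-\lambda}$; (ii) bounds the truncated second moment $\E_{k_n}\bar X_{n,u}^2$ by $\int_0^{\Pi_{k_n}}x\,\P(W^*+1>x)\,dx$, again controlled through the $L(\log L)^\lambda$ moment of $W^*$, and applies Chebyshev; (iii) bounds $|\E_{\xi,k_n}\bar{\mathbb{A}}_n|$ by the tail $\E(W^*+1)\mathbf{1}_{\{W^*+1\ge\Pi_{k_n}\}}$. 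So rather than a general exponent $p$ with von Bahr--Esseen or Rosenthal, the argument runs with $p=2$ throughout, and the single object $W^*$ replaces all the generation-dependent constants you were worried about.
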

\begin{lem}\label{EEB-lem2} Under the hypothesis of  Theorem \ref{EEB-thm2}, with $k_n= \lfloor n^{\beta}\rfloor$ and  $\max\{\frac{4}{\lambda}, \frac{5}{\eta} \}<\beta < \frac{1}{8}$, the following holds a.s. as $ n \rightarrow \infty$:
\begin{multline}\label{EEBeq3-4}
 \mathbb{B}_n =\Bigg (\Phi(t)+\sum_{\nu=1}^{3}\frac{  Q_{\nu,n}(t)}{n^{\nu/2}} \Bigg) \overline{W}_{k_n}+\Big(-\frac{1}{s_n}\Big)\Bigg( \phi(t)+ \sum_{\nu=1}^{2}\frac{  Q'_{\nu,n}(t)}{n^{\nu/2}}\Bigg)\overline{N}_{1,k_n}\\ +\frac{1}{2!}\frac{1}{s_n^2} \Bigg( \phi'(t) + \frac{Q''_{1,n}(t)}{n^{1/2}}\Bigg)\overline{N}_{2,k_n} + \frac{1}{3!}(-\frac{1}{s_n^3}) \phi''(t)\overline{N}_{3,k_n}   +o\big(\frac{1}{n^{3/2}}\big),
\end{multline}
\end{lem}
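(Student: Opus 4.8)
The plan is to evaluate $\mathbb{B}_n$ from \eqref{EEBeq3-1} term by term. First I would use the branching property: conditionally on $\D_{k_n}$, the subtree rooted at a particle $u\in\T_{k_n}$ is, under $\P_\xi$, a branching random walk driven by the shifted environment $\theta^{k_n}\xi$, so that $\E_{\xi,k_n}W_{n-k_n}(u,y)$ equals the distribution function at $y$ of the sum $\widehat L_{k_n}+\cdots+\widehat L_{n-1}$ of the $n-k_n$ displacements made after generation $k_n$. Denote this function by $\widetilde F_n$; its variance is $\widetilde s_n^2:=s_n^2-s_{k_n}^2\sim s_n^2$, and
\[
\mathbb{B}_n=\frac{1}{\Pi_{k_n}}\sum_{u\in\T_{k_n}}\widetilde F_n(s_nt-S_u).
\]
I would apply the Edgeworth expansion of Lemma \ref{lem-Edge-exp} to $\widetilde F_n$ up to order three; the Cram\'er condition in \eqref{EEB-eq2.3} together with the moment hypothesis $\eta>40$ makes the remainder $o(n^{-3/2})$ uniformly in the argument, so that $\widetilde F_n(y)=G_n(y)+o(n^{-3/2})$ with $G_n(y)=\Phi(y/\widetilde s_n)+\sum_{\nu=1}^{3}\widetilde Q_{\nu,n}(y/\widetilde s_n)(n-k_n)^{-\nu/2}$, the $\widetilde Q_{\nu,n}$ being built from the cumulants of the post-$k_n$ displacements.

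Next I would Taylor-expand the smooth part about the point $s_nt$, keeping terms up to the cubic one:
\[
G_n(s_nt-S_u)=\sum_{j=0}^{3}\frac{(-S_u)^{j}}{j!}\,G_n^{(j)}(s_nt)+\frac{S_u^{4}}{4!}\,G_n^{(4)}(\zeta_u).
\]
Summing over $u$ and invoking the defining relations \eqref{EEBeq3-5}--\eqref{EEBeq3-7}, which express $\frac{1}{\Pi_{k_n}}\sum_{u}S_u^{j}\mathbf 1_{\{|S_u|\le k_n\}}$ through $\overline W_{k_n},\overline N_{1,k_n},\overline N_{2,k_n},\overline N_{3,k_n}$ --- explicitly $\frac{1}{\Pi_{k_n}}\sum_u S_u^2\mathbf 1_{\{|S_u|\le k_n\}}=\overline N_{2,k_n}+s_{k_n}^{(2)}\overline W_{k_n}$ and $\frac{1}{\Pi_{k_n}}\sum_u S_u^3\mathbf 1_{\{|S_u|\le k_n\}}=\overline N_{3,k_n}+3s_{k_n}^{(2)}\overline N_{1,k_n}+s_{k_n}^{(3)}\overline W_{k_n}$ --- I would rewrite $\mathbb{B}_n$ as $\sum_{i=0}^{3}c_{i,n}\,\overline N_{i,k_n}$, with the convention $\overline N_{0,k_n}:=\overline W_{k_n}$. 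The part of the sum over $\{|S_u|>k_n\}$ is treated separately and shown to be $o(n^{-3/2})$.

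The heart of the proof is identifying the coefficients $c_{i,n}$, for which I would use block independence. The coefficient of $\overline W_{k_n}$ is $\sum_{j=0}^{3}\frac{(-1)^{j}}{j!}\mu_j\,G_n^{(j)}(s_nt)+o(n^{-3/2})$, where $\mu_j$ is the $j$-th moment of the sum of the first $k_n$ displacements ($\mu_0=1$, $\mu_1=0$, $\mu_2=s_{k_n}^{(2)}$, $\mu_3=s_{k_n}^{(3)}$); this is exactly the Taylor truncation of the convolution of the post-$k_n$ law with the first-$k_n$ law, hence equals $\mathcal F_n(s_nt)$, where $\mathcal F_n$ is the distribution function of the full sum $\widehat L_0+\cdots+\widehat L_{n-1}$ of variance $s_n^2$. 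Applying Lemma \ref{lem-Edge-exp} once more, now to this full sum, converts it into $\Phi(t)+\sum_{\nu=1}^{3}Q_{\nu,n}(t)/n^{\nu/2}+o(n^{-3/2})$. The same mechanism gives $c_{i,n}=\frac{(-1)^{i}}{i!}\big[\frac{d^{i}}{dx^{i}}\mathcal F_n(x)\big]_{x=s_nt}=\frac{1}{i!}\big(-\tfrac{1}{s_n}\big)^{i}\big(\phi^{(i-1)}(t)+\cdots\big)$ for $i=1,2,3$; differentiating the full Edgeworth expansion reproduces precisely the bracketed factors $-\frac{1}{s_n}\big(\phi+\sum_{\nu=1}^{2}Q'_{\nu,n}/n^{\nu/2}\big)$, $\frac{1}{2!\,s_n^{2}}\big(\phi'+Q''_{1,n}/n^{1/2}\big)$ and $\frac{1}{3!}\big(-\frac{1}{s_n^{3}}\big)\phi''$ of \eqref{EEBeq3-4}, where on each $\overline N_{i,k_n}$ one keeps only those corrections whose order, multiplied by the size of $\overline N_{i,k_n}$, is not $o(n^{-3/2})$; in particular $\overline N_{3,k_n}$ carries no correction.

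Finally I would check that every discarded quantity is $o(n^{-3/2})$. The decisive one is the cubic Taylor remainder: on $\{|S_u|\le k_n\}$ one has $|S_u|^{4}\le k_n^{4}$ and $|G_n^{(4)}|=O(n^{-2})$, so $\frac{1}{\Pi_{k_n}}\sum_u|S_u|^{4}\,|G_n^{(4)}(\zeta_u)|=O(n^{4\beta-2})$, which is $o(n^{-3/2})$ exactly when $\beta<1/8$; this is what pins down the upper bound on $\beta$ (the same computation at second order forces $\beta<1/6$ in Lemma \ref{EEB-lem12}). The truncation of the convolution series beyond $j=3$ costs only $O(n^{2\beta-2})$, the two Edgeworth remainders are $o(n^{-3/2})$ by \eqref{EEB-eq2.3} and $\eta>40$, and the contribution of $\{|S_u|>k_n\}$ is $o(n^{-3/2})$ by a Markov bound on $\frac{1}{\Pi_{k_n}}\sum_u|S_u|^{\eta'}$ that uses $\beta>5/\eta$. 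The step I expect to be the real obstacle is the coefficient identification above: one must verify that the partial-cumulant polynomials $\widetilde Q_{\nu,n}$ of the post-$k_n$ block, once convolved with the first-$k_n$ moments, reassemble exactly into the full-cumulant polynomials $Q_{\nu,n}$ and their derivatives. This is an exact identity rooted in the additivity of cumulants and the recursion of the Chebyshev--Hermite polynomials, but carrying it through to third order --- where the new martingale $\overline N_{3,k_n}$ and the polynomials $H_4,H_6,H_8$ entering $Q_{3,n}$ first appear --- is considerably heavier than the second-order computation in \cite{GL14}.
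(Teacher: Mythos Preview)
Your outline is correct in its large architecture and matches the paper's: split off the event $\{|S_u|>k_n\}$, replace $\E_{\xi,k_n}W_{n-k_n}(u,\cdot)$ by the post-$k_n$ Edgeworth polynomial via Lemma~\ref{lem-Edge-exp}, Taylor-expand to third order, and read off the coefficients of $\overline W_{k_n},\overline N_{1,k_n},\overline N_{2,k_n},\overline N_{3,k_n}$.  The remainder bounds you give (in particular the $O(n^{4\beta-2})$ quartic term forcing $\beta<1/8$, and the Markov bound on $\{|S_u|>k_n\}$ using $\beta>5/\eta$) are exactly the ones the paper uses.

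The genuine difference is in how the coefficients are identified.  The paper Taylor-expands in the \emph{scaled} variable around $t$: it writes $\frac{s_nt-S_u}{(s_n^2-s_{k_n}^2)^{1/2}}-t$ as a sum of explicit small terms, expands each of $\Phi,\kappa_{1,n}D_1,\kappa_{2,n}D_2,\kappa_{3,n}D_3,R_n$ separately, simultaneously expands the post-block coefficients $\kappa_{j,n}$ in terms of the full-walk cumulants, and then collapses everything using the Hermite recursion $H_{m+1}(t)=tH_m(t)-mH_{m-1}(t)$.  Your route --- expand $G_n$ in the unscaled variable about $s_nt$ and recognise the coefficient of $\overline W_{k_n}$ as a truncated convolution that, by a \emph{second} application of Lemma~\ref{lem-Edge-exp} to the full sum, equals $\Phi(t)+\sum_{\nu=1}^3 Q_{\nu,n}(t)/n^{\nu/2}$ --- is a legitimate and conceptually cleaner alternative for $c_{0,n}$, and it explains neatly why the full-walk $Q_{\nu,n}$ must appear.

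Where you should be careful is the step ``the same mechanism gives $c_{i,n}=\frac{(-1)^i}{i!}\bigl[\frac{d^i}{dx^i}\mathcal F_n(x)\bigr]_{x=s_nt}$'' for $i\ge 1$.  Lemma~\ref{lem-Edge-exp} controls only the distribution function, not its derivatives, and $\mathcal F_n$ need not be differentiable at all; so the convolution shortcut is only a heuristic here.  What one actually has is the explicit expression $c_{1,n}=-G_n'(s_nt)-\tfrac{s_{k_n}^{(2)}}{2}G_n'''(s_nt)$, $c_{2,n}=\tfrac12 G_n''(s_nt)$, $c_{3,n}=-\tfrac16 G_n'''(s_nt)$, and one must show by hand that these equal the derivatives of the full Edgeworth polynomial up to $o(n^{-3/2})$.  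That verification is precisely the Hermite bookkeeping the paper carries out term by term (and which you correctly flag as the real obstacle); your convolution picture does not bypass it, though it makes the target of the computation transparent.
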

\begin{lem}\label{EEB-lem3} Under the hypothesis of  Theorem \ref{EEB-thm2}, with $k_n= \lfloor n^{\beta}\rfloor$ and  $\max\{\frac{4}{\lambda}, \frac{5}{\eta} \}<\beta < \frac{1}{8}$,  the following assertions hold a.s. as $ n \rightarrow \infty$:
\begin{eqnarray}
% \nonumber to remove numbering (before each equation)
   \label{EEBeq3-8} & & \overline{W}_{k_n}-W=o(\frac{1}{n^{3/2}}),  \\
    \label{EEBeq3-9}& &  \overline{N}_{1,k_n}-V_1=o( \frac{1}{n}),  \\
   \label{EEBeq3-10}  &&    \overline{N}_{2,k_n}-V_2=o(\frac{1}{\sqrt{n}}), \\
   \label{EEBeq3-11}  &&    \overline{N}_{3,k_n}-V_3=o(1),
\end{eqnarray}
where $ \overline{W}_{k_n},\overline{N}_{1,k_n},\overline{N}_{2,k_n},\overline{N}_{3,k_n}$
 are defined  in  \eqref{EEBeq3-5},  \eqref{EEBeq3-6} \eqref{EEBeq3-7}.\end{lem}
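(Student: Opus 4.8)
The plan is to handle the four convergences \eqref{EEBeq3-8}--\eqref{EEBeq3-11} by a single two-step scheme, following but strengthening the argument behind the second-order analogue, Lemma~\ref{EEB-lem13}. Writing $g_0\equiv 1$, $g_1(x)=x$, $g_2(x)=x^2-s_{k_n}^2$, $g_3(x)=x^3-3xs_{k_n}^2-s_{k_n}^{(3)}$ for the summands defining the martingales $N_{\nu,n}$ at generation $n=k_n$ (with the convention $N_{0,k_n}=W_{k_n}$, $V_0=W$), I would split each truncated quantity into the corresponding untruncated martingale minus a truncation remainder,
\[
  \overline{N}_{\nu,k_n}-V_\nu=\bigl(N_{\nu,k_n}-V_\nu\bigr)-R_{\nu,n},\qquad R_{\nu,n}=\frac{1}{\Pi_{k_n}}\sum_{u\in\T_{k_n}}g_\nu(S_u)\,\mathbf{1}_{\{|S_u|>k_n\}}.
\]
The proof then reduces to two independent tasks: bounding the martingale deviations $N_{\nu,k_n}-V_\nu$ and the tails $R_{\nu,n}$, each at the prescribed almost-sure rate $o\bigl(n^{-(3-\nu)/2}\bigr)$.

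The first task is immediate from the convergence rates already in hand. For $\nu=1,2,3$, Theorem~\ref{EEB-prop}, part $(\nu)$, gives $N_{\nu,k_n}-V_\nu=o\bigl(k_n^{-\lambda+\nu+\delta}\bigr)=o\bigl(n^{-\beta(\lambda-\nu-\delta)}\bigr)$ for every $\delta>0$; for $\nu=0$ I would invoke the convergence rate of the fundamental martingale $W_{k_n}-W=o\bigl(k_n^{-\lambda+\delta}\bigr)$, valid under \eqref{EEB-eq2.1} (it follows from the same martingale technique that yields Theorem~\ref{EEB-prop}). Since $\beta\lambda>4$ and $\beta\nu<3/8$, the resulting exponents exceed each of the targets $3/2,1,1/2,0$, so the martingale part of every line already holds with room to spare.

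The heart of the matter is the almost-sure control of the tails $R_{\nu,n}$, and here the moment hypothesis $\eta>40$ in \eqref{EEB-eq2.3} enters decisively. I would first pass to quenched expectations via the many-to-one identity $\E_\xi\bigl[\Pi_{k_n}^{-1}\sum_{u\in\T_{k_n}}f(S_u)\bigr]=\E_\xi f(\widehat{S}_{k_n})$, where $\widehat{S}_{k_n}=\sum_{k=0}^{k_n-1}\widehat{L}_k$ is the associated random walk with $\E_\xi\widehat{S}_{k_n}^2=s_{k_n}^2\asymp k_n$. Splitting $g_\nu$ into monomials $|x|^j$, $j\le\nu$, whose coefficients are powers of $s_{k_n}^2\asymp k_n$ and $s_{k_n}^{(3)}\asymp k_n$, then using $\mathbf{1}_{\{|x|>k_n\}}\le(|x|/k_n)^{\eta-j}$ together with a Rosenthal-type bound $\E|\widehat{S}_{k_n}|^\eta\le C\,k_n^{\eta/2}$ (legitimate since $\E(\sigma_0^{(2)})^{\eta/2}\le\E|\widehat{L}_0|^\eta<\infty$), a further application of $\E$ yields $\E|R_{\nu,n}|\le C\,k_n^{\nu-\eta/2}=C\,n^{\beta(\nu-\eta/2)}$. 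A Markov inequality and the Borel--Cantelli lemma then upgrade this to $R_{\nu,n}=o\bigl(n^{-(3-\nu)/2}\bigr)$ a.s., provided $\sum_n n^{\beta(\nu-\eta/2)+(3-\nu)/2}<\infty$; the most binding case is $\nu=0$, where summability demands exactly $\beta\eta>5$, i.e. $\beta>5/\eta$. Combining the two tasks gives \eqref{EEBeq3-8}--\eqref{EEBeq3-11}.

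The main obstacle I anticipate is precisely this quantitative Borel--Cantelli step: the summability is tight and must be verified degree by degree for $\nu=0,1,2,3$, carefully tracking the subleading monomials of $g_2$ and $g_3$ weighted by $s_{k_n}^2$ and $s_{k_n}^{(3)}$; it is this bookkeeping that pins down the admissible window $\max\{4/\lambda,5/\eta\}<\beta<1/8$ and the threshold $\eta>40$ (within this lemma the constraint $\beta>5/\eta$ is the operative one, the remaining bounds $\beta>4/\lambda$ and $\beta<1/8$ being inherited from the companion Lemmas~\ref{EEB-lem1} and~\ref{EEB-lem2}). A secondary but essential point is that the whole scheme now relies on Theorem~\ref{EEB-prop}, part $(3)$, for the genuinely new martingale $N_{3,n}$, whose rate is established only in Section~\ref{EEB-sec5}; granting it, the treatment of $\overline{N}_{3,k_n}$ runs exactly parallel to that of $\overline{N}_{1,k_n}$ and $\overline{N}_{2,k_n}$.
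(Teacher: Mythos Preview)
Your proposal is correct and follows essentially the same route as the paper: both split each $\overline{N}_{\nu,k_n}-V_\nu$ into the martingale deviation $N_{\nu,k_n}-V_\nu$, handled by Theorem~\ref{EEB-prop} (and, for $\nu=0$, by the cited rate $W-W_n=o(n^{-\lambda})$ of Lemma~\ref{EEB-lem5}), plus the truncation remainder, which is controlled by the many-to-one identity, the moment bound $\E|\widehat{S}_{k_n}|^{\eta}=O(k_n^{\eta/2})$, and summability under $\beta\eta>5$. The only cosmetic difference is that the paper deduces a.s.\ negligibility of the remainders from $\E\sum_n n^{(3-\nu)/2}|R_{\nu,n}|<\infty$ rather than via Markov plus Borel--Cantelli, and it has already established the $\nu=0$ case as \eqref{EEB-eq3.13} inside the proof of Lemma~\ref{EEB-lem2}.
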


To avoid  repetition, here we shall only present  the proofs  of Lemmas \ref{EEB-lem1}, \ref{EEB-lem2} and \ref{EEB-lem3};  similar arguments apply  to Lemmas \ref{EEB-lem11}, \ref{EEB-lem12} and \ref{EEB-lem13}.

\subsection{Proofs of Lemmas  \ref{EEB-lem1}, \ref{EEB-lem2} and \ref{EEB-lem3}  }

%The proof of Lemma  \ref{EEB-lem1}  follows the same lines as  in \cite{GL14} \cite{GLW14}.  For the reader's convenience, we give the details.
\begin{proof}[Proof of Lemma \ref{EEB-lem1}]
The proof  is similar to that of Lemma 5.1 in \cite{GL14}.
For ease of  notation, we will denote by  $ [f(x)]_{x=a}$   the value of a function $f(x)$ at  the point $a$,    and  define for $|u|=k_n$,
\begin{align*}
    &  X_{n,u}= W_{n-k_n}(u,s_n t-S_u) - \E_{\xi, k_n}W_{n-k_n}(u,s_n t-S_u), ~~\bar{X}_{n,u} =  X_{n,u} \mathbf{1}_{\{|X_{n,u}| <\Pi_{k_n}\}}, \\
     & \bar{\mathbb{A}}_{n} =   \frac{1}{\Pi_{k_n}} \sum_{u\in \T_{k_n} } \bar{X}_{n,u}.
\end{align*}
Then we see that $ |X_{n,u}|\leq W_{n-k_n}(u)+1$.

To prove Lemma \ref{EEB-lem1}, we will use the extended Borel-Cantelli Lemma. We can obtain the required result once we prove that $\forall \varepsilon>0$,
\begin{equation}\label{eq10}
  \sum_{n=1}^{\infty} \P_{k_n} (|{n^{3/2}} \mathbb{A}_n| >2 \varepsilon) <\infty.
\end{equation}
Notice that \begin{eqnarray*}
              && \P_{k_n}(|\mathbb{A}_n| >\frac{2\varepsilon}{{n}^{3/2}})    \\
              &\leq&  \P_{k_n} (\mathbb{A}_n\neq \bar{\mathbb{A}}_n ) +\P_{k_n} (|\bar{\mathbb{A}}_n - \E_{\xi, k_n} \bar{\mathbb{A}}_n| > \frac{\varepsilon}{ {n}^{3/2}} )  +\P_{k_n} ( |\E_{\xi, k_n} \bar{\mathbb{A}}_n| >\frac{\varepsilon}{ {n}^{3/2}}).
            \end{eqnarray*}
Then we can  proceed  the proof  in  3 steps.\\
{\bf Step 1 } We first prove that
\begin{equation}\label{cbrweq3-7}
 \sum_{n=1}^{\infty}{\P}_{k_n} (\mathbb{A}_n\neq \overline{\mathbb{A}}_n) <\infty.
\end{equation}
To this end,  define  $$W^*=\sup_n W_n.$$
We need the following result  on $W^*$.
\begin{lem}[\cite{LiangLiu10}, Theorem 1.2] \label{lem5}
  Assume    \eqref{EEB-eq2.1}  for some $\lambda>0$  and $\E m_0^{-\delta}<\infty$ for some $\delta>0$.  Then
  \begin{equation}\label{cbrweq17a}
    {\E}(W^*+1)(\ln (W^*+1))^{\lambda} <\infty.
\end{equation}\end{lem}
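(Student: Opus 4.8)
The plan is to avoid a direct maximal-inequality argument and to exploit instead the self-similar branching structure of $W^{*}$. A Doob-type maximal inequality is of no use here: the Orlicz function $\Psi(x)=(x+1)\bigl(\ln(x+1)\bigr)^{\lambda}$ is too close to linear (its lower Matuszewska index equals one), so controlling $\E\Psi(W^{*})$ by $\E\Psi(W)$ would cost a logarithmic factor and hence demand a reproduction moment of order $(\ln^{+})^{\lambda+2}$ rather than the $(\ln^{+})^{1+\lambda}$ provided by \eqref{EEB-eq2.1}. The starting point is therefore a one-generation recursion. Writing the children of the root as $1,\dots,\widehat N_{0}$ and letting $W^{*(i)}=\sup_{n\ge 0}W_{n}^{(i)}$ be the martingale maximum of the subtree rooted at the $i$-th child (which evolves in the shifted environment $\theta\xi$), the identity $W_{n}=m_{0}^{-1}\sum_{i=1}^{\widehat N_{0}}W_{n-1}^{(i)}$ for $n\ge 1$ together with $W_{0}=1$ yields, after taking the supremum over $n$,
\begin{equation*}
W^{*}\ \le\ 1+\frac{1}{m_{0}}\sum_{i=1}^{\widehat N_{0}}W^{*(i)},
\end{equation*}
where, under $\P_{\xi}$ and given $\widehat N_{0}$, the variables $W^{*(i)}$ are i.i.d. copies of $W^{*}$ under $\P_{\theta\xi}$ and are independent of $(m_{0},\widehat N_{0})$. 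Thus $W^{*}$ is a subsolution of the same smoothing transform that fixes $W$, and I would attack its weighted moment with the machinery developed for such fixed points.

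Since $\Psi$ is nondecreasing and convex, applying it to the recursion and taking $\E_{\xi}$ reduces matters to estimating $\E_{\xi}\Psi\bigl(m_{0}^{-1}\sum_{i}W^{*(i)}\bigr)$. I would split the inner sum into its conditional mean $m_{0}^{-1}\widehat N_{0}\,\mu(\theta\xi)$, with $\mu(\theta\xi)=\E_{\theta\xi}W^{*}$, and the centred part $m_{0}^{-1}\sum_{i}\bigl(W^{*(i)}-\mu(\theta\xi)\bigr)$. For the centred part I would invoke a moment inequality for weighted sums of conditionally i.i.d. nonnegative variables tailored to the Orlicz function $\Psi$ (of Topchii--Vatutin / Burkholder type), which controls its $\Psi$-moment by the individual moments $\E_{\theta\xi}\Psi(m_{0}^{-1}W^{*})$; for the mean part I would use the $\Delta_{2}$ and scaling properties of $\Psi$ to reduce to $\E\,\Psi(m_{0}^{-1}\widehat N_{0})$, the factor $\mu(\theta\xi)$ being kept bounded through the truncation described below. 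The resulting ``immediate'' contribution $\E\,\Psi(m_{0}^{-1}\widehat N_{0})$ is finite precisely by \eqref{EEB-eq2.1} together with $\E m_{0}^{-\delta}<\infty$ (which, as noted in the text, makes all moments of $\ln^{-}m_{0}$ finite), the surplus logarithmic power in \eqref{EEB-eq2.1} being absorbed by the error terms accumulated over generations.

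The crux, and the main obstacle, is that the mean term is exactly critical: $\E_{\xi}\bigl(m_{0}^{-1}\widehat N_{0}\bigr)=1$, so a one-step estimate produces no contraction and one cannot simply solve for $\E\Psi(W^{*})$. The criticality has to be defeated over many generations: because $\E\ln m_{0}>0$ the products $\Pi_{n}$ grow exponentially, and the superlinearity of $\Psi$ converts this growth into a genuine gain once the weighted-moment estimate is iterated, the rare small values of $m_{0}$ being kept under control by $\E m_{0}^{-\delta}<\infty$. Concretely I would carry out the whole estimate on the truncations $\Psi_{A}(x)=\Psi(x\wedge A)$, establish a bound on $\E\Psi_{A}(W^{*})$ uniform in the truncation level $A$ by an induction on the number of generations, and then let $A\to\infty$ by monotone convergence to obtain \eqref{cbrweq17a}. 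Making this iteration genuinely contractive in the presence of the exactly critical mean term is the delicate step; the negative-moment hypothesis $\E m_{0}^{-\delta}<\infty$ is precisely the extra ingredient, beyond what secures $\E\bigl[W(\ln^{+}W)^{\lambda}\bigr]<\infty$, that makes the passage from $W$ to its supremum $W^{*}$ possible without inflating the logarithmic exponent.
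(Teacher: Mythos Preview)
The paper does not prove this lemma at all: it is quoted verbatim as Theorem~1.2 of \cite{LiangLiu10} and used as a black box. So there is no ``paper's own proof'' to compare against, only the original reference.

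Your outline is in the right spirit and is broadly how the cited result is obtained: the one-generation domination $W^{*}\le 1+m_{0}^{-1}\sum_{i\le \widehat N_{0}}W^{*(i)}$ is correct, and the general program of pushing a concave-type moment $\E\Psi(W^{*})$ through this recursion via Burkholder/Topchii--Vatutin inequalities, truncation, and iteration over generations is the standard route. You also correctly identify the two ingredients that make the argument close: the supercriticality $\E\ln m_{0}>0$ for the eventual contraction, and $\E m_{0}^{-\delta}<\infty$ to tame small $m_{0}$.

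That said, what you have written is a strategy, not a proof. The step you yourself flag as ``the crux'' --- turning the exactly critical one-step relation into a genuine contraction over many generations, uniformly in the truncation level $A$ --- is precisely the substance of the cited theorem, and you have not carried it out. In particular, you do not specify the form of the iterated inequality, how the error terms from the $\Delta_{2}$/scaling manipulations are summed, or how the uniform-in-$A$ bound is actually closed. The sentence ``the negative-moment hypothesis \dots\ is precisely the extra ingredient \dots\ that makes the passage from $W$ to $W^{*}$ possible'' is an assertion, not an argument. If you want to give an independent proof rather than cite \cite{LiangLiu10}, you need to write down the explicit inductive bound on $\E_{\xi}\Psi_{A}(W^{*})$ in terms of $\E_{\theta\xi}\Psi_{A}(W^{*})$ and show that iterating it yields a finite limit independent of $A$; otherwise, simply cite the reference as the paper does.
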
 Observe that
\begin{align*}
      \P_{k_n}(\mathbb{A}_n\neq \overline{\mathbb{A}}_n )&\leq \sum_{u\in \T_{k_n} } \P_{k_n}(X_{n,u}\neq \overline{X}_{n,u} )  =  \sum_{u\in \T_{k_n} } \P_{k_n}(|X_{n,u}|\geq \Pi_{k_n}) \\&\leq \sum_{u\in \T_{k_n} }  \P_{k_n} ( W_{n-k_n} (u)+1 \geq \Pi_{k_n})\\ &=  W_{k_n} \Big[r_n \P( W_{n-k_n}+1 \geq r_n )\Big]_{r_n=\Pi_{k_n}}
   \\&\leq W_{k_n} \Big[\E\big((W_{n-k_n}+1 ) \mathbf{1}_{ \{W_{n-k_n}+1 \geq r_n\}} \big)\Big]_{r_n=\Pi_{k_n}}
  \\ &\leq W_{k_n} \Big[\E\big((W^*+1 ) \mathbf{1}_{ \{W^*+1 \geq r_n\}} \big)\Big]_{r_n=\Pi_{k_n}}
  \\   & \leq  W^*(\ln \Pi_{k_n})^{-\lambda}\E (W^*+1)(\ln (W^*+1))^{\lambda}
  \\ &\leq K_\xi W^* n^{-\lambda \beta} \E (W^*+1)(\ln (W^*+1))^{\lambda},
\end{align*}
where the last inequality holds since
\begin{equation}\label{cbrweq4.9}
\frac{1}{n}  \ln \Pi_{n} \rightarrow \E\ln m_0>0   \mbox{ a.s. },
\end{equation}
 and  $k_n\sim n^{\beta}$.
By the choice of $\beta$ and Lemma \ref{lem5}, we obtain  \eqref{cbrweq3-7}.

\medskip
\noindent {\bf Step 2}.  We next prove that $\forall \varepsilon>0$,
\begin{equation}\label{cbrweq3-8}
 \sum_{n=1}^{\infty}{\P}_{k_n} ( |\overline{\mathbb{A}}_n -{\E}_{\xi,k_n} \overline{\mathbb{A}}_n|>\frac{ \varepsilon}{{n}^{3/2}}) <\infty.
\end{equation}

Take  a constant $b \in  (1, e^{\E\ln m_0})$. Observe  that $\forall u\in \T_{k_n}, n\geq 1$,
\begin{eqnarray*}
% \nonumber to remove numbering (before each equation)
   \E_{k_n} \bar{X}^2_{n,u} &=&  \int_{0}^\infty 2x\P_{k_n} (|\bar{X}_{n,u}|>x  )  dx
    =  2\int_0^\infty x\P_{k_n}  ( |{X}_{n,u} | \mathbf{1 }_{ \{|{X}_{n,u} |<\Pi_{k_n}  \}} >x ) dx\\
   &\leq & 2\int_0^{\Pi_{k_n}} x\P_{k_n} ( | W_{n-k_n}(u)+1 | >x) dx= 2\int_0^{\Pi_{k_n}} x\P ( | W_{n-k_n}+1 | >x) dx \\
    &\leq&  2 \int_0^{\Pi_{k_n} }  x\P( W^*+1 >x) dx \\
    & \leq& 2   \int_e^{\Pi_{k_n} } (\ln x)^{-\lambda} \E (W^*+1)(\ln(W^*+1) )^{\lambda}  dx + 2  \int_0^e xdx\\
    &\leq & 2 \E (W^*+1)(\ln(W^*+1) )^{\lambda} \left(\int_e^{b^{k_n}} (\ln x)^{-\lambda} dx + \int_{b^{k_n}}^{\Pi_{k_n}} (\ln x)^{-\lambda} dx \right)+e^2\\
    & \leq & 2 \E (W^*+1)(\ln(W^*+1) )^{\lambda}(b^{k_n}  + (\Pi_{k_n}-b^{k_n}) (k_n\ln b  )^{-\lambda} )+e^2.
\end{eqnarray*}
Then  we have that
\begin{align*}
&~ ~~\sum_{n=1}^{\infty} {\P}_{k_n}(|\overline{\mathbb{A}}_n-{\E}_{\xi,k_n} \overline{\mathbb{A}}_n|>\frac{\varepsilon}{{n}^{3/2}})\\  &=
 \sum_{n=1}^{\infty} {\E}_{k_n}{\P}_{\xi,k_n} (|\overline{\mathbb{A}}_n-{\E}_{\xi,k_n} \overline{\mathbb{A}}_n|>\frac{\varepsilon}{{n}^{3/2}}) \\
    &\leq \varepsilon^{-2}\sum_{n=1}^{\infty} n^3{\E}_{k_n}\left( {\Pi_{k_n}^{-2}}    \sum_{u\in \T_{k_n}}{\E}_{\xi,k_n} \overline{X}_{n,u}^2\right)= \varepsilon^{-2} \sum_{n=1}^{\infty}n^3 \left( {\Pi_{k_n}^{-2} }    \sum_{u\in \T_{k_n}}{\E}_{k_n} \overline{X}_{n,u}^2\right)\\
   & \leq \varepsilon^{-2} \sum_{n=1}^{\infty} \frac{n^3W_{k_n}}{\Pi_{k_n}}  \big [2 \E (W^*+1)(\ln(W^*+1)^{\lambda} )(b^{k_n}  + (\Pi_{k_n}-b^{k_n}) (k_n\ln b  )^{-\lambda} )+e^2\big ]\\
    & \leq  2\varepsilon^{-2}W^* \E (W^*+1)(\ln(W^*+1)^{\lambda} ) \bigg(  \sum_{n=1}^{\infty} \frac{n^3}{\Pi_{k_n}}b^{k_n} + \sum_{n=1}^{\infty} n^3 (k_n\ln b  )^{-\lambda}   \bigg) +e^2\varepsilon^{-2}W^*  \sum_{n=1}^{\infty} \frac{n^3}{\Pi_{k_n}}.
\end{align*}
By \eqref{cbrweq4.9} and $\lambda \beta >4$, the three series in the last expression above converge under our hypothesis and hence \eqref{cbrweq3-8} is proved.

\medskip
\noindent {\bf Step 3.} Observe
\begin{eqnarray*}
% \nonumber to remove numbering (before each equation)
 & & \P_{k_n} \Bigg(| \E_{\xi,k_n} \bar{\mathbb{A}}_n | > \frac{\varepsilon}{{n}^{3/2}} \Bigg )  \\
   & \leq &  \frac{{n}^{3/2}}{\varepsilon}  \E_{k_n}  | \E_{\xi,k_n} \bar{\mathbb{A}}_n |
   = \frac{{n}^{3/2}}{\varepsilon} \E_{k_n} \Big|   \frac{1}{\Pi_{k_n} }   \sum_{u\in \T_{k_n}}  \E_{\xi,k_n} \bar{X}_{n,u} \Big|\\ & = &\frac{{n}^{3/2}}{\varepsilon} \E_{k_n} \Big|   \frac{1}{\Pi_{k_n} }   \sum_{u\in \T_{k_n}}  (- \E_{\xi,k_n } X_{n,u} \mathbf{1}_{ \{ |X_{n,u}| \geq \Pi_{k_n}\}} ) \Big|
   \\ & \leq &  \frac{{n}^{3/2}}{\varepsilon} \frac{1}{\Pi_{k_n} }   \sum_{u\in \T_{k_n}}  \E_{k_n } ( W_{n-k_n} (u) +1) \mathbf{1}_{ \{ W_{n-k_n}(u)+1 \geq \Pi_{k_n}\}} \\ & = &    \frac{{n}^{3/2}W_{k_n}}{\varepsilon} \Big[ \E ( W_{n-k_n} +1) \mathbf{1}_{ \{ W_{n-k_n}+1 \geq r_n\}} \Big]_{r_n =\Pi_{k_n}}\\ &\leq& \frac{W^*}{\varepsilon}{n}^{3/2} \Big[ \E ( W^* +1) \mathbf{1}_{ \{ W^*+1 \geq r_n\}} \Big]_{r_n =\Pi_{k_n}}\\
   &\leq &\frac{W^*}{\varepsilon}  \frac{{n}^{3/2}}{(\ln\Pi_{k_n})^{\lambda} } \E    (W^*+1)  \ln ^{\lambda } ( W^*+1 )
   \\ & \leq & \frac{W^*}{\varepsilon}  K_\xi n^{3/2 -\lambda\beta}  \E    (W^*+1)  \ln ^{\lambda } ( W^*+1 ).
 \end{eqnarray*}Then  by \eqref{cbrweq4.9} and  $ \lambda \beta >4$,   it follows that \[ \sum_{n=1}^\infty \P_{k_n} \Bigg(| \E_{\xi,k_n} \bar{\mathbb{A}}_n | > \frac{\varepsilon}{{n}^{3/2}} \Bigg )<\infty. \]

Combining Steps 1-3, we obtain \eqref{eq10}. Hence  the lemma is proved.

\end{proof}

\begin{proof}[Proof of Lemma \ref{EEB-lem2}]
For ease of reference, we introduce some notation:
\begin{align*}
  &\kappa_{1, n}   ={\frac{1}{6} (s_n^2-s_{k_n}^2 )^{-3/2} }{(s_n^{(3)} - s_{k_n}^{(3)}) }, ~\qquad D_1(x)=-H_2(x)\phi(x) , \\ & \kappa_{2, n}  =   \frac{1}{72} (s_n^2-s_{k_n}^2 )^{-3} (s_n^{(3)} - s_{k_n}^{(3)})^2, ~ \qquad D_2(x)= -H_5(x)\phi(x), \\
 & \kappa_{3, n}  =\frac{ 1}{24} (s_n^2-s_{k_n}^2 )^{-2} \sum_{j=k_n}^{n-1} (\sigma_j^{(4)} -3\big(\sigma_j^{(2)} \big)^2 ), \quad
  D_3(x)= -H_3(x) \phi(x), \\
&  R_n(x) =-\frac{\Big(s_n^{(3)}-s_{k_n}^{(3)} \Big)^3}{1296 (s_n^2-s_{k_n}^2)^{9/2}} H_8(x) \phi(x) -\frac{\sum_{j=k_n}^{n-1} \Big(\sigma_j^{(5)} -10\sigma_j^{(3)}\sigma_j^{(2)} \Big)}{120 (s_n^2-s_{k_n}^2)^{5/2}}H_4(x)  \phi(x)\\ &\qquad \qquad-\frac{ \Big(s_n^{(3)}-s_{k_n}^{(3)}\Big ) \sum_{j=k_n}^{n-1} \Big(\sigma_j^{(4)} -3\big(\sigma_j^{(2)}\big)^2  \Big)} { 144  (s_n^2-s_{k_n}^2)^{7/2}}H_6(x)  \phi(x),
\end{align*}
Observe that
\begin{equation}\label{EEBeq4-11}
   \mathbb{B}_n= \mathbb{B}_{n1}+ \mathbb{B}_{n2}+ \mathbb{B}_{n3},
\end{equation}
where \begin{align*}
 \mathbb{B}_{n1}    & =\frac{1}{\Pi_{k_n }}   \sum_{u\in \mathbb{T}_{k_n}}\ind{|S_u|> k_n} \Bigg [{\E}_{\xi,k_n} W_{n-k_n}(u,s_n t-S_u)
  \Bigg ],\\
      \mathbb{B}_{n2}  & = \frac{1}{\Pi_{k_n}} \sum_{u\in \T_{k_n} }\ind{|S_u|\leq k_n}   \Bigg[  \E_{\xi,k_n} W_{n-k_n} (u,s_n t-
S_u)- \Phi\bigg( \frac{s_n t-S_u}{( s_n^2-s_{k_n}^2)^{1/2}}\bigg)\\
& \qquad \qquad \qquad \quad\quad\quad-\sum_{\nu=1}^3\kappa_{\nu,n} D_{\nu}\bigg( \frac{s_n t-S_u}{( s_n^2-s_{k_n}^2)^{1/2}}\bigg)-R_n\bigg( \frac{s_n t-S_u}{( s_n^2-s_{k_n}^2)^{1/2}}\bigg)\Bigg],  \\
 \mathbb{B}_{n3}  & =  \frac{1}{\Pi_{k_n}} \sum_{u\in \T_{k_n} } \ind{|S_u|\leq k_n} \Bigg[   \Phi\bigg( \frac{s_n t-S_u}{( s_n^2-s_{k_n}^2)^{1/2}}\bigg)+\sum_{\nu=1}^3\kappa_{\nu,n} D_{\nu}\bigg( \frac{s_n t-S_u}{( s_n^2-s_{k_n}^2)^{1/2}}\bigg)\\
 & \qquad \qquad \qquad  \qquad \qquad \qquad + R_n\bigg( \frac{s_n t-S_u}{( s_n^2-s_{k_n}^2)^{1/2}}\bigg) \Bigg].
\end{align*}
The lemma will be proved once we show that  a.s.
\begin{align}\label{EEBeq3-12}
  & n^{3/2}\mathbb{B}_{n1} \xrightarrow{n\rightarrow\infty} 0,  \\
   \label{EEBeq3-13}
    &    n^{3/2}\mathbb{B}_{n2}   \xrightarrow{n\rightarrow\infty} 0,\\
 \nonumber   &  \mathbb{B}_{n3}=\Bigg (\Phi(t)+\sum_{\nu=1}^{3}\frac{  Q_{\nu,n}(t)}{n^{\nu/2}} \Bigg) \overline{W}_{k_n}+\Big(-\frac{1}{s_n}\Big)\Bigg( \phi(t)+ \sum_{\nu=1}^{2}\frac{  Q'_{\nu,n}(t)}{n^{\nu/2}}\Bigg)\overline{N}_{1,k_n}\\  \label{EEBeq3-14} & \qquad \qquad +\frac{1}{2!}\frac{1}{s_n^2} \Bigg( \phi'(t) + \frac{Q''_{1,n}(t)}{n^{1/2}}\Bigg)\overline{N}_{2,k_n} + \frac{1}{3!}(-\frac{1}{s_n^3}) \phi''(t)\overline{N}_{3,k_n}   +o\big(\frac{1}{n^{3/2}}\big),
\end{align}
where $ \overline{W}_{k_n},\overline{N}_{1,k_n},\overline{N}_{2,k_n}, \overline{N}_{3,k_n}$  are defined in \eqref{EEBeq3-5}--\eqref{EEBeq3-7}.
We will prove these results subsequently.

 First we prove \eqref{EEBeq3-12}.
 Since          \begin{equation*}
           |\mathbb{B}_{n1}| \leq   \frac{1}{\Pi_{k_n}} \sum_{u\in \T_{k_n} } \ind{|S_u|> k_n} ,
          \end{equation*}
 it will follow from the following fact:
 \begin{equation}\label{EEB-eq3.13}
   {n}^{3/2}  \frac{1}{\Pi_{k_n}} \sum_{u\in \T_{k_n}} \mathbf{1}_{\{S_u|>k_n\}} \xrightarrow{n\rightarrow \infty} 0 ~ \mbox{a.s.}
 \end{equation}
 In order to prove \eqref{EEB-eq3.13}, we first observe that
\begin{align*}
%&\E \left( \sum_{n=1}^\infty \sqrt{n }\frac{1}{\Pi_{k_n}} \sum_{u\in \T_{k_n}} |S_u|\ind{|S_u|>k_n} \right) = \sum_{n=1}^\infty  \sqrt{n } \E |\widehat{S}_{k_n}| \ind{|\widehat{S}_{k_n}|>k_n }\\  \leq\ &\sum_{n=1}^\infty  \sqrt{n } k_n^{1-\eta} \E|\widehat{S}_{k_n}|^{\eta}
%\leq\sum_{n=1}^\infty \sqrt{n } k_n^{-\frac{\eta}{2}} \sum_{j=0}^{k_n-1} \E |\widehat{L}_j|^{\eta}
%           =\sum_{n=1}^\infty  \sqrt{n } k_n^{1-\frac{\eta}{2}}\E |\widehat{L}_0|^{\eta} \\
%          < & \sum_{n=1}^\infty n k_n^{-\frac{\eta}{2}}\E |\widehat{L}_0|^{\eta},
%           \\
&
   \E \left( \sum_{n=1}^\infty {n}^{3/2} \frac{1}{\Pi_{k_n}} \sum_{u\in \T_{k_n}} \ind{|S_u|>k_n} \right)
    =\sum_{n=1}^\infty  {n}^{3/2} \E  \ind{|\widehat{S}_{k_n}|>k_n }  \leq\ \sum_{n=1}^\infty  {n} ^{3/2} k_n^{-\eta} \E|\widehat{S}_{k_n}|^{\eta}
  \\ & \leq\sum_{n=1}^\infty  {n}^{3/2} k_n^{-\frac{\eta}{2}-1}  \sum_{j=0}^{k_n-1} \E |\widehat{L}_j|^{\eta}
           =\sum_{n=1}^\infty n^{3/2} k_n^{-\frac{\eta}{2}}\E |\widehat{L}_0|^{\eta},
\end{align*}
where $\widehat{S}_{k_n}= \sum_{j=0}^{k_n-1} \widehat{L}_j$.
By the choice of $\beta$ and  $k_n$,  $3/2-\beta \eta/2  <-1$ and
the series in the right hand side of the  above expression converges.
So $$
\sum_{n=1}^\infty n^{3/2} \frac{1}{\Pi_{k_n}} \sum_{u\in \T_{k_n}} \ind{|S_u|>k_n} <\infty\mbox{ ~  a.s.,} $$
 which implies \eqref{EEB-eq3.13}, and  consequently  \eqref{EEBeq3-12} follows.

The proof of \eqref{EEBeq3-13} will mainly be based on  the following result about the asymptotic expansion of the distribution of the sum of random variables.
\begin{prop}\label{EEBpro2} Under the hypothesis of Theorem \ref{EEB-thm2}, for a.e. $\xi$,
\begin{equation*}
\varepsilon_n := n^{\frac{3}{2}} \sup_{x\in \R} \Bigg|  \P_{\xi } \bigg(\frac{\sum_{k=k_n}^{n-1}\widehat{L}_{k}}{( s_n^2-s_{k_n}^2)^{1/2}} \leq x \bigg)-  \Phi (x) -\sum_{\nu=1}^3\kappa_{\nu,n}D_\nu(x)   -R_n(x)\Bigg|  \xrightarrow{n\rightarrow\infty}  0.
\end{equation*}
\end{prop}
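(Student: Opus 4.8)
The plan is to recognize the bracketed expression as the genuine Edgeworth expansion, up to order three, of the sum of the independent (under $\P_\xi$) summands $X_j=\widehat{L}_j$, $k_n\le j\le n-1$, and then to invoke the Bai--Zhao estimate of Lemma~\ref{lem-Edge-exp} to bound the remainder at the rate $o(n^{-3/2})$. Write $m=n-k_n$ and $B_m=(s_n^2-s_{k_n}^2)^{1/2}$, so that $B_m^2=\sum_{j=k_n}^{n-1}\sigma_j^{(2)}$ is exactly the variance of $\sum_{j=k_n}^{n-1}\widehat L_j$ (recall $l_j=0$). First I would apply Lemma~\ref{lem-Edge-exp} with $k=5$; since $k-2=3$, it produces exactly three Edgeworth terms $Q_{1,m}m^{-1/2}+Q_{2,m}m^{-1}+Q_{3,m}m^{-3/2}$. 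The initial task is the purely algebraic identity $\sum_{\nu=1}^3 Q_{\nu,m}(x)m^{-\nu/2}=\sum_{\nu=1}^3\kappa_{\nu,n}D_\nu(x)+R_n(x)$, which is mere bookkeeping: one lists the nonnegative integer solutions of $k_1+2k_2+\cdots=\nu$ (selecting $H_2$ at order $1$; $H_5,H_3$ at order $2$; $H_8,H_6,H_4$ at order $3$) and substitutes the cumulants $\gamma_{3,j}=\sigma_j^{(3)}$, $\gamma_{4,j}=\sigma_j^{(4)}-3(\sigma_j^{(2)})^2$, $\gamma_{5,j}=\sigma_j^{(5)}-10\sigma_j^{(3)}\sigma_j^{(2)}$, the powers of $m$ cancelling against the $n^{(\nu-2)/2}$ inside $\lambda_{\nu,m}$. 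After this it remains only to show that each of the three pieces of the Bai--Zhao bound is $o(n^{-3/2})$, uniformly in $x$, for a.e.\ $\xi$.

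Throughout I would fix $\xi$ in a single probability-one set on which the relevant laws of large numbers hold; these are valid because $(\xi_j)$ is i.i.d.\ and the corresponding annealed moments are finite under \eqref{EEB-eq2.3} with $\eta>40$. In particular $B_m^2\sim n\,\E\sigma_0^{(2)}$ and $\sum_{j=k_n}^{n-1}\E_\xi|\widehat L_j|^\eta\sim n\,\E|\widehat L_0|^\eta$, the truncation $k_n/n\to0$ being harmless. The two moment pieces are then dispatched by exploiting the truncations built into $W_{mj}^{(x)}$ and $Z_{mj}^{(x)}$: on $\{|\widehat L_j|>B_m(1+|x|)\}$ one has $|\widehat L_j|^{5}\le(B_m(1+|x|))^{5-\eta}|\widehat L_j|^{\eta}$, so the first piece is at most $C_\xi(1+|x|)^{-\eta}B_m^{-\eta}\sum_j\E_\xi|\widehat L_j|^\eta\sim C_\xi(1+|x|)^{-\eta}n^{1-\eta/2}$, and the second is bounded by $C_\xi(1+|x|)^{-6}B_m^{-6}\sum_j\E_\xi|\widehat L_j|^{6}\sim C_\xi(1+|x|)^{-6}n^{-2}$; both are $o(n^{-3/2})$ uniformly in $x$ since $\eta>6$.

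The hard part will be the third, characteristic-function piece, of the form $C(1+|x|)^{-6}m^{15}\big(\sup_{|t|\ge\delta_m}\tfrac1m\sum_{j=k_n}^{n-1}|v_j(t)|+\tfrac1{2m}\big)^m$, where $v_j(t)=\E_\xi e^{it\widehat L_j}$. Everything rests on showing that, for a.e.\ $\xi$ and all large $n$, $\sup_{|t|\ge\delta_m}\tfrac1m\sum_{j=k_n}^{n-1}|v_j(t)|\le q$ for some constant $q<1$; then $(q+\tfrac1{2m})^m$ decays geometrically in $m\sim n$ and overwhelms the polynomial factor $m^{15}$, making the piece $o(n^{-A})$ for every $A$. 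First I would observe that $\delta_m=\tfrac1{12}B_m^2(\sum_j\E_\xi|Y_{mj}|^3)^{-1}\to\delta_0:=\E\sigma_0^{(2)}/(12\,\E|\widehat L_0|^3)\in(0,\infty)$, so for large $n$ it is enough to control the average uniformly over the fixed frequency set $\{|t|\ge\delta_0/2\}$. The structural input is the positive-probability Cram\'er condition in \eqref{EEB-eq2.3}. Writing $\widehat v_\theta(t)=\int_{\R}e^{itx}G(\theta)(dx)$, so $v_j(t)=\widehat v_{\xi_j}(t)$, the condition $\limsup_{|t|\to\infty}|\widehat v_\theta(t)|<1$ forces $G(\theta)$ to be non-degenerate and non-lattice (a lattice law has modulus one along a sequence $|t|\to\infty$); hence $|\widehat v_\theta(t)|<1$ for every $t\ne0$, and by continuity and compactness $\sup_{\delta_0/2\le|t|\le T}|\widehat v_\theta(t)|<1$ while Cram\'er bounds the tail $|t|>T$. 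Consequently the event $C=\{\theta:\limsup_{|t|\to\infty}|\widehat v_\theta(t)|<1\}$ equals $\bigcup_{\epsilon>0}A_\epsilon$ with $A_\epsilon=\{\theta:\sup_{|t|\ge\delta_0/2}|\widehat v_\theta(t)|\le1-\epsilon\}$, and since $\mathbbm{p}(C)>0$ some $A_\epsilon$ has $\mathbbm{p}(A_\epsilon)=:p_1>0$.

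To finish I would apply the strong law to the i.i.d.\ indicators $\mathbf{1}_{\{\xi_j\in A_\epsilon\}}$: because $k_n/n\to0$, the proportion of $j\in[k_n,n-1]$ with $\xi_j\in A_\epsilon$ exceeds $p_1/2$ for all large $n$, a.s.; bounding $|v_j(t)|\le1-\epsilon$ on these good indices and $|v_j(t)|\le1$ otherwise yields $\sup_{|t|\ge\delta_0/2}\tfrac1m\sum_{j=k_n}^{n-1}|v_j(t)|\le1-\tfrac{\epsilon p_1}{2}=:q<1$ for all large $n$. Assembling the three estimates gives a Bai--Zhao bound that is $o(n^{-3/2})$ uniformly in $x$ on a probability-one set of environments, which is precisely $\varepsilon_n\to0$. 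I expect the genuine difficulty to reside entirely in this last step, namely in upgrading the pointwise Cram\'er condition to a bound on the \emph{averaged} modulus $\tfrac1m\sum_j|v_j(t)|$ that is uniform over the non-compact range $|t|\ge\delta_m$ and holds simultaneously for a.e.\ $\xi$; the moment estimates and the Hermite/cumulant bookkeeping are routine given the very generous hypothesis $\eta>40$.
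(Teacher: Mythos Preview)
Your proof is correct and follows essentially the same route as the paper: apply the Bai--Zhao Edgeworth bound (Lemma~\ref{lem-Edge-exp}) with $k=5$, identify the three Edgeworth terms with $\sum_{\nu=1}^3\kappa_{\nu,n}D_\nu+R_n$, control the moment pieces by the law of large numbers for $B_m^{-6}\sum_j\E_\xi|\widehat L_j|^6\sim n^{-2}$, and kill the characteristic-function piece by extracting from~\eqref{EEB-eq2.3} a fixed event of positive $\mathbbm p$-probability on which $|v(\cdot)|$ is uniformly bounded away from $1$, then averaging via the strong law. The only notable difference is that you treat the moderate-frequency range $\delta_0/2\le|t|\le T$ explicitly (via non-latticeness on the Cram\'er event), whereas the paper passes directly to $\sup_{|t|>T}$; your version is slightly more complete on this point.
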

\begin{proof}
%Let $X_k=0$  for $0\leq k \leq k_n-1$ and  $X_k= \widehat{L}_{k}$ for $k_n\leq k \leq n-1$. Then the random variables $\{X_k\}$  are independent under $\P_\xi$.
Denote by $v_k = v(\xi_k) $ the characteristic function of the random distribution $G(\xi_k)$, which is also the characteristic function of $\widehat{L}_{k}$ under $\P_\xi$: for all real $t$,
 $ v_k(t)  = \int e^{itx} G(\xi_k) (dx) = \E_\xi e^{it \widehat{L}_{k}} $.  Combining the Markov inequality with Lemma \ref{lem-Edge-exp}, we obtain the following result:
\begin{align*}
    &\sup_{x\in \R} \Bigg|\P_{\xi } \bigg(\frac{\sum_{k=k_n}^{n-1}\widehat{L}_{k}}{( s_n^2-s_{k_n}^2)^{1/2}} \leq x \bigg)-  \Phi (x) -\sum_{\nu=1}^3\kappa_{\nu,n}D_\nu(x) -R_n(x)   \Bigg|    \\
\leq  & K_\xi  \left\{ (s_n^2-s_{k_n}^2 )^{-3} \sum_{j=k_n }^{n-1} \E_\xi | \widehat{L}_{j}|^6+  n^{15} \left(\sup_{|t| >T } \frac{1}{n} \bigg( k_n+ \sum_{j=k_n}^{n-1} |v_j(t)| \bigg)+ \frac{1}{2n}\right)^n\right\}.
\end{align*}
By our conditions on the environment, we know that
\begin{equation}\label{EEB-eq3.14}
\lim_{n\rightarrow \infty} n^{2}  {(s_n^2-s_{k_n}^2)^{- 3}} \sum_{j=k_n}^{n-1} \E_\xi |\widehat{L}_k|^6  = \E |\widehat{L}_0|^6/ (\E \sigma_0^{(2)})^{3}.
\end{equation}
By \eqref{EEB-eq2.3}, $v_0 $
satisfies
\begin{equation*}
 \P\Big(  \limsup_{|t|\rightarrow\infty}|v_0(t)|<1 \Big) >0.
\end{equation*}
So there exist   constants $T>0$ and  $0<c<1$ such that $ \P\Big(\sup_{|t|> T} |v_0(t)|<c \Big) >0$. Since $v_n$ has the same law as $v_0$, it follows that $ \P\Big(\sup_{|t|> T} |v_n(t)|<c \Big) >0$. Define $c(\xi_n) = c$ if the characteristic function $v_n = v(\xi_n)  $ of
$G(\xi_n)$ satisfies $  \sup_{|t|> T} |v_n(t)|<c $, and
$c(\xi_n) = 1$ otherwise. Then
$c_n := c(\xi_n) $ satisfies $0< c_n \leq 1$ (in fact $c_n = c$ or $1$),
\begin{equation*}
  \sup_{|t|> T} |v_n(t)| \leq c_n\quad   \mbox{ and }  \quad  \P(c_n <1) >0.
 \end{equation*}
Consequently, by the law of large numbers, we have  \begin{align*}
                   \sup_{|t|>T}   \Big(\frac{1}{n}\sum_{j=k_n}^{n-1} |v_j(t)| \Big)&\leq  \frac{1}{n}\sum_{j=1}^{n-1} c_j \rightarrow \E c_0<1.
                                 \end{align*}
 %\begin{equation}\label{cbrweq3-22}
% \lim_{n\rightarrow \infty } \delta_n = \frac{11}{128} \frac{\E \sigma_0^{(2)}}{\E\alpha_0}.
%\end{equation}
Then for $n$ large enough,
\begin{equation}\label{EEB-eq3.15}
  \bigg(\sup_{|t| >T } \frac{1}{n} \Big( k_n+ \sum_{j=k_n}^{n-1}  |v_j(t)| \Big)+ \frac{1}{2n}\bigg)^n=o( n^{-m}), \quad  \forall  m >0.
\end{equation}
The proposition comes from  \eqref{EEB-eq3.14} and \eqref{EEB-eq3.15}.
\end{proof}
Observe that for $u\in \T_{k_n}$,
$$ \E_{\xi,k_n} W_{n-k_n}(u,  s_nt-S_u)   =  \P_{\xi } \bigg(\frac{\sum_{k=k_n}^{n-1}\widehat{L}_{k}}{( s_n^2-s_{k_n}^2)^{1/2}} \leq x \bigg)\bigg |_{x=s_nt-S_u}.$$
From Proposition \ref{EEBpro2}, it follows that
\begin{equation}\label{EEB-eq3.16}
   n^{3/2}|\mathbb{B}_{n2}|\leq W_{k_n} \varepsilon_n  \xrightarrow{n\rightarrow\infty} 0.
\end{equation}
Hence \eqref{EEBeq3-13} is proved.

It remains to prove \eqref{EEBeq3-14}.   Our arguments  will depend heavily on Taylor's  expansion with tedious calculus. In the following,
  %\color{red}
 we shall use the notation $\varepsilon_n^*$ to denote an infinitesimal (which may change from line to line) dominated by another one $a_n$   depending only on the environment $\xi$ and on the value of $t$: that is
%depending on $S_u $ and $k_n$,  while satisfying
 %\begin{equation}\label{EEBequ}
% \sup_{|S_u|\leq k_n} |\varepsilon_n^*|\longrightarrow  0  \quad \mbox{as}  \quad n\rightarrow \infty.
% \end{equation}
\begin{equation}\label{EEBequ}
 |\varepsilon_n^*| \leq a_n = a_n(\xi, t) \longrightarrow  0  \quad \mbox{as}  \quad n\rightarrow \infty.
\end{equation}
Below we suppose always that  $  u\in \T_{k_n}$ and $|S_u| \leq k_n$. Then
  % uniformly { \color{red} ***** To Gao: what do you mean by this? uniformly in what?  better to explain the exact mean of o(.) about the uniformity. One could say something like this: where $ o(n^{-3/2} $ stands for a term $a_n$ such that  $a_n/ n^{-3/2} \rightarrow 0$ and that ... (express the uniformity).   Idem below   *****}
\begin{eqnarray*}
% \nonumber to remove numbering (before each equation)
    \frac{s_nt-S_u}{\sqrt{s_n^2-s_{k_n}^2}} -t &=&  \bigg[\Big(1-\frac{s^2_{k_n}}{s_n^2}\Big)^{-1/2} -1\bigg]t-
    \bigg(1-\frac{s^2_{k_n}}{s_n^2}\bigg)^{-1/2}\frac{S_u}{s_n}   \\
  &=&  \bigg[1+  \frac{s^2_{k_n}}{2s_n^2}+  \varepsilon_n^* n^{-3/2}  -1  \bigg]t-  \bigg[1+  \frac{s^2_{k_n}}{2s_n^2}+ \varepsilon_n^* n^{-3/2}  \bigg] \frac{S_u}{s_n}\\
   &=&   -\frac{S_u}{s_n}+ \frac{s^2_{k_n}t}{2s_n^2}- \frac{s^2_{k_n}S_u}{2s_n^3}+ \varepsilon_n^* n^{-3/2}
\end{eqnarray*}
Further,  it is easy to see that
%for $|S_u| \leq k_n$
\begin{eqnarray*}
% \nonumber to remove numbering (before each equation)
    \Bigg(\frac{s_nt-S_u}{\sqrt{s_n^2-s_{k_n}^2}} -t\Bigg)^2& =&  \frac{S_u^2}{s_n^2}-\frac{
    {  s_{k_n}^2} S_u t}{s_n^3} + \varepsilon_n^* n^{-3/2} ; \\
   \Bigg (\frac{s_nt-S_u}{\sqrt{s_n^2-s_{k_n}^2}} -t\Bigg)^3&=&  - \frac{S_u^3}{s_n^3}+ \varepsilon_n^* n^{-3/2} .
\end{eqnarray*}
By  Taylor's expansion and the above estimates,
\begin{eqnarray}
% to remove numbering (before each equation)
 \nonumber && \Phi\bigg( \frac{s_n t-S_u}{( s_n^2-s_{k_n}^2)^{1/2}}\bigg)
 \\   \nonumber & =& \Phi(t)+ \sum_{j=1}^3\frac{1}{j!}\Phi^{(j)}(t) \bigg(\frac{s_nt-S_u}{\sqrt{s_n^2-s_{k_n}^2}} -t\bigg)^j +\varepsilon_n^* n^{-3/2}   \\
\nonumber &=&\Phi(t) -\frac{1}{s_n} \phi(t)S_u - \frac{1}{2s_n^2}t\phi(t)(S_u^2-s_{k_n}^2)-\frac{1}{6s_n^3}\phi(t)H_2(t)( S_u^3-3s_{k_n}^2S_u )\\\label{EEBeq3-15}
  &&+\varepsilon_n^* n^{-3/2}.
   %+\frac{1}{2s_n^3} \phi(t)(t^2-1)s_{k_n}^2S_u - \frac{1}{6s_n^3}\phi(t)(t^2-1) S_u^3\\
   %&=& \Phi(t) -\frac{1}{s_n} \phi(t)S_u - \frac{1}{2s_n^2}t\phi(t)(S_u^2-s_{k_n}^2)  -\frac{1}{6s_n^3}\phi(t)(t^2-1)( S_u^3-3s_{k_n}^2S_u -s_{k_n}^{(3)})   - \frac{1}{6s_n^3}\phi(t)(t^2-1)s_{k_n}^{(3)}
\end{eqnarray}
%where $o(\cdot)$  is  uniform  for $ |S_u|\leq k_n$.

Since \begin{eqnarray*}
      % \nonumber to remove numbering (before each equation)
       \kappa_{1,n}& =& {\frac{1}{6s_n^3} \bigg(1-\frac{s_{k_n}^2}{s_n^2} \bigg)^{-3/2} }{(s_n^{(3)} - s_{k_n}^{(3)}) }=  \frac{1}{6 s_n^{3}} \bigg(1+\frac{3s_{k_n}^2}{2s_n^2} + \varepsilon_n^*  n^{-1}   \bigg)(s_n^{(3)} - s_{k_n}^{(3)})\\
         &=&\frac{1}{6 s_n^{ 3}}s_n^{(3)} - \frac{1}{6 s_n^{ 3}} s_{k_n}^{(3)}  +\frac{ s_n^{(3)} s_{k_n}^2}{4s_n^5}+ \varepsilon_n^* n^{-3/2}
          \end{eqnarray*}
         and
          \begin{eqnarray*}
       &&  D_1(\frac{s_nt-S_u}{{(s_n^2-s_{k_n}^2)^{1/2}}}) \\
       & =& D_1(t)+ D_1'(t) \bigg(\frac{s_nt-S_u}{\sqrt{s_n^2-s_{k_n}^2}} -t\bigg) + \frac{1}{2}D_1''(t) \bigg(\frac{s_nt-S_u}{\sqrt{s_n^2-s_{k_n}^2}} -t\bigg)^2+  \varepsilon_n^*    n^{-1}
    \\   &=& -H_2(t)\phi(t) +H_3(t)\phi(t) (  -\frac{S_u}{s_n}+ \frac{s^2_{k_n}t}{2s_n^2}) -\frac{1}{2 s_n^2}H_4(t)\phi(t)  {S_u^2}+ \varepsilon_n^*    n^{-1} ,
                \end{eqnarray*}
 we obtain
 \begin{eqnarray}
 % \nonumber to remove numbering (before each equation)
   \nonumber &&   \kappa_{1,n} D_1(\frac{s_nt-S_u}{\sqrt{s_n^2-s_{k_n}^2}})\\ \nonumber&=&  -\frac{1}{6 s_n^{ 3}}s_n^{(3)}H_2(t)\phi(t) +\frac{1}{6 s_n^{ 3}} s_{k_n}^{(3)}H_2(t)\phi(t)- \frac{1}{6s_n^4} s_n^{ (3)}S_uH_3(t)\phi(t)\\
 \nonumber    & &-\frac{ s_n^{(3)} s_{k_n}^2}{4s_n^5}H_2(t)\phi(t)+\frac{1}{12s_n^5} s_n^{(3)} s_{k_n}^2tH_3(t)\phi(t)-\frac{1}{12s_n^5}s_n^{(3)}{S_u^2}H_4(t)\phi(t)+\varepsilon_n^* n^{-3/2} \\
   \nonumber &=&-\frac{1}{6 s_n^{ 3}}s_n^{(3)}H_2(t)\phi(t) +\frac{1}{6 s_n^{ 3}} s_{k_n}^{(3)}H_2(t)\phi(t)- \frac{1}{6s_n^4} s_n^{ (3)}S_uH_3(t)\phi(t) \\
    \label{EEBeq3-16}&& -\frac{1}{12s_n^5}s_n^{(3)}(S_u^2-s_{k_n}^2)H_4(t)\phi(t)+ \varepsilon_n^* n^{-3/2} ,
 \end{eqnarray}
where in the last step we   use  the recurrence relation of Hermite polynomials:
\begin{equation}\label{EEBHermite}
   H_{m+1}(t)=tH_m(t)-mH_{m-1}(t).
\end{equation}

Noticing  that
\begin{eqnarray*}
     % \nonumber to remove numbering (before each equation)
        \kappa_{2,n} &=&\frac{1}{72}\frac{(s_n^{(3)}-s_{k_n}^{(3)} )^2}{ (s_n^2-s_{k_n}^2)^3}=\frac{1}{72 s_n^6 } {(s_n^{(3)})^2} + \varepsilon_n^* n^{-3/2} , \\
         D_2(\frac{s_nt-S_u}{\sqrt{s_n^2-s_{k_n}^2}})&=&   D_2(t)+ D_2'(t) \bigg(\frac{s_nt-S_u}{\sqrt{s_n^2-s_{k_n}^2}} -t\bigg)+\varepsilon_n^*  n^{-\frac{1}{2}} \\
         &=& -H_5(t)\phi(t)+ H_6(t)\phi(t)(-\frac{1}{s_n}S_u)+\varepsilon_n^* n^{-1/2} ,
     \end{eqnarray*}
we have \begin{multline}\label{EEBeq3-17}
          \kappa_{2,n}D_2(\frac{s_nt-S_u}{\sqrt{s_n^2-s_{k_n}^2}})=
 \\ -\frac{1}{72 s_n^6 } {(s_n^{(3)})^2} H_5(t)\phi(t)-\frac{1}{72 s_n^7 } {(s_n^{(3)})^2} S_u H_6(t)\phi(t)+\varepsilon_n^* n^{-3/2} .
        \end{multline}
Observing that
\begin{eqnarray*}
% \nonumber to remove numbering (before each equation)
  \kappa_{3,n}& =& \frac{ 1}{24} (s_n^2-s_{k_n}^2 )^{-2} \sum_{j=k_n}^{n-1} (\sigma_j^{(4)} -3\big(\sigma_j^{(2)} \big)^2 )\\& = & \frac{ 1}{24s_n^4} \sum_{j=0}^{n-1} (\sigma_j^{(4)} -3\big(\sigma_j^{(2)} \big)^2 )+  \varepsilon_n^* n^{-3/2} , \\
 D_3\bigg(\frac{s_nt-S_u}{\sqrt{s_n^2-s_{k_n}^2}}\bigg) &=&D_3(t) + D_3'(t)(-\frac{1}{s_n} S_u)+   \varepsilon_n^* n^{-1/2} \\
  & =&-H_3(t)\phi(t)-\frac{1}{s_n}H_4(t)\phi(t) S_u+  \varepsilon_n^* n^{-1/2} ,
\end{eqnarray*}
we get
   \begin{multline}\label{EEBeq3-18}
      \kappa_{3,n}D_3\bigg(\frac{s_nt-S_u}{\sqrt{s_n^2-s_{k_n}^2}}\bigg)
     = -\frac{ 1}{24s_n^4} \sum_{j=0}^{n-1} \Big(\sigma_j^{(4)} -3\big(\sigma_j^{(2)} \big)^2 \Big)H_3(t)\phi(t)\\
     -\frac{ 1}{24s_n^5} \sum_{j=0}^{n-1}  \Big(\sigma_j^{(4)} -3\big(\sigma_j^{(2)} \big)^2\Big ) H_4(t)\phi(t) S_u+  \varepsilon_n^* n^{-3/2} .
   \end{multline}
It is easy to check that
\begin{multline}\label{EEBeq3-19}
  R_n(\frac{s_nt-S_u}{\sqrt{s_n^2-s_{k_n}^2}})=-\frac{\Big(s_n^{(3)} \Big)^3}{1296 s_n^{9}} H_8(t) \phi(t) -\frac{1}{120 s_n^{5}}\sum_{j=0}^{n-1} \Big(\sigma_j^{(5)} -10\sigma_j^{(3)}\sigma_j^{(2)} \Big)H_4(t)  \phi(t)\\ -\frac{ s_n^{(3)} } { 144  s_n^{7}} \sum_{j=0}^{n-1} \Big(\sigma_j^{(4)} -3\big(\sigma_j^{(2)}\big)^2  \Big)H_6(t)  \phi(t) + \varepsilon_n^* n^{-3/2} =   \frac{Q_{3,n}(t)}{n^{\frac{3}{2}}} + \varepsilon_n^* n^{-3/2}.
  \end{multline}

Plugging  the expansions \eqref{EEBeq3-15}, \eqref{EEBeq3-16},  \eqref{EEBeq3-17}, \eqref{EEBeq3-18} and \eqref{EEBeq3-19} into  $\mathbb{B}_{n3}$ defined in \eqref{EEBeq4-11}, we deduce that
\begin{multline*}
   \Bigg|\mathbb{B}_{n3}-\Bigg (\Phi(t)+\sum_{\nu=1}^{3}\frac{  Q_{\nu,n}(t)}{n^{\nu/2}} \Bigg) \overline{W}_{k_n}+\Big(-\frac{1}{s_n}\Big)\Bigg( \phi(t)+ \sum_{\nu=1}^{2}\frac{  Q'_{\nu,n}(t)}{n^{\nu/2}}\Bigg)\overline{N}_{1,k_n}\\    +\frac{1}{2!}\frac{1}{s_n^2} \Bigg( \phi'(t) + \frac{Q''_{1,n}(t)}{n^{1/2}}\Bigg)\overline{N}_{2,k_n} + \frac{1}{3!}(-\frac{1}{s_n^3}) \phi''(t)\overline{N}_{3,k_n} \Bigg| \leq a_n \overline{W}_{k_n}  n^{-\frac{3}{2}}.
\end{multline*}
By
using \eqref{EEBequ} and the fact that
$$ 0\leq \overline{W}_{k_n} \leq W_{k_n}\xrightarrow[a.s.]{n \rightarrow \infty } W , $$
we obtain the desired \eqref{EEBeq3-14}.

The assertion \eqref{EEBeq3-4}  follows from \eqref{EEBeq3-12}, \eqref{EEBeq3-13}  and \eqref{EEBeq3-14},   hence  the lemma is proved.
\end{proof}

\begin{proof}[Proof of Lemma  \ref{EEB-lem3}]
Observe
\begin{equation*}
   \overline{W}_{k_n} -W = -\frac{1}{\Pi_{k_n}} \sum_{u\in \T_{k_n}} \mathbf{1}_{\{|S_u|>k_n\}} + (W_{k_n}-W).
\end{equation*}
Thus
\eqref{EEBeq3-8}    follows from \eqref{EEB-eq3.13}  and the following lemma.
\begin{lem}[\cite{HuangLiu}]\label{EEB-lem5}
Assume the condition  \eqref{EEB-eq2.1}.
 Then
\begin{equation*}
W-W_n=o(n^{-\lambda})\qquad a.s.
\end{equation*}
\end{lem}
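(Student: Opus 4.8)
The plan is to control the tail of the martingale $(W_n)$ increment by increment, with a truncation calibrated to the geometric growth of $\Pi_n$ and to the logarithmic moment in \eqref{EEB-eq2.1}. Writing $D_k=W_{k+1}-W_k=\Pi_{k+1}^{-1}\sum_{u\in\mathbb T_k}(N_u-m_k)$ (using $\Pi_{k+1}=\Pi_k m_k$), we have $W-W_n=\sum_{k\ge n}D_k$, so it suffices to prove $n^{\lambda}\sum_{k\ge n}D_k\to 0$ a.s. First I would fix $a\in(1,e^{\E\ln m_0})$, so that $\Pi_k\ge a^k$ eventually by \eqref{cbrweq4.9}, and fix a truncation exponent $\theta\in(0,1)$. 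For $|u|=k$ I set $\bar N_u=N_u\mathbf 1_{\{N_u\le\Pi_k^{\theta}\}}$ and split $D_k=D_k'+D_k''+D_k'''$ into the centered truncated part $D_k'=\Pi_{k+1}^{-1}\sum_u(\bar N_u-\E_\xi\bar N_u)$, the predictable correction $D_k''=\Pi_{k+1}^{-1}\sum_u(\E_\xi\bar N_u-m_k)$, and the large-offspring part $D_k'''=\Pi_{k+1}^{-1}\sum_u(N_u-\bar N_u)$.

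For the fluctuation term $D_k'$ I would compute the conditional variance $\E_\xi[(D_k')^2\mid\mathscr F_k]=\Pi_{k+1}^{-2}|\mathbb T_k|\,\mathrm{Var}_\xi(\bar N_0)$. Using $\Pi_{k+1}^{-2}|\mathbb T_k|=W_k\Pi_k^{-1}m_k^{-2}$ and the elementary bound $\E_\xi[N^2\mathbf 1_{\{N\le L\}}]\le L(\ln L)^{-(1+\lambda)}\E_\xi[N(\ln^+N)^{1+\lambda}]$, valid since $x\mapsto x/(\ln x)^{1+\lambda}$ is eventually increasing, with $L=\Pi_k^{\theta}$ I obtain, after annealing and using that $\Pi_k$ is independent of $\xi_k$ together with \eqref{EEB-eq2.1} and $\ln\Pi_k\sim k\,\E\ln m_0$, an estimate of the form $\E[(D_k')^2]\le C\,\Pi_k^{\theta-1}k^{-(1+\lambda)}$, which decays geometrically in $k$. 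I would then upgrade this to $n^{\lambda}\sum_{k\ge n}D_k'\to 0$ a.s. by applying Kolmogorov's (or Doob's) maximal inequality to the martingale $\sum D_k'$ along a geometric subsequence $n_i$, combined with Borel--Cantelli to fill the gaps.

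The two remaining pieces are handled through the tail of the offspring law. The large-offspring part $D_k'''\ge 0$ is nonzero only when some particle of generation $k$ has more than $\Pi_k^{\theta}$ children; a weighted Markov bound gives $\P(D_k'''\ne 0\mid\mathscr I_k)\le\Pi_k\,\P_\xi(\widehat N_k>\Pi_k^{\theta})$, which after annealing is of order $k^{-(1+\lambda)}$, so by Borel--Cantelli $D_k'''=0$ for all large $k$ a.s. and the finitely many surviving terms do not affect the rate. For the predictable correction, $\E_\xi|D_k''|\le m_k^{-1}\E_\xi[\widehat N_k\mathbf 1_{\{\widehat N_k>\Pi_k^{\theta}\}}]$, and the same weighted Markov estimate---now matching exactly the $1/m_0$ weight in \eqref{EEB-eq2.1} supplied by the normalization $\Pi_{k+1}=\Pi_k m_k$---yields $\E|D_k''|\le C k^{-(1+\lambda)}$, whence $\E[n^{\lambda}\sum_{k\ge n}|D_k''|]\le C n^{\lambda}\sum_{k\ge n}k^{-(1+\lambda)}\to 0$ and so $n^{\lambda}\sum_{k\ge n}D_k''\to 0$ a.s.; the surviving series converges precisely because of the margin between the exponent $1+\lambda$ and the weight $\lambda$. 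Throughout, the bound of subtree populations by $W^{*}$ and Lemma \ref{lem5} keep all prefactors integrable. Assembling the three parts gives $n^{\lambda}(W-W_n)\to 0$, that is, $W-W_n=o(n^{-\lambda})$.

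The main obstacle is that \eqref{EEB-eq2.1} furnishes only a logarithmic moment, so $(W_n)$ need not be bounded in any $L^{p}$ with $p>1$; a single second-moment estimate gives only the $L^{2}$ rate $n^{-\lambda/2}$, far short of the almost sure rate $o(n^{-\lambda})$. The real work is therefore the balancing of the truncation level $\Pi_k^{\theta}$---low enough that the centered part $D_k'$ gains a genuine geometric factor $\Pi_k^{\theta-1}$, yet high enough that the discarded mass in $D_k''$ and $D_k'''$ stays of order $k^{-(1+\lambda)}$---together with the subsequence-plus-maximal-inequality scheme needed to promote these borderline ($\sum k^{-1}$-type) weighted bounds into the sharp almost sure rate.
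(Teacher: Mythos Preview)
The paper does not prove this lemma; it is quoted from \cite{HuangLiu}. So there is no ``paper's proof'' to compare against, but the closely related argument for $N_{\nu,n}$ in Section~\ref{EEB-sec5} shows what kind of truncation actually works. Against that template, your outline has two concrete gaps.

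\textbf{(i) The Borel--Cantelli step for $D_k'''$ fails with your truncation level.} With threshold $\Pi_k^{\theta}$, $\theta\in(0,1)$, the union bound over the $|\mathbb T_k|$ particles gives
\[
\P_\xi\bigl(D_k'''\neq 0 \,\big|\, \mathscr D_k\bigr)\ \le\ |\mathbb T_k|\,\P_\xi\bigl(\widehat N_k>\Pi_k^{\theta}\bigr),
\]
and after taking expectation and using the weighted Markov inequality you invoke, the bound is of order $\Pi_k\cdot \Pi_k^{-\theta}(\theta\ln\Pi_k)^{-(1+\lambda)}\asymp \Pi_k^{\,1-\theta}k^{-(1+\lambda)}$, not $k^{-(1+\lambda)}$. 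Since $\Pi_k$ grows geometrically, this diverges for every $\theta<1$ and Borel--Cantelli does not apply. The only way the factor $|\mathbb T_k|\sim\Pi_k$ is cancelled is to truncate at a level of order $\Pi_k$ (compare the level $\Pi_n/n^{\lambda_\delta}$ used in Section~\ref{EEB-sec5}); but then your variance bound for $D_k'$ loses its ``genuine geometric factor $\Pi_k^{\theta-1}$'' and the argument has to be reorganised.

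\textbf{(ii) The $L^1$ step for $D_k''$ does not yield the rate.} Your bound $\E|D_k''|\le C\,k^{-(1+\lambda)}$ is fine, but then
\[
n^{\lambda}\sum_{k\ge n}k^{-(1+\lambda)}\ \sim\ n^{\lambda}\cdot\frac{n^{-\lambda}}{\lambda}\ =\ \frac{1}{\lambda},
\]
so $\E\bigl[n^{\lambda}\sum_{k\ge n}|D_k''|\bigr]$ does \emph{not} tend to $0$; it stays bounded away from $0$. There is no ``margin between the exponent $1+\lambda$ and the weight $\lambda$'': the product $k^{\lambda}\cdot k^{-(1+\lambda)}=k^{-1}$ sits exactly on the harmonic threshold. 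Even if the expectation did tend to $0$, that would give only convergence in probability, not a.s.

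A workable repair follows Section~\ref{EEB-sec5}: truncate at level $\sim\Pi_n$, show that the series $\sum_n n^{\lambda}D_n$ converges a.s.\ (Borel--Cantelli now does kill the untruncated tail, and the centered truncated piece is handled as an $L^2$-bounded martingale), and then apply Asmussen's Lemma~\ref{EEB-lem4} to convert this into $\sum_{k\ge n}D_k=o(n^{-\lambda})$. Note also that your appeal to Lemma~\ref{lem5} (control of $W^*$) is not available here: that lemma requires $\E m_0^{-\delta}<\infty$, which is not part of \eqref{EEB-eq2.1}.
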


Similarly,   \eqref{EEBeq3-9},\eqref{EEBeq3-10},   \eqref{EEBeq3-11}  follow from  Theorem \ref{EEB-prop}  and the following results:
 \begin{eqnarray}
       % \nonumber to remove numbering (before each equation)
           & &   {n}  \frac{1}{\Pi_{k_n}} \sum_{u\in \T_{k_n}} S_u \mathbf{1}_{\{|S_u|>k_n\}} \xrightarrow{n\rightarrow \infty} 0 ~ \mbox{a.s.};  \\
          & &  {n}^{1/2}  \frac{1}{\Pi_{k_n}} \sum_{u\in \T_{k_n}} (S_u^2-s_{k_n}^2) \mathbf{1}_{\{|S_u|>k_n\}} \xrightarrow{n\rightarrow \infty} 0 ~ \mbox{a.s.}, \\
          &&    \frac{1}{\Pi_{k_n}} \sum_{u\in \T_{k_n}} (S_u^3-3S_us_{k_n}^2- s_{k_n}^{(3)}) \mathbf{1}_{\{|S_u|>k_n\}} \xrightarrow{n\rightarrow \infty} 0 ~ \mbox{a.s.},
       \end{eqnarray}
which can be easily proved by following the lines of the proof of  \eqref{EEB-eq3.13}.
%, and we omit the details.
\end{proof}

\section{Convergence rates of the relevant martingales   }\label{EEB-sec5}
In this section, we shall prove Theorem \ref{EEB-prop}.
Recall that we assume throughout the article that $l_n=0$.  Then the martingales  reduce to  the following simplified versions:
\begin{eqnarray*}
% \nonumber to remove numbering (before each equation)
    & &N_{1,n} = \frac{1}{\Pi_n} \sum_{u\in \T_n}  S_u ;   \\
    & & N_{2,n} = \frac{1}{\Pi_n} \sum_{u\in \T_n} \Big( S_u ^2 -s_n^2 \Big);\\
    && N_{3,n} = \frac{1}{\Pi_n} \sum_{u\in \T_n} \Big(S_u^3 -3S_u  s_n^2-  s_n^{(3)}   \Big).
\end{eqnarray*}
It is easy to verify that they are martingales with respect to the filtration $\D_n$, and we omit the details (see \cite{GL14}).

We shall only offer the detailed proof of part (3),  as parts (1) and (2)  will follow  by the same way with minor changes.

   The proof is adapted from Asmussen(1976, \cite{Asmussen1976AOP}).   The key idea is to find a proper truncation to show the convergence of the series $\sum_{n} a_n (N_{3,n+1} -N_{3,n})$ with  suitable $a_n$, which gives the information on the convergence rate of $\sum_{n=\kappa}^{\infty} N_{3,n} $.    The proof relies on the following lemma.
\begin{lem}[\cite{Asmussen1976AOP}, Lemma 2] \label{EEB-lem4}
   Let $ \{\alpha_n, \beta_n, n\geq 1  \} $ be sequences of real numbers.
  If $0<\alpha_n\nearrow \infty, $ and the series $\sum_{n=1}^\infty  \alpha_n \beta_n$ converges, then
  \begin{equation*}
     \sum_{n=\kappa}^\infty \beta_n= o\Big(\frac{1}{\alpha_\kappa}\Big).
  \end{equation*}
\end{lem}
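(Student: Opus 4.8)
The plan is to prove this as a tail version of Kronecker's lemma, using summation by parts. First I would pass to the convergent tails of the given series: setting $a_n = \alpha_n \beta_n$ and $T_n = \sum_{k=n}^\infty a_k$, the hypothesis that $\sum_{n=1}^\infty \alpha_n \beta_n$ converges guarantees that $T_n$ is well defined and that $T_n \to 0$ as $n \to \infty$. Since $\beta_n = a_n/\alpha_n = (T_n - T_{n+1})/\alpha_n$, the quantity to control becomes $\sum_{n=\kappa}^\infty \beta_n = \sum_{n=\kappa}^\infty (T_n - T_{n+1})/\alpha_n$, which reduces the statement to showing that $\alpha_\kappa \sum_{n=\kappa}^\infty \beta_n \to 0$.

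Next I would apply summation by parts to the truncated sum $\sum_{n=\kappa}^N (T_n - T_{n+1})/\alpha_n$: regrouping the terms by the factor $T_n$ and shifting the summation index gives
\[
\sum_{n=\kappa}^{N} \frac{T_n - T_{n+1}}{\alpha_n} = \frac{T_\kappa}{\alpha_\kappa} - \frac{T_{N+1}}{\alpha_N} + \sum_{n=\kappa+1}^{N} T_n\Big(\frac{1}{\alpha_n} - \frac{1}{\alpha_{n-1}}\Big).
\]
Because $\alpha_N \nearrow \infty$ and $T_{N+1} \to 0$, the boundary term $T_{N+1}/\alpha_N$ vanishes as $N \to \infty$; and since $(T_n)$ is bounded while $\sum_n |1/\alpha_n - 1/\alpha_{n-1}|$ telescopes to $1/\alpha_\kappa < \infty$ by monotonicity, the remaining sum converges absolutely. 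Letting $N \to \infty$ therefore establishes both the convergence of $\sum_{n=\kappa}^\infty \beta_n$ and the identity
\[
\sum_{n=\kappa}^{\infty} \beta_n = \frac{T_\kappa}{\alpha_\kappa} + \sum_{n=\kappa+1}^{\infty} T_n\Big(\frac{1}{\alpha_n} - \frac{1}{\alpha_{n-1}}\Big).
\]

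Finally I would multiply through by $\alpha_\kappa$ and let $\kappa \to \infty$. The first contribution is exactly $T_\kappa$, which tends to $0$. For the second, I fix $\varepsilon > 0$ and choose $M$ with $|T_n| < \varepsilon$ for all $n \geq M$; then for $\kappa \geq M$, using $1/\alpha_{n-1} - 1/\alpha_n \geq 0$ and the telescoping identity $\sum_{n=\kappa+1}^\infty (1/\alpha_{n-1} - 1/\alpha_n) = 1/\alpha_\kappa$ (valid since $1/\alpha_n \to 0$), I obtain
\[
\alpha_\kappa \Big| \sum_{n=\kappa+1}^{\infty} T_n\Big(\frac{1}{\alpha_n} - \frac{1}{\alpha_{n-1}}\Big) \Big| \leq \varepsilon\, \alpha_\kappa \sum_{n=\kappa+1}^{\infty} \Big(\frac{1}{\alpha_{n-1}} - \frac{1}{\alpha_n}\Big) = \varepsilon\, \alpha_\kappa \cdot \frac{1}{\alpha_\kappa} = \varepsilon.
\]
Hence $\limsup_{\kappa\to\infty} \alpha_\kappa \big|\sum_{n=\kappa}^\infty \beta_n\big| \leq \varepsilon$ for every $\varepsilon > 0$, giving $\alpha_\kappa \sum_{n=\kappa}^\infty \beta_n \to 0$, i.e. $\sum_{n=\kappa}^\infty \beta_n = o(1/\alpha_\kappa)$. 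The only genuinely delicate point is the exact telescoping that produces the factor $1/\alpha_\kappa$ cancelling the prefactor $\alpha_\kappa$; everything else is routine bookkeeping, so I expect no serious obstacle, this being the standard Abel-summation argument for Kronecker's lemma adapted to the tail formulation stated here.
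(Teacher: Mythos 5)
Your proof is correct and complete: the summation-by-parts identity, the vanishing boundary term $T_{N+1}/\alpha_N$, the telescoping of $\sum_{n\ge\kappa+1}(1/\alpha_{n-1}-1/\alpha_n)$ to exactly $1/\alpha_\kappa$, and the final $\varepsilon$-argument are all sound, and the hypotheses $0<\alpha_n\nearrow\infty$ are used exactly where needed (positivity and monotonicity for the sign of the differences, divergence for the telescoped limit). The paper itself gives no proof of this lemma, citing it as Lemma 2 of Asmussen (1976), and your Abel-summation argument on the tails $T_n=\sum_{k\ge n}\alpha_k\beta_k$ is precisely the standard proof of that result (a tail formulation of Kronecker's lemma), so there is nothing to bridge between your argument and the paper.
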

\begin{proof}[Proof of Part (3) in Theorem \ref{EEB-prop}]
We  begin by introducing some notation:
\begin{eqnarray*}
% \nonumber to remove numbering (before each equation)
    & & \lambda_\delta= \lambda-3-\delta, \quad  X_u=S_u^3- 3S_us_n^2-s_{n}^{(3)}\qquad  \mbox{  for   }  u\in \T_n,  \\
    & &    \textsc{I}_n= N_{3,n+1}-N_{3,n} = \frac{1}{\Pi_n} \sum_{u\in \T_n}\bigg ( \frac{1}{m_n} \sum_{i=1}^{N_u} X_{ui}   -X_u\bigg) ,
      \\  &&   \textsc{I}_n' =  \frac{1}{\Pi_n} \sum_{u\in \T_n}\bigg ( \frac{1}{m_n} \sum_{i=1}^{N_u} X_{ui}   -X_u\bigg) \mathbf{1}_{\{N_u\leq  \Pi_n/n^{\lambda_\delta} \}} .
\end{eqnarray*}

If we can prove that the series
 \begin{equation}\label{EEBeq4.1}
    \sum_{n=1}^\infty n^{\lambda_\delta} \textsc{I}_n     \quad \mbox{ converges a.s. }
 \end{equation}
then  by setting  $V_3=  \sum_{n=1}^\infty \textsc{I}_n + \textsc{I}_1$    and using Lemma \ref{EEB-lem4}, we obtain  the desired conclusion.

We  prove  \eqref{EEBeq4.1}  by showing the following three series converge:
\begin{equation}\label{EEBeq4.2}
 \sum_{n=1}^\infty n^{\lambda_\delta} (\textsc{I}_n-\textsc{I}_n'
 ), \quad   \sum_{n=1}^\infty n^{\lambda_\delta} (\textsc{I}_n'- \E_{\xi,n}\textsc{I}_n'
 ), \quad \sum_{n=1}^\infty n^{\lambda_\delta}  \E_{\xi,n}\textsc{I}_n'.
\end{equation}
By  using an inequality  for moment of sums of independent random variables with mean zero,  it is easy to see that  for $  \widehat{S}_n=\sum_{j=0}^{n-1}\widehat{L}_j$,
\begin{equation}\label{EEBeq4.3}
    \E_\xi |\widehat{S}_n|^r \leq n^{\frac{r}{2}-1}   \sum_{j=0}^{n-1} \E_\xi |\widehat{L}_j|^r \leq K_\xi n^{\frac{r}{2}};
\end{equation} whence for $|u|=n$,
\begin{equation}\label{EEBeq4.4}
 \E_\xi | X_u| \leq  K_\xi n^{3/2};   \quad \E_\xi | X_u|^2 \leq  K_\xi n^3.
\end{equation}

For the first series in  \eqref{EEBeq4.2},   we observe that
\begin{eqnarray*}
% \nonumber to remove numbering (before each equation)
  &\E_\xi|I_n-I_n'|&\leq \frac{1}{\Pi_n} \E_\xi \sum_{u\in\T_n}  \Bigg| \frac{1}{m_n} \sum_{i=1}^{N_u} X_{ui}   -X_u\Bigg| \mathbf{1}_{\{ {N_u}/{m_n}>  n^{-\lambda_\delta}\Pi_n \}}   \\
  &&\leq \frac{K_\xi n^3}{\Pi_n} \E_\xi \sum_{u\in\T_n}   (N_u/m_n +1)  \mathbf{1}_{\{{N_u}/{m_n}>  n^{-\lambda_\delta}\Pi_n \}}  \\
    &&=  K_\xi n^3 \E_\xi (\widehat{N}_n /m_n+1)  \mathbf{1}_{\{{\widehat{N}_n}/{m_n} >n^{-\lambda_\delta}\Pi_n\}}
    \\ &&\leq K_\xi n^3  \frac{1}{ \ln^{1+\lambda}
    ( \Pi_n/n^{\lambda_\delta}) }\E_\xi (\widehat{N}_n /m_n+1)  \ln^{1+\lambda} \big (\widehat{N}_n/m_n\big)
    \\ & &\leq_\eqref{cbrweq4.9}K_\xi n^{2-\lambda} \E_\xi (\widehat{N}_n /m_n+1)  \Big(\ln^+\widehat{N}_n\Big)^{1+\lambda} +    K_\xi n^{2-\lambda}    (\ln^- m_n)^{1+\lambda}
    \mbox{ a.s. }
\end{eqnarray*}
We see that
\begin{align*}
                                      & \E \sum_{n=1}^\infty {n^{\lambda_\delta+2-\lambda} } \bigg[ \E_\xi (\widehat{N}_n /m_n+1)  \Big(\ln^+\widehat{N}_n\Big)^{1+\lambda} + (\ln^- m_n)^{1+\lambda}\bigg ]
                                   \\ = &   \sum_{n=1}^\infty {n^{-1-\delta} } \bigg[ \E(\widehat{N}_0 /m_0+1)  \Big(\ln^+\widehat{N}_0\Big)^{1+\lambda} + \E(\ln^- m_0)^{1+\lambda}\bigg ]   <\infty,
                                  \end{align*}
which implies that
\begin{equation}\label{EEBeq4.5}
  \sum_{n=1}^\infty {n^{\lambda_\delta+2-\lambda} }\bigg[\E_\xi (\widehat{N}_n /m_n+1)  \Big(\ln^+\widehat{N}_n\Big)^{1+\lambda}+ (\ln^- m_n)^{1+\lambda}\bigg ] <\infty \quad   a.s.
\end{equation}
Thus
\begin{align*}
    &  \E_{\xi} \bigg|\sum_{n=1}^\infty n^{\lambda_\delta} (\textsc{I}_{ n}-\textsc{I}_{n}') \bigg | \leq \sum_{n=1}^\infty n^{\lambda_\delta}\E_{\xi}|\textsc{I}_{n}-\textsc{I}_{n}'| <\infty,  \\
   &      \E_{\xi}   \bigg| \sum_{n=1}^\infty   n^{\lambda_\delta}\E_{\xi,n} \textsc{I}_{n}'  \bigg|=\E_{\xi}  \bigg | \sum_{n=1}^\infty  n^{\lambda_\delta} \E_{\xi,n}  (\textsc{I}_{n}- \textsc{I}_{n}')  \bigg| \leq  \sum_{n=1}^\infty n^{\lambda_\delta} \E_{\xi}|\textsc{I}_{n}-\textsc{I}_{n}'| <\infty.
\end{align*}
It follows that  the series $\sum_{n=1}^\infty n^{\lambda_\delta} (\textsc{I}_{ n}-\textsc{I}_{ n}') $  and $\sum_{n=1}^\infty   n^{\lambda_\delta}\E_{\xi,n} \textsc{I}_{ n}' $ converge a.s.

It remains to prove the a.s. convergence of $\sum_{n=1}^\infty n^{\lambda_\delta} (\textsc{I}_n'- \E_{\xi,n}\textsc{I}_n'
 ) $.
By using the fact  that $\sum_{k=1}^{n} k^{\lambda_\delta} (\textsc{I}_k'- \E_{\xi,k}\textsc{I}_k'
 )    $  is a martingale with respect to $\{\D_{n+1}\}$ and by  the a.s. convergence of an  $L^2$ bounded martingale (see e.g. \cite[P. 251, Ex. 4.9]{Durrett96Proba}),  we  only need to show
that   the series
$$ \sum_{n=1}^\infty   n^{2\lambda_\delta} \E_\xi(\textsc{I}_n'- \E_{\xi,n}\textsc{I}_n'
 )^2  \quad   \mbox { converges a.s.}$$
To this end,  we first note that
\begin{eqnarray*}
% \nonumber to remove numbering (before each equation)
    &&\qquad   \E_{\xi}\Bigg[\bigg ( \frac{1}{m_n} \sum_{i=1}^{N_u} X_{ui}   -X_u\bigg) ^2 \mathbf{1}_{\{ {N_u}/{m_n}\leq  n^{-\lambda_\delta}\Pi_n \}} \Bigg| \F_n\Bigg]
    \\&&=  \E_\xi \Bigg\{ \E_\xi \bigg[  \bigg ( \frac{1}{m_n} \sum_{i=1}^{N_u} X_{ui}   -X_u\bigg) ^2  \bigg|N_u\bigg]\mathbf{1}_{\{ {N_u}/{m_n}\leq  n^{-\lambda_\delta}\Pi_n \}} \Bigg\}
    \\ &&\leq\E_\xi \Bigg\{ 2  \bigg( N_u \sum_{i=1}^{N_u} \frac{\E_\xi X_{ui}^2}{{m_n^2}} + \E_\xi X_u^2\bigg)  \mathbf{1}_{\{ {N_u}/{m_n}\leq  n^{-\lambda_\delta}\Pi_n \}} \Bigg\}
    \\ &&\leq_{\eqref{EEBeq4.4}} K_\xi n^3\bigg(\E_\xi  \frac{N_u^2}{m_n^2}\mathbf{1}_{\{ {N_u}/{m_n}\leq  n^{-\lambda_\delta}\Pi_n \}} +1 \bigg )
     \\ &&= K_\xi n^3\bigg(\E_\xi  \frac{\widehat{N}_n^2}{m_n^2}\mathbf{1}_{\{ {\widehat{N}_n}/{m_n}\leq  n^{-\lambda_\delta}\Pi_n \}} +1 \bigg ).
\end{eqnarray*}
We next observe that
\begin{eqnarray*}
% \nonumber to remove numbering (before each equation)
   && \quad n^{2\lambda_\delta}\E_\xi(\textsc{I}_n'- \E_{\xi,n}\textsc{I}_n'
 )^2 \\
 &&=  \frac{n^{2\lambda_\delta}}{\Pi_n^2} \E_\xi  \sum_{u\in\T_n}   \E_{\xi,n}\Bigg[\bigg ( \frac{1}{m_n} \sum_{i=1}^{N_u} X_{ui}   -X_u\bigg)  \mathbf{1}_{\{ {N_u}/{m_n}\leq  n^{-\lambda_\delta}\Pi_n \}}
\\  && \qquad  \qquad \qquad \qquad \qquad-      \E_{\xi,n}  \bigg ( \frac{1}{m_n} \sum_{i=1}^{N_u} X_{ui}   -X_u\bigg)  \mathbf{1}_{\{ {N_u}/{m_n}\leq  n^{-\lambda_\delta}\Pi_n \}}  \Bigg]^2\\
    &&\leq \frac{n^{2\lambda_\delta}}{\Pi_n^2} \E_\xi  \sum_{u\in\T_n}  \E_{\xi,n}\Bigg[\bigg ( \frac{1}{m_n} \sum_{i=1}^{N_u} X_{ui}   -X_u\bigg)  \mathbf{1}_{\{ {N_u}/{m_n}\leq  n^{-\lambda_\delta}\Pi_n\}} \Bigg]^2 \\
    &&=    \frac{n^{2\lambda_\delta}}{\Pi_n^2} \E_\xi  \sum_{u\in\T_n}  \E_{\xi}\Bigg[\bigg ( \frac{1}{m_n} \sum_{i=1}^{N_u} X_{ui}   -X_u\bigg) ^2 \mathbf{1}_{\{ {N_u}/{m_n}\leq  n^{-\lambda_\delta}\Pi_n \}}  \Bigg| \F_n\Bigg]
    %\\
%    &&=    \frac{n^{2\lambda_\delta}}{\Pi_n^2} \E_\xi  \sum_{u\in\T_n}  \E_{\xi}\Bigg[\bigg ( \frac{1}{m_n} \sum_{i=1}^{N_u} X_{ui}   -X_u\bigg) ^2 \mathbf{1}_{\{ {N_u}/{m_n}\leq  n^{-\lambda_\delta}\Pi_n \}}  \Bigg]
 %\\&&=\frac{n^{2\lambda_\delta}}{\Pi_n^2} \E_\xi  \sum_{u\in\T_n}
%\E_{\xi}\Bigg[\bigg ( \frac{1}{m_n} \sum_{i=1}^{N_u} X_{ui}   -X_u\bigg) ^2 \Bigg|  N_u\Bigg] \mathbf{1}_{\{ {N_u}/{m_n}\leq  n^{-\lambda_\delta}\Pi_n \}}
%\\ &&\leq\frac{2n^{2\lambda_\delta}}{\Pi_n^2} \E_\xi\sum_{u\in\T_n}
%\Bigg[  \bigg ( N_u\sum_{i=1}^{N_u} \frac{\E_{\xi} X_{ui}^2}{m_n^2} +  \E_{\xi} X_u^2\bigg)  \Bigg] \mathbf{1}_{\{ {N_u}/{m_n}\leq  n^{-\lambda_\delta}\Pi_n \}}
%\\ &&\leq
 %   \\ &&=  \frac{n^{2\lambda_\delta}}{\Pi_{n+1}^2} \E_\xi \sum_{u\in\T_n}  \E_{\xi}\Bigg[ \bigg ( \sum_{i=1}^{N_u} X_{ui}   -m_nX_u\bigg) ^2\Bigg| N_u\Bigg]\mathbf{1}_{\{ {N_u}/{m_n}\leq  n^{-\lambda_\delta}\Pi_n \}}
  %    \\ &&\leq\frac{n^{2\lambda_\delta}}{\Pi_{n+1}^2} \E_\xi \sum_{u\in\T_n}  \E_{\xi} (N_u+1)  \Big(\sum_{i=1}^{N_u} \E_\xi X_{ui}^2+ m_n^2\E_{\xi}X_u^2    \Big)\mathbf{1}_{\{ {N_u}/{m_n}\leq  n^{-\lambda_\delta}\Pi_n\}}
\\ &&\leq \frac{K_\xi n^{3+2\lambda_\delta}}{\Pi_{n} }  \E_\xi \frac{\widehat{N}_n^2}{m_n^2}\mathbf{1}_{\{ \widehat{N}_n/{m_n}\leq  n^{-\lambda_\delta}\Pi_n \}} +\frac{K_\xi n^{3+2\lambda_\delta}}{\Pi_{n} }
\\ &&=\frac{K_\xi n^{3+2\lambda_\delta}}{\Pi_{n} }  \E_\xi \frac{\widehat{N}_n^2}{m_n^2}\Big( \mathbf{1}_{\{  {\widehat{N}_n}  /m_n\leq  e^{2\lambda} \}}+ \mathbf{1}_{\{ e^{2\lambda}<  {\widehat{N}_n}/m_n  \leq  n^{-\lambda_\delta}\Pi_n \}} \Big) +\frac{K_\xi n^{3+2\lambda_\delta}}{\Pi_{n} }
\\ &&  \leq  \frac{K_\xi n^{3+2\lambda_\delta}}{\Pi_{n} }  \E_\xi \frac{\widehat{N}_n^2}{m_n^2} \mathbf{1}_{\{ e^{2\lambda}<  {\widehat{N}_n}/m_n  \leq  n^{-\lambda_\delta}\Pi_n \}}+\frac{K_\xi n^{3+2\lambda_\delta}}{\Pi_{n} }\\
&&\leq \frac{K_\xi n^{3+2\lambda_\delta}}{\Pi_{n} }\frac{\Pi_{n}  }{n^{\lambda_\delta}} \bigg(\ln \frac{\Pi_{n}  }{n^{\lambda_\delta}} \bigg)^{-1-\lambda} \E_\xi \frac{\widehat{N}_n^2}{m_n^2} \bigg[\frac{\widehat{N}_n}{m_n} \bigg(\ln^+ \frac{\widehat{N}_n}{ m_n}\bigg)^{-1-\lambda} \bigg]^{-1}+\frac{K_\xi n^{3+2\lambda_\delta}}{\Pi_{n} }
\\ &&
\quad (\mbox{ because }  x(\ln x)^{-1-\lambda} \mbox{ is increasing for }  x > e^{2\lambda})
\\ && \leq  K_\xi   n^{2+\lambda_\delta-\lambda}\bigg (\E_\xi \frac{\widehat{N}_n}{m_n}\Big(\ln^+\widehat{N}_n\Big)^{1+\lambda} +(\ln^- m_n)^{1+\lambda}  \bigg)+\frac{K_\xi n^{3+2\lambda_\delta}}{\Pi_{n} }.
\end{eqnarray*}
By the above estimates  and  \eqref{EEBeq4.5}, we see that   the series $\sum_{n=1}^\infty   n^{2\lambda_\delta} \E_\xi(\textsc{I}_n'- \E_{\xi,n}\textsc{I}_n'
 )^2$  converges a.s.

So we have proved the three series in \eqref{EEBeq4.2}  converges a.s. and hence \eqref{EEBeq4.1} holds. By setting $V_3=  \sum_{n=1}^\infty \textsc{I}_n + {N}_{3,1}$,
we have $N_{3,n}-V_3= \sum_{j=n}^\infty  \textsc{I}_j$, and hence part (3) of  the lemma follows from Lemma \ref{EEB-lem4} and \eqref{EEBeq4.1}.

\end{proof}

%\bibliographystyle{plain}%%{amsplain}{plainnat}{unsrt}
%\bibliography{Gao_BRW_bib}
\section*{Acknowledgements}
We are grateful to the editor in charge of the article and to two anonymous referees for their very valuable comments, remarks or suggestions, which significantly contributed to improving the quality of the article. The work has been partially supported
by the National Natural Science Foundation of China (Grants No. 11101039, No. 11271045, No. 11571052, No. 11401590), by the Fundamental Research Funds for the Central Universities (China), and by the Natural Science Foundation of Guangdong Province (China), Grant no. 2015A030313628.
Q. Liu has also benefited from a delegation CNRS (France) during the preparation of the article.

\bibliographystyle{imsart-number}
\bibliography{Gao_20160216}

\end{document}